\newtheorem{theorem}{Theorem}[section]
\newtheorem{corollary}[theorem]{Corollary}
\newtheorem{lemma}[theorem]{Lemma}
\newtheorem{proposition}[theorem]{Proposition}
\newtheorem{definition}[theorem]{Definition}
\newtheorem{example}[theorem]{Example}
\newtheorem{claim}[theorem]{Claim}
\newcommand\oast{\stackMath\mathbin{\stackinset{c}{0ex}{c}{0ex}{\ast}{\bigcirc}}}
\DeclareMathOperator*{\esssup}{ess\,sup}
\newcommand{\Addresses}{{
    \bigskip
    \footnotesize

    Franz Luef, \textsc{Department of Mathematical Sciences, Norwegian University of Science and Technology, 7034
    Trondheim, Norway}\par\nopagebreak
    \textit{E-mail address}: \texttt{franz.luef@ntnu.no}
    
    \medskip
    
    Henry McNulty, \textsc{Cognite AS, 1366 Lysaker, Norway}\par\nopagebreak
    \textsc{Department of Mathematical Sciences, Norwegian University of Science and Technology, 7491
    Trondheim, Norway}\par\nopagebreak
    \textit{E-mail address}: \texttt{henry.mcnulty@cognite.com}

}}
\title{Quantum Time-Frequency Analysis and Pseudodifferential Operators}
\author{
  Franz Luef \and Henry McNulty 
  }
\begin{document}

\maketitle
\begin{abstract}
    We introduce Quantum Time--Frequency Analysis, which expands the approach of Quantum Harmonic Analysis to include modulations of operators in addition to translations. This is done by a projective representation of double-phase space, and we consider the associated matrix coefficients and integrated representation. This leads to the polarised Cohen's class, which is an isomorphism from Hilbert-Schmidt operators to a reproducing kernel Hilbert space, and has orthogonality relations similar to many objects in classical time--frequency analysis. By considering a class of windows for the polarised Cohen's class that is smaller than the class of Hilbert-Schmidt operators, then we find spaces of modulation spaces of operators, and we consider the properties of these spaces, including discretisation results and mapping properties between function modulation spaces. We also compare modulation spaces of operators to known symbol classes for pseudodifferential operators. In many cases, using rank--one examples of operators, we recover familiar objects and results from classical time--frequency analysis and the theory of pseudodifferential operators.
\end{abstract}

\section{Introduction}
Quantum Harmonic Analysis (QHA), introduced by Werner in 1984 \cite{werner84}, extends the tools of harmonic analysis, convolutions and Fourier transforms, to operators and functions on phase space, where the translation of an operator $S$ is defined by $\alpha_z(S)=\pi(z)S\pi(z)^*$ and $\pi(z)$ denotes the Schr\"odinger representation of $\mathbb{R}^{2d}$. 

Recently these tools have been recognised as valuable instruments to understand objects in time-frequency analysis, and have motivated many new developments in the field \cite{skrett18} \cite{skrett19} \cite{skrett20}, as well as generalisations to other locally compact group representations \cite{berge22} \cite{halv23} \cite{fulsche23}, and applications to the analysis of functional data sets \cite{dorf21} \cite{dorf22}. As one finds in classical harmonic analysis, the translations of operators $\alpha_z$ in QHA form a commutative representation of phase space, and many questions concerning operators may be reduced to questions on their Weyl symbols.
\paragraph{}
Motivated by the efficacy of time-frequency analysis of functions, in this work we extend our perspective to that of Quantum Time-Frequency Analysis. While classical time-frequency analysis occurs on phase space, the time-frequency analysis of operators (themselves identified via their Weyl symbol with phase space) requires lifting to double phase space. To do so, we consider a projective representation of double-phase space acting on the space of Hilbert-Schmidt operators,
\begin{align*}
    \gamma_{w,z} (S) := \pi(z)S\pi(w)^*.
\end{align*}
which amounts to a combination of translation and modulation of operators, understood on the symbol level, motivating our nomenclature. While $\alpha_z$ shifts are related to the \textit{covariant} symbol in the work \cite{Be75}, $\gamma_{w,z}$ shifts are related to the \textit{contravariant} symbol. In particular, we find:
\begin{theorem}
    Let $w=(w_1,w_2),z=(z_1,z_2)\in\mathbb{R}^{2d}$, and $S\in\mathcal{HS}$. Then
    \begin{align*}
        \sigma_{\gamma_{w,z}(S)} = e^{i\pi(w_1 + z_1)(w_2 - z_2)}\pi(U(w,z))\sigma_S,
    \end{align*}
    where
    \begin{align*}
        U(w,z) &= U(w_1,w_2,z_1,z_2) := \Big(\frac{w_1+z_1}{2},\frac{w_2+z_2}{2}, w_2 - z_2, z_1-w_1\Big).
    \end{align*}
\end{theorem}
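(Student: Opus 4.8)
The plan is to reduce everything to the integral kernel of the operator and then invoke the definition of the Weyl symbol, since both the left and right actions of the Schr\"odinger representation act on a kernel $k_S(x,y)$ in a completely explicit way. Writing $\pi(z_1,z_2)=M_{z_2}T_{z_1}$ (a modulation followed by a translation), a short computation gives that the kernel of $\gamma_{w,z}(S)=\pi(z)S\pi(w)^*$ is
\begin{align*}
  k_{\gamma_{w,z}(S)}(x,y) = e^{2\pi i z_2 x}\,e^{-2\pi i w_2 y}\,k_S(x-z_1,\,y-w_1),
\end{align*}
so that the left factor $\pi(z)$ contributes the shift $x\mapsto x-z_1$ together with the modulation $e^{2\pi i z_2 x}$, and the right factor $\pi(w)^*$ contributes $y\mapsto y-w_1$ together with $e^{-2\pi i w_2 y}$. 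All of the data of the theorem is already latent in this single formula.

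First I would substitute this kernel into the definition $\sigma_{\gamma_{w,z}(S)}(X_1,X_2)=\int k_{\gamma_{w,z}(S)}\big(X_1+\tfrac t2,\,X_1-\tfrac t2\big)e^{-2\pi i X_2 t}\,dt$ and pass to the center--difference coordinates $c=\tfrac{x+y}{2}$, $s=x-y$ native to the Weyl correspondence. With $x=X_1+t/2-z_1$ and $y=X_1-t/2-w_1$, the center becomes $c=X_1-\tfrac{z_1+w_1}{2}$ and the difference becomes $s=t-(z_1-w_1)$; this is exactly where the half-sum translation $\tfrac{w_1+z_1}{2}$ in the first slot of $U(w,z)$ emerges. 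The modulation factors $e^{2\pi i z_2 x}e^{-2\pi i w_2 y}$, rewritten in $(c,s)$, split into a term linear in $X_1$ (producing the third entry $w_2-z_2$) and a term linear in $s$ (which shifts the Fourier variable $X_2$ by $\tfrac{z_2+w_2}{2}$, giving the second slot of $U$).

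The final change of variables $t=s+(z_1-w_1)$ in the Fourier integral then produces the frequency translation $X_2\mapsto X_2-\tfrac{z_2+w_2}{2}$ together with the fourth entry $z_1-w_1$, and simultaneously throws off the cross terms $e^{\pm i\pi(\cdots)(z_1-w_1)}$ that collapse into the scalar phase. Collecting the surviving factors, the integral reconstitutes $\sigma_S$ evaluated at $\big(X_1-\tfrac{z_1+w_1}{2},\,X_2-\tfrac{z_2+w_2}{2}\big)$, multiplied by a modulation in $X=(X_1,X_2)$ whose frequency vector is $(w_2-z_2,\,z_1-w_1)$ up to the conventional symplectic pairing; this is precisely $\pi(U(w,z))\sigma_S$ up to the global phase $e^{i\pi(w_1+z_1)(w_2-z_2)}$.

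I expect the only genuine difficulty to be bookkeeping: keeping every phase and every sign consistent with the paper's fixed conventions for $\pi$, for the symplectic form, and for the double--phase--space representation $\pi(U)$, so that the linear-in-$X$ exponents assemble exactly into $(w_2-z_2,\,z_1-w_1)$ and the constant exponents collapse to the single term $i\pi(w_1+z_1)(w_2-z_2)$ rather than a superficially different but equal expression. A conceptually cleaner, computationally equivalent cross-check uses the integrated (spreading) representation $S=\int f_S(\zeta)\pi(\zeta)\,d\zeta$: the projective cocycle law $\pi(z)\pi(\zeta)\pi(w)^*=c(z,\zeta,w)\,\pi(z+\zeta-w)$ shows that $\gamma_{w,z}$ merely translates and modulates the spreading function, and transporting this through the symplectic Fourier transform relating $f_S$ and $\sigma_S$ turns the translation of $f_S$ into the modulation $U_2$ of $\sigma_S$ and the modulation of $f_S$ into the translation $U_1$. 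I would keep this alternative on hand to confirm the signs.
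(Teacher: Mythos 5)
Your proposal is correct in substance, but it takes a genuinely different route from the paper's. The paper never touches integral kernels: it proves the theorem by operator algebra alone, inserting factors of $\pi(\tfrac{z}{2})$, $\pi(\tfrac{w}{2})$ and their adjoints so as to factor the projective shift as $\gamma_{w,z}(S)=e^{i\pi(z_1+w_1)(z_2-w_2)}\,\beta_{z-w}\,\alpha_{\frac{w+z}{2}}(S)$, and then reads off the symbol from two structural facts: $\alpha_z$ translates the Weyl symbol, and $\beta_w$ modulates it (\cref{symbolmodulation}, itself proved via the Fourier--Wigner transform). You instead compute from first principles. Your kernel formula $k_{\gamma_{w,z}(S)}(x,y)=e^{2\pi iz_2x}e^{-2\pi iw_2y}k_S(x-z_1,y-w_1)$ is correct (the phase $e^{-2\pi iw_1w_2}$ produced by $\pi(w)^*$ cancels in the substitution $y\mapsto y+w_1$), and pushing it through the Weyl integral with the centre--difference substitution does yield $\sigma_S\big(X-\tfrac{w+z}{2}\big)$ times a modulation and a constant phase, exactly as you outline. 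What the paper's route buys is conceptual transparency --- the identity literally reads ``projective shift $=$ phase $\times$ operator modulation $\times$ operator translation'', which is the message of the section --- and reuse of its structural lemmas; what your route buys is self-containedness (no need for $\beta_w$ or \cref{symbolmodulation}) plus the kernel covariance identity as a by-product, which the paper in fact uses later (it is the source of the permutation $c_2$ in $K_{\pi(w)S\pi(z)^*}=\pi(c_2(w,z))K_S$).

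Your closing caution about signs is not mere prudence; it is where all the difficulty lies, and it cannot be discharged by matching the printed formula, because the paper is not sign-consistent with itself: the phase $e^{i\pi(w_1+z_1)(w_2-z_2)}$ in the statement and the phase $e^{i\pi(z_1+w_1)(z_2-w_2)}$ obtained at the end of the paper's own proof are complex conjugates of one another. For calibration: with the paper's stated conventions ($\pi(z)=M_\omega T_x$, and $\pi(U)$ read with ordinary modulation), your kernel computation produces the modulation exponent $e^{2\pi i[(z_2-w_2)X_1-(z_1-w_1)X_2]}$, i.e.\ frequency vector $(z_2-w_2,\,w_1-z_1)$ --- the negative of the printed third and fourth slots of $U$ --- together with the constant phase $e^{i\pi(z_1-w_1)(z_2+w_2)}$. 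A reliable anchor is the rank-one case with $w=0$, where a two-line computation gives $W(\pi(z)f,g)(X)=e^{i\pi z_1z_2}\,e^{2\pi i(z_2X_1-z_1X_2)}\,W(f,g)(X-\tfrac{z}{2})$, fixing both signs unambiguously. So in the final write-up you should fix one convention for the pairing in $\pi(U)$ and state the resulting entries and phase explicitly, rather than aim to reproduce the printed ones; and for your spreading-function cross-check, note that the clean relation $\sigma_S=\mathcal{F}_\Omega\eta_S$ holds for the symmetric (Weyl) normalisation of the spreading function, so the same convention care is needed there.
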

By considering a rank-one operator, this can be used to show the form of an STFT of a Wigner distribution, used in many results about boundedness of operators on modulation spaces. We see that on the diagonal $w=z$, the time-frequency shift $\gamma_{z,z}$ reduces to a pure translation $\alpha_z$, and in this sense we find quantum harmonic analysis within quantum time-frequency analysis. With this representation, we consider the matrix coefficients and integrated representations, and define these as the Polarised Cohen's class and its adjoint respectively;
\begin{align*}
    Q_S T(w,z) &:= \langle T, \gamma_{w,z}(S)\rangle_{\mathcal{HS}} \\
    Q_S^* F &:= \int_{\mathbb{R}^{4d}} F(w,z) \gamma_{w,z}(S)\, dw\, dz.
\end{align*}
Note that $Q_S(f\otimes f)(z,z)$ is the Cohen class $Q_S(f)(z)$ of the operator $S$ that was introduced in \cite{skrett19}. We show that the polarised Cohen's class behaves in ways one would hope when constructing a time-frequency analysis object; it is isometric from the Hilbert-Schmidt operators into $L^2(\mathbb{R}^{4d})$, it satisfies an orthogonality relation mirroring Moyal's identity, and a resolution of the identity. In particular, it defines a reproducing kernel Hilbert space:
\begin{proposition}
    Let $S\in\mathcal{HS}$ be non-zero. Then For any $T\in\mathcal{HS}$, we have the identity
    \begin{align*}
        Q_S^* Q_S T = T.
    \end{align*}
    Furthermore, this gives rise to the reproducing formula
    \begin{align*}
        Q_S T(w,z) = \int_{\mathbb{R}^4d} Q_S T(w',z')k_{w,z}(w',z')\, dw'\, dz',
    \end{align*}
    where $k_{w,z}(w',z') = \langle \pi(z')S\pi(w')^*,\pi(z)S\pi(w)^*\rangle_{\mathcal{HS}}$.
\end{proposition}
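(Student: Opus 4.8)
The plan is to obtain both assertions as short duality consequences of the orthogonality (Moyal-type) relation for $Q_S$ established above, once it is recognised that $Q_S^*$ is the genuine Hilbert-space adjoint of the analysis map $Q_S \colon \mathcal{HS} \to L^2(\mathbb{R}^{4d})$. Accordingly, I would first confirm the adjoint relation
\[
\langle Q_S T, F\rangle_{L^2(\mathbb{R}^{4d})} = \langle T, Q_S^* F\rangle_{\mathcal{HS}}, \qquad T \in \mathcal{HS},\ F \in L^2(\mathbb{R}^{4d}).
\]
Unfolding the operator-valued integral defining $Q_S^* F$ and interchanging it with the Hilbert--Schmidt inner product via Fubini, the right-hand side becomes $\int_{\mathbb{R}^{4d}} \langle T, \gamma_{w,z}(S)\rangle_{\mathcal{HS}}\, \overline{F(w,z)}\, dw\, dz$, which is exactly $\langle Q_S T, F\rangle_{L^2}$ by the definition of $Q_S$.

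With this in hand, the resolution of the identity follows weakly: for an arbitrary $R \in \mathcal{HS}$,
\[
\langle Q_S^* Q_S T, R\rangle_{\mathcal{HS}} = \langle Q_S T, Q_S R\rangle_{L^2(\mathbb{R}^{4d})} = \langle T, R\rangle_{\mathcal{HS}},
\]
where the first step is the adjoint relation applied to $F = Q_S T$ and the second is the orthogonality relation, which makes $Q_S$ isometric and thereby fixes the normalisation of $S$. Since $R$ ranges over all of $\mathcal{HS}$ and the inner product is non-degenerate, this forces $Q_S^* Q_S T = T$.

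The reproducing formula is then immediate: inserting $T = Q_S^* Q_S T$ into the definition of $Q_S$ and applying the adjoint relation once more, now with $R = \gamma_{w,z}(S)$, gives
\[
Q_S T(w,z) = \langle Q_S^* Q_S T, \gamma_{w,z}(S)\rangle_{\mathcal{HS}} = \langle Q_S T, Q_S \gamma_{w,z}(S)\rangle_{L^2(\mathbb{R}^{4d})}.
\]
Expanding the $L^2$ inner product and conjugating inside identifies $\overline{Q_S \gamma_{w,z}(S)(w',z')} = \langle \gamma_{w',z'}(S), \gamma_{w,z}(S)\rangle_{\mathcal{HS}} = k_{w,z}(w',z')$, which yields precisely the claimed integral. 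I expect the only genuine obstacle to be technical rather than conceptual: making the weakly defined operator-valued integral $Q_S^* F$ precise and justifying the Fubini interchange so that the adjoint identity holds rigorously. Once that is secured, both statements reduce to the already-proven Moyal relation together with a non-degeneracy argument.
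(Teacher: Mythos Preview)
Your argument is correct and in fact cleaner than the paper's. The paper proves $Q_S^* Q_S T = T$ by spectral decomposition: writing $S=\sum_n f_n\otimes g_n$ and $T=\sum_m \psi_m\otimes\phi_m$, it expands $Q_S^*Q_S T$ as a triple sum of tensor products of terms $V_{f_i}^* V_{f_n}\psi_m$ and $V_{g_i}^* V_{g_n}\phi_m$, and then collapses the off-diagonal terms using the orthogonality of the spectral components together with the function-level Moyal identity. You instead leverage the operator-level Moyal identity already proved as the preceding proposition, feeding it through the adjoint relation to get $\langle Q_S^*Q_S T,R\rangle=\langle Q_S T,Q_S R\rangle=\langle T,R\rangle$ for all $R$; this is the standard coorbit/RKHS route and avoids redoing the rank-one reduction. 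Both approaches tacitly require $\|S\|_{\mathcal{HS}}=1$ for the identity to hold without a scalar factor $\|S\|_{\mathcal{HS}}^2$, a point you rightly flag and which the paper's own proof glosses over. Your derivation of the reproducing formula from $T=Q_S^*Q_S T$ via $Q_S T(w,z)=\langle Q_S T,\,Q_S\gamma_{w,z}(S)\rangle_{L^2}$ is likewise correct and matches the kernel the paper records.
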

We are naturally led to the notion of modulation spaces of operators, given by
\begin{align*}
    \mathcal{M}^{p,q}_m := \{T\in\mathfrak{S}^\prime: Q_{\varphi_0\otimes \varphi_0} T\in L^{p,q}_m(\mathbb{R}^{4d})\},
\end{align*}
where $\varphi_0$ is the normalised Gaussian and $\mathfrak{S}^\prime$ are those operators with Weyl symbol in the space of tempered distributions. Of particular interest is the subtle difference between spaces of operators with Weyl symbols in $M^{p,q}_m(\mathbb{R}^{2d})$ spaces, and the operators $\mathcal{M}^{p,q}_m$, and we consider inclusion results between these two classes of spaces. The operator space $\mathcal{M}^{p,q}_m$ is then characterised by the same reproducing property:
\begin{proposition}
    Given $T\in\mathfrak{S}^\prime$;
    \begin{align*}
        T\in\mathcal{M}^{p,q}_m \iff Q_{S_0} T \oast Q_{S_0} S_0 = Q_{S_0} T.
    \end{align*}
\end{proposition}
Here $\oast$ is the appropriate notion of a twisted convolution on double phase space, to be defined.
\paragraph{}
As a result of the operator translations being unitarily equivalent to a translation of the Weyl symbol, it is not possible to construct a discrete frame of translates on a lattice, that is to say, a frame of the form $\{\alpha_{\lambda}(S)\}_{\lambda\in\Lambda}$ for Hilbert-Schmidt operators \cite{skrett20b}. With the time-frequency shifts for operators, we construct frames of the form $\{\gamma_{\lambda,\mu}(S)\}_{(\lambda,\mu)\in\Lambda\times M}$. As one would expect, a privileged role is played by the Feichtinger operators $\mathcal{M}^1$ considered in \cite{feich22} \cite{Bast23}, and given a frame for $\mathcal{HS}$ with window in $\mathcal{M}^1$, the operator modulation spaces $\mathcal{M}^{p,q}_m$ are characterised by $\ell^{p,q}_m$ decay of the polarised Cohen's class on the lattice:
\begin{theorem}
    An operator $T\in \mathcal{M}^{\infty}$ is in the space $\mathcal{M}^{p,q}_m$ if and only if $C_S (T) \in \ell^{p,q}_m$ for some $S\in\mathcal{M}^1_v$ which generates a Gabor frame. 
\end{theorem}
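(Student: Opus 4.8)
The plan is to realise this statement as the discretisation (atomic decomposition) theorem of Feichtinger--Gr\"ochenig coorbit theory, now carried out for the projective representation $\gamma$ on $\mathcal{HS}$, using the continuous reproducing formula $Q_S^* Q_S = \mathrm{Id}$ from the earlier proposition as the point of departure and replacing it by a discrete frame expansion. Throughout I write $C_S(T) = \big(Q_S T(\lambda,\mu)\big)_{(\lambda,\mu)\in\Lambda\times M}$ for the sampling of the polarised Cohen's class; the hypothesis $T\in\mathcal{M}^{\infty}$ plays the role of the ambient reservoir, ensuring that $Q_S T$ is a well-defined continuous function on double phase space so that the sampling makes sense. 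The first reduction is window-independence: the convolution characterisation $Q_{S_0}T \oast Q_{S_0}S_0 = Q_{S_0}T$ yields, upon inserting a second window, a change-of-window identity of the form $Q_S T = Q_{\varphi_0\otimes\varphi_0}T \oast Q_S(\varphi_0\otimes\varphi_0)$. Since $S\in\mathcal{M}^1_v$, the kernel $Q_S(\varphi_0\otimes\varphi_0)$ lies in $L^1_v(\mathbb{R}^{4d})$, and a $v$-weighted Young estimate shows that $Q_{\varphi_0\otimes\varphi_0}T\in L^{p,q}_m$ if and only if $Q_S T\in L^{p,q}_m$. Thus $T\in\mathcal{M}^{p,q}_m$ is equivalent to $Q_S T\in L^{p,q}_m$ for the given Feichtinger window.

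For the forward implication I would upgrade this to a Wiener-amalgam statement. Because $S\in\mathcal{M}^1_v$, the autocorrelation kernel $Q_S S$ belongs to the amalgam $W(C,L^1_v)(\mathbb{R}^{4d})$ of continuous functions with $v$-weighted integrable local suprema. Convolving the reproducing identity $Q_S T = Q_S T \oast Q_S S$ against such a kernel promotes the $L^{p,q}_m$-membership of $Q_S T$ to membership in the mixed amalgam $W(C,L^{p,q}_m)$, whose defining feature is precisely that pointwise sampling is controlled: restricting a function in this amalgam to the lattice $\Lambda\times M$ produces a sequence with $\|C_S(T)\|_{\ell^{p,q}_m}\lesssim \|Q_S T\|_{L^{p,q}_m}$.

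For the converse I would exploit that $\{\gamma_{\lambda,\mu}(S)\}_{(\lambda,\mu)\in\Lambda\times M}$ is a frame for $\mathcal{HS}$; since $S\in\mathcal{M}^1_v$, the canonical dual window $\widetilde S$ stays in $\mathcal{M}^1_v$ (the operator analogue of the Gr\"ochenig--Leinert phenomenon, obtainable here because the inverse frame operator preserves the Feichtinger class). Given $C_S(T)\in\ell^{p,q}_m$, the synthesis $T=\sum_{\lambda,\mu}C_S(T)(\lambda,\mu)\,\gamma_{\lambda,\mu}(\widetilde S)$ converges in the operator modulation space, and applying $Q_{\varphi_0\otimes\varphi_0}$ term by term while estimating against the rapidly decaying kernels $Q_{\varphi_0\otimes\varphi_0}\big(\gamma_{\lambda,\mu}(\widetilde S)\big)$ gives $Q_{\varphi_0\otimes\varphi_0}T\in L^{p,q}_m$, i.e. $T\in\mathcal{M}^{p,q}_m$.

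The main obstacle is the amalgam control of the kernel: proving that $S\in\mathcal{M}^1_v$ forces $Q_S S\in W(C,L^1_v)$ on the four-dimensional double phase space, and that the lattice $\Lambda\times M$ may be taken fine enough that the discrete operator $\sum_{\lambda,\mu}\langle\,\cdot\,,\gamma_{\lambda,\mu}(S)\rangle\,\gamma_{\lambda,\mu}(S)$ is a small perturbation of $Q_S^* Q_S=\mathrm{Id}$, so that a Neumann series supplies the dual frame and the reconstruction. The twisting encoded in $\oast$ together with the mixed-norm bookkeeping over $\mathbb{R}^{4d}$ are where care is required; once the kernel is placed in the correct Wiener amalgam, the remainder follows the standard coorbit machinery.
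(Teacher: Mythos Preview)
Your approach is essentially the paper's: the forward implication is handled by the Wiener--amalgam upgrade $Q_S T = Q_{S_0}T \oast Q_{S}S_0 \in W(L^{p,q}_m)$ followed by lattice sampling, and the converse by boundedness of the synthesis map $D_{\tilde S}$ together with the reconstruction $T = D_{\tilde S} C_S(T)$, where the canonical dual $\tilde S$ remains in $\mathcal{M}^1_v$. The paper organises this as three short propositions (boundedness of $C_S$, of $D_S$, and $\tilde S\in\mathcal{M}^1_v$ via the Weyl--symbol correspondence and Gr\"ochenig's localization-of-frames result) and then states the theorem as an immediate consequence.

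One small detour in your proposal is unnecessary: you do not need the ``lattice fine enough / Neumann series'' argument. The hypothesis already assumes $S$ generates a Gabor frame on $\Lambda\times M$, so invertibility of the frame operator on $\mathcal{HS}$ is given; what must be checked is only that the inverse preserves $\mathcal{M}^1_v$, and the paper obtains this by passing to the Weyl symbol (where the frame is an ordinary Gabor frame on $U(\Lambda\times M)$ with window in $M^1_v(\mathbb{R}^{2d})$) and invoking the known result for function frames. Apart from this, your sketch matches the paper's route.
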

This characterisation gives a reconstruction formula for all operators in $\mathcal{M}^{p,q}_m$ spaces
\begin{corollary}\label{atomdecompintro}
    Let $S,\Tilde{S}\in\mathcal{M}^1_v$ generate a dual pair of Gabor frames for $\Lambda\times M$, that is to say $E_{\Tilde{S},S} = I_{\mathcal{HS}}$. Then for $T\in\mathcal{M}^{p,q}_m$;
    \begin{align*}
        T = \sum_{\Lambda\times M} Q_S T(\lambda,\mu) \gamma_{\lambda,\mu}(\Tilde{S}),
    \end{align*}
    where the sum converges unconditionally for $p,q<\infty$, otherwise in the weak-$*$ topology.
\end{corollary}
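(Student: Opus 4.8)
The plan is to read the reconstruction formula as the operator identity $D_{\Tilde{S}}\circ C_S = I$ on $\mathcal{M}^{p,q}_m$, where $C_S(T) = \big(Q_S T(\lambda,\mu)\big)_{(\lambda,\mu)\in\Lambda\times M}$ is the analysis operator and $D_{\Tilde{S}}$ is the synthesis operator $(c_{\lambda,\mu})\mapsto \sum_{\Lambda\times M} c_{\lambda,\mu}\,\gamma_{\lambda,\mu}(\Tilde{S})$. I would carry this out in three stages: first verify the identity on the Hilbert--Schmidt layer $\mathcal{HS}$, where it is precisely the dual-frame reconstruction; then establish that $C_S$ and $D_{\Tilde{S}}$ are bounded between $\mathcal{M}^{p,q}_m$ and $\ell^{p,q}_m$; and finally extend the identity off $\mathcal{HS}$ by density, which simultaneously settles the mode of convergence.

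For the base case, observe that for $T\in\mathcal{HS}$ the hypothesis $E_{\Tilde{S},S}=I_{\mathcal{HS}}$ unwinds, by the definition of the frame operator, to
\begin{align*}
    T = E_{\Tilde{S},S}T = \sum_{\Lambda\times M} \langle T,\gamma_{\lambda,\mu}(S)\rangle_{\mathcal{HS}}\,\gamma_{\lambda,\mu}(\Tilde{S}) = \sum_{\Lambda\times M} Q_S T(\lambda,\mu)\,\gamma_{\lambda,\mu}(\Tilde{S}),
\end{align*}
with unconditional convergence in Hilbert--Schmidt norm. This is the claimed formula on $\mathcal{HS}$, and it provides the dense set on which the general statement will be propagated.

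The boundedness step is where the real work sits, and I expect it to be the main obstacle. One direction, $C_S\colon\mathcal{M}^{p,q}_m\to\ell^{p,q}_m$, is already supplied by the preceding characterisation theorem. For the synthesis operator one must control the cross-Gram kernel $K\big((\lambda,\mu),(\lambda',\mu')\big)=\langle\gamma_{\lambda',\mu'}(\Tilde{S}),\gamma_{\lambda,\mu}(S)\rangle_{\mathcal{HS}}$ and show that $D_{\Tilde{S}}$ maps $\ell^{p,q}_m$ boundedly into $\mathcal{M}^{p,q}_m$. The key estimate is that when $S,\Tilde{S}\in\mathcal{M}^1_v$, the polarised Cohen's class $Q_S\big(\gamma_{w,z}(\Tilde{S})\big)$ lies in a weighted Wiener amalgam space $W(L^1_v)(\mathbb{R}^{4d})$ --- the double-phase-space analogue of the classical fact that $V_\varphi g\in W(L^1_v)$ for $\varphi,g\in M^1_v$. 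This yields a convolution-type domination of $Q_S(D_{\Tilde{S}}c)$ by the coefficient sequence against an $\ell^1_v$ envelope, after which the mapping property follows from the (mixed-norm, weighted) Young inequality on $\ell^{p,q}_m$. The main effort is in verifying this amalgam-space membership and the associated convolution relations on double phase space.

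With both operators bounded, the extension is routine. For $p,q<\infty$ the Feichtinger class $\mathcal{M}^1_v$ is dense in $\mathcal{M}^{p,q}_m$ and is contained in $\mathcal{HS}$, so the identity $D_{\Tilde{S}}C_S=I$ propagates from the base case by continuity; unconditional convergence of the series then follows from boundedness of $D_{\Tilde{S}}$ together with the unconditionality of the canonical basis of $\ell^{p,q}_m$. When $p$ or $q$ equals $\infty$, the relevant space is a dual and $\mathcal{M}^1_v$ is only weak-$*$ dense; here I would show that $D_{\Tilde{S}}$ is continuous for the weak-$*$ topologies and pass to the limit along finitely supported truncations of $C_S(T)$, obtaining convergence of the reconstruction in the weak-$*$ topology as stated.
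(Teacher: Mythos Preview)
Your proposal is correct and follows essentially the same route as the paper. The paper establishes the boundedness of $C_S\colon\mathcal{M}^{p,q}_m\to\ell^{p,q}_m$ and $D_{\Tilde{S}}\colon\ell^{p,q}_m\to\mathcal{M}^{p,q}_m$ in separate propositions---using precisely the twisted-convolution relation \eqref{twistedrelation} and the Wiener amalgam membership $Q_{S_0}S\in W(L^\infty,\ell^1_v)$ that you anticipate---then records the boundedness of $E_{\Tilde{S},S}=D_{\Tilde{S}}C_S$ on $\mathcal{M}^{p,q}_m$ and states the reconstruction as an immediate corollary, without spelling out the density argument you sketch in your final paragraph.
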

Recognizing that this decomposition amounts to the composition of an analysis operator, the matrix $(Q_S T(\lambda,\mu))_{\lambda \mu}$, and a synthesis operator, we proceed to study the action of operators in $\mathcal{M}^{p,q}_m$ spaces as linear maps between modulation spaces. We find that:
\begin{proposition}
    For $1\leq p,q < \infty$, $\mathcal{M}^{p,q}\subset \Pi^q(M^{p'};M^q)$.
\end{proposition}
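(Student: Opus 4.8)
The plan is to verify the defining inequality of the $q$-summing ideal directly, exploiting the fact that the window defining $\mathcal{M}^{p,q}$ is rank one; this is the analytic shadow of the ``analysis $\to$ matrix $\to$ synthesis'' factorisation described in the introduction. Recall that a map $U$ lies in $\Pi^q(X;Y)$ exactly when there is a constant $C$ with
\[
\Big(\sum_{k=1}^n\|Uf_k\|_Y^q\Big)^{1/q}\le C\sup_{\|\phi\|_{X^*}\le 1}\Big(\sum_{k=1}^n|\langle f_k,\phi\rangle|^q\Big)^{1/q}
\]
for every finite family $f_1,\dots,f_n\in X$, and $\pi_q(U)$ is the least such $C$. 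Here $X=M^{p'}$, so that $X^*=M^p$ for $p>1$, and $Y=M^q$. I would aim at the sharper quantitative statement $\pi_q(T)\le\|T\|_{\mathcal M^{p,q}}$, from which the continuous embedding is immediate.

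The first step is to unfold the window $S_0=\varphi_0\otimes\varphi_0$. Using the rank-one identity $\langle T,g\otimes h\rangle_{\mathcal{HS}}=\langle Th,g\rangle$ together with $\gamma_{w,z}(S_0)=(\pi(z)\varphi_0)\otimes(\pi(w)\varphi_0)$, one obtains
\[
Q_{S_0}T(w,z)=\langle T\pi(w)\varphi_0,\pi(z)\varphi_0\rangle,
\]
so that $Q_{S_0}T(w,\cdot)$ is the short-time Fourier transform of the ``column'' $T\pi(w)\varphi_0$. Since the $M^q$-norm is computed as the $L^q$-norm of the Gaussian STFT, for any $f\in M^{p'}$ we have $\|Tf\|_{M^q}^q=\int_{\mathbb R^{2d}}|\langle Tf,\pi(z)\varphi_0\rangle|^q\,dz=\int_{\mathbb R^{2d}}|\langle f,\psi_z\rangle|^q\,dz$, where $\psi_z:=T^*\pi(z)\varphi_0$ and the adjoint is taken in the $\mathfrak{S}'$ sense.

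The core estimate then follows by summing over $k$, exchanging sum and integral, and applying $M^{p'}$--$M^p$ duality for each fixed $z$: since $\psi_z/\|\psi_z\|_{M^p}$ lies in the unit ball of $X^*$,
\[
\sum_{k=1}^n\|Tf_k\|_{M^q}^q=\int_{\mathbb R^{2d}}\sum_{k=1}^n|\langle f_k,\psi_z\rangle|^q\,dz\le\Big(\int_{\mathbb R^{2d}}\|\psi_z\|_{M^p}^q\,dz\Big)\sup_{\|\phi\|_{M^p}\le1}\sum_{k=1}^n|\langle f_k,\phi\rangle|^q.
\]
It remains to identify the prefactor. Writing the $M^p$-norm of $\psi_z$ through its STFT and reusing the identity above gives $\|\psi_z\|_{M^p}^p=\int_{\mathbb R^{2d}}|Q_{S_0}T(w,z)|^p\,dw$, whence $\int_{\mathbb R^{2d}}\|\psi_z\|_{M^p}^q\,dz=\|Q_{S_0}T\|_{L^{p,q}}^q=\|T\|_{\mathcal M^{p,q}}^q$. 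This yields $\pi_q(T)\le\|T\|_{\mathcal M^{p,q}}$ and hence the stated inclusion.

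I expect the difficulty here to be care rather than depth. One must fix the ordering convention in $L^{p,q}(\mathbb R^{4d})$ (the $p$-integration in the ``$w$''-variable, the $q$-integration in the ``$z$''-variable) so that it is compatible with the target $\Pi^q(M^{p'};M^q)$, and justify the pairing $\langle Tf,\pi(z)\varphi_0\rangle=\langle f,T^*\pi(z)\varphi_0\rangle$ for $T\in\mathfrak{S}'$ as well as the STFT computation of the $M^p$- and $M^q$-norms. The only genuine subtlety is the endpoint $p=1$, where $X=M^\infty$ and $X^*$ strictly contains $M^1$; since $M^1\hookrightarrow(M^\infty)^*$ contractively one may still test against $\psi_z\in M^1$, and the estimate persists with $\|\psi_z\|_{M^1}$ in place of $\|\psi_z\|_{X^*}$, only enlarging the constant in the safe direction. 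Finally, I would note the discrete counterpart: via the atomic decomposition of \Cref{atomdecompintro} the operator $T$ factorises as a bounded Gabor synthesis after the matrix $(Q_ST(\lambda,\mu))_{\lambda,\mu}$ after a bounded Gabor analysis, and the elementary lemma that an $\ell^{p,q}$-matrix is $q$-summing from $\ell^{p'}$ to $\ell^q$ (the same one-line Hölder argument, with rows playing the role of $\psi_z$) together with the ideal property of $\Pi^q$ gives an alternative, equally short proof.
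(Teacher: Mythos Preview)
Your argument is correct and is the continuous twin of the paper's proof. The paper works through the discrete atomic decomposition of \cref{modspacereconstr}: it writes $T=\sum_{\lambda,\mu}Q_ST(\lambda,\mu)\,\pi(\mu)\tilde g\otimes\pi(\lambda)\tilde g$ for a rank-one window $S=g\otimes g$, so that $\sum_n\|Tf_n\|_{M^q}^q\asymp\sum_\mu\big|\sum_\lambda Q_ST(\lambda,\mu)\,c^n_\lambda\big|^q$, then normalises the rows $g_\mu=\{Q_ST(\lambda,\mu)\}_\lambda/\|\cdot\|_{\ell^p}$ and applies the $\ell^p$--$\ell^{p'}$ pairing to reach the $q$-summing inequality. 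Your proof runs the identical H\"older/duality step in the continuous variable, with the ``rows'' $\psi_z=T^*\pi(z)\varphi_0\in M^p$ replacing the $g_\mu\in\ell^p$, and the Bochner identity $\|T\|_{\mathcal M^{p,q}}=\|\psi_z\|_{L^q(M^p)}$ (established in the paper just after \cref{mixedpqdef}) replacing the $\ell^{p,q}$ norm of the Gabor matrix. What you gain is the sharp constant $\pi_q(T)\le\|T\|_{\mathcal M^{p,q}}$ and no dependence on frame bounds; what the paper's route buys is a direct link to the Gabor-matrix factorisation that drives the surrounding results. You in fact anticipate the paper's argument in your closing remark on the discrete counterpart, so the two proofs are really the same idea presented in dual coordinates.
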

Here $\Pi^q(M^{p'};M^q)$ is the Banach space of $q$-summing operators between modulation spaces $M^{p'}(\mathbb{R}^d)$ and $M^q(\mathbb{R}^d)$ (cf. \cite{konig}). Finally we relate the spaces $\mathcal{M}^{p,q}_m$ to those operators with Weyl symbols in $M^{p,q}_m(\mathbb{R}^{2d})$, finding the following:
\begin{theorem}
    Given $1\leq p \leq q \leq \infty$, if $T\in\mathcal{M}^{q,p}$ then $\sigma_T \in M^{p,q}(\mathbb{R}^{2d})$. Conversely given \newline $1 \leq q \leq p \leq \infty$, if $\sigma_T \in M^{p,q}(\mathbb{R}^{2d})$ then $T\in\mathcal{M}^{q,p}$.
\end{theorem}
If we consider rank-one operators for $S$ and/or $T$, we recover familiar concepts from time-frequency analysis. Results on boundedness of operators with Weyl symbols in modulation spaces (cf. \cite{gro04}, \cite{cordero03}, \cite{cordero19}, \cite{goetz13}) rely on the following identities, aka as ``magic identities", for functions $f,g,\varphi \in L^2(\mathbb{R}^{d})$:
\begin{align*}
    V_{W(\varphi,\varphi)} W(f,g)(w,z) &= e^{-2\pi w_2 z_2} V_{\varphi} f(w_1-\tfrac{z_1}{2}, w_2+\tfrac{z_2}{2}) \overline{V_{\varphi} g(w_1+\tfrac{z_1}{2}, w_2-\tfrac{z_2}{2})} \\
    V_{V_{\varphi} \varphi} V_g f (w,z) &= e^{-2\pi w_2 z_2} V_{\varphi} f(-z_2, w_2+z_1) \overline{V_{\varphi} g(-w_1-z_2, z_1)}.
\end{align*}
In the framework of QTFA, these results are simply the relation between $\gamma_{w,z}(S)$ and the effect the representation has on the Weyl symbol and spreading function, respectively. Similarly, the decomposition \cref{atomdecompintro}, in the case of a rank-one window $S = g\otimes g$, becomes a decomposition of an operator $T$ with respect to the Gabor matrix
\begin{align*}
    \big( \langle T \pi(\mu)g, \pi(\lambda)g \rangle \big)_{\lambda,\mu}.
\end{align*}
Such decompositions have been considered in for example \cite{BaGr17}, and are used to show boundedness conditions in \cite{cordero19}. The rank--one case of Gabor frames for Hilbert Schmidt operators is considered in \cite{balasz2008}. The adjoint map $Q_S^*$ for rank--one $S$ maps a symbol to the corresponding bilocalisation operator introduced in \cite{BoGa24}.

In this sense QTFA can be seen to be a unifying framework for vaguely related concepts in the time-frequency analysis of operators.

\section{Preliminaries}
\subsection{Time Frequency Analysis}
The basic objects in time-frequency analysis are closely connected to the projective unitary representation of the Weyl-Heisenberg group which gives rise to time-frequency shifts on $L^2(\mathbb{R}^d)$. These shifts can be defined as the composition of the modulation operator $M_{\omega}:f(t)\mapsto e^{2\pi i \omega t}f(t)$, and the translation operator $T_x:f(t)\mapsto f(t-x)$, by the identity
\begin{align*}
    \pi(z) = M_{\omega}T_x
\end{align*}
where $z=(x,\omega)\in \mathbb{R}^{2d}$. The operator $\pi(z)$ is unitary on $L^2(\mathbb{R}^d)$, and satisfies the identities
\begin{align*}
    \pi(z)\pi(z') &= e^{-2\pi i \omega' x}\pi(z+z') \\
    \pi(z)^* &= e^{-2\pi i x \omega}\pi(-z).
\end{align*}
The Short-Time Fourier Transform (STFT) can then be defined for functions $f,g\in L^2(\mathbb{R}^d)$, by
\begin{align}
    V_g f(z) := \langle f, \pi(z) g\rangle_{L^2}.
\end{align}
The function $g$ is referred to as the window, or atom, and is commonly taken to be a function concentrated around the origin in both time and frequency, such as the (normalised) Gaussian $\varphi_0(t)=2^{d/4}e^{\pi t^2}$. For a normalised $g\in L^2(\mathbb{R}^2)$, the map $V_g : f\mapsto V_g f$ is isometric from $L^2(\mathbb{R}^d)$ to $L^2(\mathbb{R}^{2d})$, and in fact we have the even stronger result, known as Moyal's identity, that given functions $f_1,f_2,g_1,g_2\in L^2(\mathbb{R}^d)$;
\begin{align*}
    \langle V_{g_1} f_1, V_{g_2} f_2 \rangle_{L^2(\mathbb{R}^{2d})} = \langle f_1, f_2 \rangle_{L^2(\mathbb{R}^d)} \overline{\langle g_1, g_2 \rangle_{L^2(\mathbb{R}^d)}}.
\end{align*}
The adjoint $V_g^*:L^2(\mathbb{R}^{2d}) \to L^2(\mathbb{R}^d)$ is given by
\begin{align*}
    V_g^*(F) :=  \int_{\mathbb{R}^{2d}} F(z) \pi(z)g\, dz,
\end{align*}
where the integral can be understood in the weak sense, and gives the reconstruction formula 
\begin{align*}
    f = \int_{\mathbb{R}^{2d}} V_g f(z) \pi(z)g\, dz.
\end{align*}
If we restrict the window $g$ to the Schwartz space $\mathscr{S}$, we can define the STFT for all tempered distributions $\mathscr{S}^\prime$, as $\mathscr{S}$ is invariant under $\pi$ shifts. Doing so allows us to define the modulation spaces $M^{p,q}_m$ for $1\leq p,q\leq \infty$ and $v$-moderate weight $m$ of polynomial growth (see Chapter 11 of \cite{grochenigtfa} for details on weight functions and function spaces), the space
\begin{align*}
    M^{p,q}_m(\mathbb{R}^{d}) := \{f\in (M^1_v(\mathbb{R}^{d}))': V_{\varphi_0} f \in L^{p,q}_m(\mathbb{R}^{2d})\},
\end{align*}
with the norm $\|f\|_{M^{p,q}_m}=\|V_{\varphi_0} f\|_{L^{p,q}_m}$. Given any $g\in M^1_v(\mathbb{R}^{d})$, the space $\{f\in (M^1_v(\mathbb{R}^{d}))': V_{g} f \in L^{p,q}_m(\mathbb{R}^{2d})\}$ is equal to the space $M^{p,q}_m(\mathbb{R}^{d})$, and the associated norms are equivalent. From the properties of the STFT above, it follows that $M^{2,2}(\mathbb{R}^{d})=L^2(\mathbb{R}^{d})$. 
As a result of the uniform continuity of the STFT, the modulation spaces have a useful description in terms of local smoothness and global decay. For our purposes, we define the Wiener amalgam space $W(L^{p,g}_m(\mathbb{R}^{d}))$ as the space of functions $F$ such that
    \begin{align*}
        \|F\|_{W(L^{p,q}_m(\mathbb{R}^{d}))} := \Big\|\esssup_{z\in T_{(k,l)}\Omega} F(z) \Big\|_{\ell^{p,q}_m} < \infty,
    \end{align*}
where $\Omega$ is the fundamental domain $[0,1]^d$. For a function $g$ in the modulation space $M^1_v(\mathbb{R}^d)$, $V_g g$ is then contained in $W(L^1_v(\mathbb{R}^{2d}))$, and we will make use of the following convolution relation for this space:
\begin{lemma}\label{weineramalgconv}
    Given $g\in W(L^1_v(\mathbb{R}^{2d}))$ and $f\in L^{p,q}_m(\mathbb{R}^{2d})$ continuous functions, where $m$ is a $v$-moderate weight, we have
    \begin{align*}
        \|f\natural g\|_{W(L^{p,q}_m(\mathbb{R}^{2d}))} \leq C \|f\|_{L^{p,q}_m(\mathbb{R}^{2d})} \|g\|_{W(L^1_v(\mathbb{R}^{2d}))}
    \end{align*}
    where $C$ is a constant depending on $m$ and $v$.
\end{lemma}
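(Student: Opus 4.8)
The plan is to reduce the continuous convolution estimate to a discrete convolution inequality on the integer lattice, using the control functions that define the Wiener amalgam norm. Since $|f \natural g(z)|$ is dominated pointwise by the ordinary convolution $(|f| * |g|)(z)$ (any phase twist in $\natural$ has modulus one and so does not affect absolute values, and a reflection in the second argument only relabels the lattice sum below), it suffices to estimate $|f| * |g|$. I cover $\mathbb{R}^{2d}$ by the unit cubes $Q_\mu := \mu + \Omega$ for $\mu \in \mathbb{Z}^{2d}$, and introduce the control sequences $g^{\#}(\mu) := \esssup_{z \in Q_\mu} |g(z)|$ and $f_{\#}(\nu) := \int_{Q_\nu} |f(y)|\, dy$, so that $\|g^{\#}\|_{\ell^1_v} = \|g\|_{W(L^1_v)}$ holds by definition, and the whole argument will be phrased in terms of these two sequences.

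The first key step is discretisation. For $z \in Q_\mu$ I split $(|f| * |g|)(z) = \sum_{\nu} \int_{Q_\nu} |f(y)|\,|g(z-y)|\, dy$. For $z \in Q_\mu$ and $y \in Q_\nu$ the difference $z - y$ lies in $(\mu - \nu) + [-1,1]^{2d}$, a set covered by the $2^{2d}$ cubes $Q_{\mu - \nu + \ell}$ with $\ell \in \{-1,0\}^{2d}$; hence $\esssup_{z \in Q_\mu} |g(z-y)| \leq \sum_{\ell} g^{\#}(\mu - \nu + \ell) =: G(\mu - \nu)$. This yields the bound on the control function of the convolution,
\[
    \big( |f| * |g| \big)^{\#}(\mu) \;\leq\; \sum_{\nu} f_{\#}(\nu)\, G(\mu - \nu) \;=\; (f_{\#} * G)(\mu),
\]
reducing matters to a discrete convolution. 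Here $G$ is a finite sum of integer shifts of $g^{\#}$, so $v$-moderateness of the weight gives $\|G\|_{\ell^1_v} \leq C_v \|g^{\#}\|_{\ell^1_v}$.

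The second step is the weighted discrete Young inequality $\|f_{\#} * G\|_{\ell^{p,q}_m} \leq C \|f_{\#}\|_{\ell^{p,q}_m}\, \|G\|_{\ell^1_v}$ for mixed-norm sequence spaces, which holds precisely because $m$ is $v$-moderate: the pointwise bound $m(\mu) \leq C\, v(\mu - \nu)\, m(\nu)$ lets one transfer the weight onto $G$ and then apply the unweighted mixed-norm Young inequality, treating the two sequence variables separately via Minkowski's inequality in each. It remains to control $\|f_{\#}\|_{\ell^{p,q}_m} \leq C \|f\|_{L^{p,q}_m}$: on each fixed unit cube $m$ is comparable to a constant and the cube has finite measure, so Hölder's inequality bounds $\int_{Q_\nu} |f|$ by the local $L^{p,q}_m(Q_\nu)$ mass, and summing in the mixed norm recovers $\|f\|_{L^{p,q}_m}$. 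Chaining the three estimates gives the claim with $C$ depending only on $m$ and $v$.

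The main obstacle I anticipate is the bookkeeping in the weighted mixed-norm Young inequality, namely verifying that convolution with an $\ell^1_v$ sequence is bounded on $\ell^{p,q}_m$ with a constant governed solely by the moderateness constant of $m$; this forces one to split the lattice $\mathbb{Z}^{2d}$ into its two blocks corresponding to the $p$- and $q$-directions and to distribute the weight correctly across them. By contrast, the cube-overlap count and the passage from $f$ to $f_{\#}$ are routine once the decomposition is fixed, so the weight-tracking is where the real care is required.
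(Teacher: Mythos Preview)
The paper does not actually prove this lemma: it is stated in the preliminaries section as a known background result (essentially the standard Wiener amalgam convolution relation, as in Gr\"ochenig's \emph{Foundations of Time-Frequency Analysis}), and the text moves on immediately without a \texttt{proof} environment. So there is no ``paper's own proof'' to compare against.

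Your argument is the standard one and is correct. The reduction $|f\natural g|\le |f|*|g|$ is immediate from the definition of $\natural$ in the paper, the cube discretisation with the $2^{2d}$ overlap count is routine, and the weighted mixed-norm Young inequality on $\ell^{p,q}_m$ is exactly the point where the $v$-moderateness of $m$ enters, as you identify. The only place worth tightening in a full write-up is the step $\|f_{\#}\|_{\ell^{p,q}_m}\le C\|f\|_{L^{p,q}_m}$: you need to apply H\"older separately in the two coordinate blocks of $\mathbb{R}^{2d}$ (first in the $p$-direction, then in the $q$-direction) and use that $m$ is comparable to $m(\nu)$ on $Q_\nu$ by moderateness; this is straightforward but should be made explicit since the mixed-norm structure means a single application of H\"older on $Q_\nu$ is not quite enough.
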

The correspondence principle of coorbit theory gives the characterisation of the $M^{p,q}_m(\mathbb{R}^d)$ spaces as those for which the map
\begin{align*}
    V_g: M^{p,q}_m(\mathbb{R}^{d}) \to \{F\in L^{p,q}_m(\mathbb{R}^{2d}): F \natural  V_g g = F\}.
\end{align*}
is an isomorphism for any $g\in M^1_v(\mathbb{R}^d)$. The twisted convolution $\natural$ for two functions $F,G$ is defined by
\begin{align*}
    F\natural G := \int_{\mathbb{R}^{2d}} F(z')G(z-z') e^{-2\pi i x'(\omega-\omega')}\, dz',
\end{align*}
and satisfies Young's inequalities for mixed-norm spaces;
\begin{align*}
     \|F\natural H \|_{L^{p,q}_m} \leq C_{m,v}\|F\|_{L^1_v} \|H\|_{L^{p,q}_m},
\end{align*}
for $F\in L^1_v(\mathbb{R}^{2d})$ and $H\in L^{p,q}_m(\mathbb{R}^{2d})$.

\subsection{Frames and Gabor Analysis}
Much of the appeal of time-frequency analysis, and coorbit theory more generally, lies in the discretisation results available. The STFT on a discrete subset of $\mathbb{R}^{2d}$ gives a basis like representation of a signal, known as a frame. Given a Hilbert space $\mathcal{H}$, a frame is a subset $\{f_i\}_{i\in I}$ satisfying the property known as the frame condition; 
\begin{align}
    A\|f\|^2_{\mathcal{H}} \leq \sum_{i\in I} \langle f, f_i\rangle^2_{\mathcal{H}} \leq B\|f\|^2_{\mathcal{H}}
\end{align}
for every $f\in\mathcal{H}$. An orthonormal basis satisfies the frame with frame bounds $A=B=1$, but more general frames need not give unique frame coefficients $\langle f, f_i\rangle$, and indeed this overcompleteness is in some senses desirable. The frame operator 
\begin{align*}
    Ef := \sum_{i\in I} \langle f, f_i\rangle_{\mathcal{H}}\, f_i
\end{align*}
is positive, bounded and invertible on $\mathcal{H}$ for a frame $\{f_i\}_{i\in I}$. The frame operator can be decomposed into the analysis operator $C:\mathcal{H}\to \ell^2$ defined by
\begin{align*}
    C f := \{\langle f, f_i\rangle_{\mathcal{H}} \}_{i\in I},
\end{align*}
and the synthesis operator $D: \ell^2 \to \mathcal{H}$ defined by
\begin{align*}
    D a := \sum_{i\in I} a_i f_i.
\end{align*}
Given a frame $\{f_i\}_{i\in I}$, the positivity and invertibility of $E$ give the existence of a dual frame $\{\Tilde{f}_i\}_{i\in I}$, given by $\Tilde{f}_i = E^{-1}f_i$, which by construction satisfies the identities
\begin{align*}
    f = \sum_{i\in I} \langle f, \Tilde{f}_i\rangle_{\mathcal{H}}\, f_i = \sum_{i\in I} \langle f, f_i\rangle_{\mathcal{H}}\, \Tilde{f}_i.
\end{align*}
In Gabor analysis, one constructs Gabor frames for $L^2(\mathbb{R}^d)$, of the form
\begin{align*}
    \{\pi(\lambda)g\}_{\lambda\in\Lambda}
\end{align*}
for some discrete set $\Lambda\in \mathbb{R}^{2d}$. Note that for the whole continuous space $\{\pi(z)g\}_{z \in\mathbb{R}^{2d}}$ is trivially a frame with $A=B$ and dual frame $\frac{g}{\|g\|^2_2}$. 
Moreover, the modulation spaces $M^{p,q}_m(\mathbb{R}^{d})$ are characterised by their frame coefficients. Namely given a Gabor frame for $L^2(\mathbb{R}^d)$ generated by window $g\in M^1_v(\mathbb{R}^d)$ , then for $f\in M^{\infty}$
\begin{align*}
    f \in M^{p,q}_m(\mathbb{R}^{d}) \iff \{\langle f, \pi(\lambda)g\rangle \}_{\lambda\in\Lambda} \in \ell^{p,q}_m.
\end{align*}

\subsection{Pseudo-Differential Operators}
We begin by defining the trace of an operator $S\in \mathcal{L}(L^2(\mathbb{R}^d);L^2(\mathbb{R}^d))$ as 
\begin{align*}
    \mathrm{tr}(S) := \sum_n \langle S e_n, e_n\rangle_{L^2},
\end{align*}
where $\{e_n\}_{n\in\mathbb{N}}$ is an orthonormal basis for $L^2(\mathbb{R}^d)$ and the trace does not depend on the choice of basis. The trace class of operators corresponds to those operators with finite trace of their positive part;
\begin{align*}
    \mathcal{S}^1:= \{S\in\mathcal{L}(L^2(\mathbb{R}^{d})): \mathrm{tr}(S) < \infty\},
\end{align*}
and is a Banach space and ideal of the bounded operators. The Hilbert-Schmidt operators are defined as the operators 
\begin{align*}
    \mathcal{HS}:= \{T\in\mathcal{L}(L^2(\mathbb{R}^{d})): T^*T\in\mathcal{S}^1 \}.
\end{align*}
The Hilbert-Schmidt operators form a Hilbert space equipped with the inner product 
\begin{align*}
    \langle S,T\rangle_{\mathcal{HS}} = \mathrm{tr}(ST^*)
\end{align*}
and contains $\mathcal{S}^1$ as a proper ideal. Since trace class and Hilbert-Schmidt operators (and more generally any Schatten class operator defined by $\ell^p$ decay of its singular values) are compact, they admit a spectral decomposition
\begin{align*}
    S = \sum_{n\in\mathbb{N}} \lambda_n \psi_n \otimes \phi_n,
\end{align*}
where $\lambda_n$ are the singular values of $S$, $\{\psi_n\}_{n\in \mathbb{N}}$ and $\{\phi_n\}_{n\in \mathbb{N}}$ are orthonormal sets and the sum converges in operator norm. The rank one operator here is defined as
\begin{align*}
    \psi \otimes \phi:= \langle \cdot,\phi\rangle_{L^2} \psi.
\end{align*}
For an operator $S\in\mathcal{L}(L^2(\mathbb{R}^d);\,L^2(\mathbb{R}^d))$, we can assign a function, or more generally a distribution, to the operator in several ways. The integral kernel of an operator $S$ is defined as the tempered distribution $K_S$ such that
\begin{align*}
    \langle S f, g \rangle_{\mathscr{S}^\prime,\mathscr{S}} = \langle K_S, g\otimes f \rangle_{\mathscr{S}^\prime,\mathscr{S}}.
\end{align*}
For a Hilbert-Schmidt operator $S$, the kernel $K_S \in L^2(\mathbb{R}^{2d})$, and conversely any kernel $K_S \in L^2(\mathbb{R}^{2d})$ defines a Hilbert-Schmidt operator \cite{pool66}. Alternatively, one can consider the Weyl symbol of an operator, $\sigma_S$. In the context of quantum time-frequency analysis, the Weyl symbol is best understood as the extension of the Wigner distribution in the rank one case to general operators. Recall that the Wigner distribution for functions $f,g\in L^2(\mathbb{R}^d)$ is given by
\begin{align*}
    W(f,g)(z) := \int_{\mathbb{R}^{d}} f\Big(x+\frac{t}{2}\Big)\overline{g\Big(x-\frac{t}{2}\Big)}e^{-2\pi i \omega t}\, dt.
\end{align*}
The Weyl symbol of the rank one operator $f\otimes g$ is then given by
\begin{align*}
    \sigma_{f\otimes g} = W(f,g),
\end{align*}
and extended linearly to the Hilbert-Schmidt operators. Due to Moyal's identity for the Wigner distribution, we find that
\begin{align*}
    \langle L_{\sigma}f, g \rangle_{L^2} = \langle \sigma, W(f,g) \rangle_{L^2}
\end{align*}
where $L_{\sigma}$ is the unique Hilbert-Schmidt operator with symbol $\sigma$. The Weyl quantisation $\sigma\to L_{\sigma}$ is unitary from $L^2(\mathbb{R}^{2d})$ to $\mathcal{HS}$, and by considering the Schwartz functions, extended by duality to tempered distributions. Finally, we can consider the spreading representation of an operator. Given a function $f\in L^2(\mathbb{R}^{2d})$, the operator given by
\begin{align*}
    H_f = \int_{\mathbb{R}^{2d}} f(z)\pi(z)\, dz,
\end{align*}
where the integral can be understood weakly, defines a Hilbert Schmidt operator, and conversely any Hilbert Schmidt operator $S$ has a unique spreading function $\eta_S$. In the rank one case, the spreading function is given by the ambiguity function:
\begin{align*}
    \eta_{f\otimes g}(z) = e^{-i\pi x\omega} V_g f(\omega,-x).
\end{align*}
Similarly to the kernel and Weyl quantisations, the spreading representation can be extended to the space of tempered distributions. We can consider how the various quantisation schemes interact, and find that for an operator $S\in\mathcal{L}(\mathscr{S};\mathscr{S}^\prime)$;
\begin{align*}
    \sigma_S &= \mathcal{F}_2 \mathcal{T} K_S
\end{align*}
and
\begin{align*}
    \eta_S &= \mathcal{F}_{\Omega} (\sigma_S).
\end{align*}
Here the symplectic Fourier transform
\begin{align*}
    \mathcal{F}_{\Omega} f(z) := \int_{\mathbb{R}^{2d}} f(z')e^{-2\pi i (x'\omega - x\omega ')}\, dz'
\end{align*}
is the appropriate notion of a Fourier transform on phase space. Importantly, the $M^p(\mathbb{R}^d)$ spaces for $1\leq p \leq \infty$ are invariant under the above transformations, and hence the spaces of operators formed by the $M^p(\mathbb{R}^d)$ spaces via the quantisation schemes discussed coincide.

\subsection{Quantum Harmonic Analysis}
Quantum time-frequency analysis as discussed in this paper can be seen as an extension of Quantum Harmonic Analysis (QHA), introduced by Werner in \cite{werner84}, and recently considered through the lens of time-frequency analysis in \cite{skrett18} \cite{skrett19} \cite{skrett20}, where tools from QHA are used to generalise known results and provide new insights by extending the mechanics of harmonic analysis to operators. A key insight in QHA is that the unitary representation of the Weyl-Heisenberg group on the Hilbert-Schmidt operators given by
\begin{align*}
    \alpha_z(S) := \pi(z)S\pi(z)^*,
\end{align*}
amounts to a translation of the Weyl symbol of the operator. That is to say,
\begin{align*}
    \sigma_{\alpha_z(s)} = T_z \sigma_S.
\end{align*}
If one then considers the trace as the operator analogue to the integral of a function, convolutions between operators and functions are naturally defined in the following manner:
\begin{definition}
For $f\in L^p(\mathbb{R}^{2d})$, $S\in\mathcal{S}^q$ and $T\in\mathcal{S}^p$, where $\frac{1}{p}+\frac{1}{q}=1+\frac{1}{r}$, convolutions are defined by
\begin{align*}
    f \star S &:= \int_{\mathbb{R}^{2d}} f(z) \alpha_z(S)\, dz \\
    S \star T(z) &:= \mathrm{tr}(S\alpha_z(\Check{T}))
\end{align*}
where $\Check{T}=PTP$ where $P$ is the parity operator. The first integral is to be interpreted as a Bochner integral. 
\end{definition}
Alongside convolutions, the central operation in QHA is the Fourier-Wigner transform $\mathcal{F}_W:\mathcal{HS}\to L^2(\mathbb{R}^{2d})$, defined as 
\begin{align*}
    \mathcal{F}_W (S)(z) := e^{-i\pi x \omega}\mathrm{tr}\big(\pi(-z)\beta_w (S)\big).
\end{align*}
The Fourier-Wigner of an operator gives the operator's spreading function. The convolutions and Fourier-Wigner transform interact as one would hope. Convolutions are associative and commutative, and satisfy the Fourier convolution property:
\begin{align*}
    \mathcal{F}_W(f\star S) &= \mathcal{F}_{\Omega}(f)\cdot\mathcal{F}_W (S) \\
    \mathcal{F}_{\Omega}(T \star S) &= \mathcal{F}_W (T) \cdot \mathcal{F}_W (S).
\end{align*}

\section{A Projective Representation on Hilbert-Schmidt Operators}\label{tfshifts}
In Werner's Quantum Harmonic Analysis framework, one constructs a unitary representation $\alpha_z$ on Hilbert-Schmidt operators. The effect of this representation on the Weyl symbol of an operator is a translation by $z$ in phase space, hence the representation is \textit{not} projective. Motivated by time-frequency analysis of functions/signals, we introduce a new representation on the space of Hilbert-Schmidt operators:
\begin{definition}\label{alphawzdef}
    Let $w=(w_1,w_2),z=(z_1,z_2)\in\mathbb{R}^{2d}$, and $S\in\mathcal{HS}$. The representation $\gamma_{w,z}$ is then defined as
\begin{align*}
    \gamma_{w,z} (S) := \pi(z)S\pi(w)^*.
\end{align*}
\end{definition}
The representation $\gamma_{w,z}$ is hence a \textit{projective} representation of $\mathbb{H}\times\mathbb{H}$ on $\mathcal{HS}\cong L^2(\mathbb{R}^d)\otimes L^2(\mathbb{R}^d)$, which we can see by a simple calculation: Using the notation of \cref{alphawzdef},
\begin{align*}
    \gamma_{w',z'}\big(\gamma_{w,z}(S)\big) = e^{-2\pi i (z_2\cdot z_1' - w_2'\cdot w_1)}\gamma_{w+w',z+z'}(S).
\end{align*}
To elucidate this representation, consider the rank one example $S=f\otimes g$. The action of $\gamma_{w,z}$ on $S$ is then 
\begin{align*}
    \gamma_{w,z} (S) = (\pi(z)f)\otimes (\pi(w)g).
\end{align*}

\subsection{Matrix coefficients and Integrated Representation}
Given our representation $\gamma$ on the Hilbert-Schmidt operators, it is natural to consider the matrix coefficient of $\gamma$. As the Hilbert-Schmidt operators form a Hilbert space, we can consider the matrix coefficients weakly, and denote the resulting form by $Q$, as the Polarised Cohen's class:
\begin{definition}
    Given $S,T\in\mathcal{HS}$, the \textit{Polarised Cohen's class} is given by the function $Q_S T: L^2(\mathbb{R}^{4d})\to \mathbb{C}$, defined as
    \begin{align*}
        Q_S T(w,z) := \langle T, \gamma_{w,z}(S)\rangle_{\mathcal{HS}}.
    \end{align*}
\end{definition}
The nomenclature polarised Cohen's class refers to the classification of a certain class of quadratic time-frequency representations, (partly) extended to operators in \cite{skrett19}. To see how the the polarised Cohen's class looks in practice, we consider the rank-one example:
\begin{example}
Let $S=f \otimes g$ and $T=\psi \otimes \phi$. Then 
\begin{align*}
    Q_{f \otimes g} (\psi \otimes \phi)(z,w) &= \langle \psi \otimes \phi, \gamma_{w,z}(f \otimes g)\rangle_{\mathcal{HS}} \\
    &= \mathrm{tr}((\psi \otimes \phi)\pi(w)(f \otimes g)^*\pi(z)^*) \\
    &= V_{f} \psi(z) \cdot \overline{V_{g} \phi(w)}.
\end{align*}
as we would expect for a polarised version of the rank-one Cohen's class (spectrogram).
\end{example}

Crucially, by polarising the general form of the Cohen's class, we move from a quadratic time-frequency representation to a linear one. In fact, the polarised Cohen's class is not only a linear form but also an isometry:
\begin{proposition}
    The mapping $T\mapsto Q_S (T)$ is an isometry from $\mathcal{HS}$ to $L^2(\mathbb{R}^{4d})$ for every $S\in \mathcal{HS}$ with $\|S\|_{\mathcal{HS}}=1$.
\end{proposition}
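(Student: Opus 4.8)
The plan is to reduce the claim to the rank-one situation, where the Polarised Cohen's class factors into a product of two short-time Fourier transforms, and then to bootstrap to all of $\mathcal{HS}$ via the singular value decomposition together with a density argument. Since $\gamma_{w,z}(S)=\pi(z)S\pi(w)^*$ is linear in $S$ while the $\mathcal{HS}$-inner product is conjugate linear in its second slot, $Q_S T$ is linear in $T$ and conjugate linear in $S$; I would in fact establish the stronger sesquilinear (Moyal-type) orthogonality relation
\[
\langle Q_{S_1}T_1, Q_{S_2}T_2\rangle_{L^2(\mathbb{R}^{4d})} = \langle T_1, T_2\rangle_{\mathcal{HS}}\,\overline{\langle S_1, S_2\rangle_{\mathcal{HS}}},
\]
of which the asserted isometry is the diagonal case $S_1=S_2=S$, $T_1=T_2=T$ with $\|S\|_{\mathcal{HS}}=1$.

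First I would treat rank-one operators $S_i=f_i\otimes g_i$ and $T_i=\psi_i\otimes\phi_i$. By the rank-one formula computed in the preceding example, $Q_{f_i\otimes g_i}(\psi_i\otimes\phi_i)(z,w)=V_{f_i}\psi_i(z)\,\overline{V_{g_i}\phi_i(w)}$, so the $L^2(\mathbb{R}^{4d})$-inner product separates, by Fubini, into the product of an integral in $z$ and an integral in $w$, each an $L^2(\mathbb{R}^{2d})$-inner product of short-time Fourier transforms. Applying Moyal's identity to each factor turns these into $\langle\psi_1,\psi_2\rangle\overline{\langle f_1,f_2\rangle}$ and $\overline{\langle\phi_1,\phi_2\rangle}\langle g_1,g_2\rangle$ respectively; regrouping and using $\langle\psi\otimes\phi,\psi'\otimes\phi'\rangle_{\mathcal{HS}}=\langle\psi,\psi'\rangle\overline{\langle\phi,\phi'\rangle}$ together with the analogous identity for the windows yields the orthogonality relation for rank-one arguments.

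To pass to general $S,T\in\mathcal{HS}$, I would invoke the singular value decompositions $S=\sum_m\mu_m\,u_m\otimes v_m$ and $T=\sum_n\lambda_n\,\psi_n\otimes\phi_n$ with $\{u_m\},\{v_m\},\{\psi_n\},\{\phi_n\}$ orthonormal. For the finite-rank truncations the rank-one relation applies termwise, and orthonormality collapses the double sums (since $\langle\psi_n\otimes\phi_n,\psi_k\otimes\phi_k\rangle_{\mathcal{HS}}=\delta_{nk}$, and likewise for the windows), giving $\|Q_S T\|_{L^2}^2=\big(\sum_m|\mu_m|^2\big)\big(\sum_n|\lambda_n|^2\big)=\|S\|_{\mathcal{HS}}^2\|T\|_{\mathcal{HS}}^2$ on finite-rank operators. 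Finite-rank operators being dense in $\mathcal{HS}$, this identity shows that for fixed $S$ the linear map $T\mapsto Q_S T$ is bounded on the dense finite-rank subspace with constant $\|S\|_{\mathcal{HS}}$, hence extends continuously to all of $\mathcal{HS}$; continuity of $T\mapsto\langle T,\gamma_{w,z}(S)\rangle$ for each fixed $(w,z)$ then identifies the extension with $Q_S T$ almost everywhere. The same density step in the $S$ variable completes the argument, and setting $\|S\|_{\mathcal{HS}}=1$ gives the isometry.

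The main obstacle is this final limiting step. The pointwise definition $Q_S T(w,z)=\langle T,\gamma_{w,z}(S)\rangle$ gives only an $L^\infty$-type bound, $|Q_S T(w,z)|\leq\|S\|_{\mathcal{HS}}\|T\|_{\mathcal{HS}}$ by Cauchy--Schwarz, which does not by itself place $Q_S T$ in $L^2(\mathbb{R}^{4d})$; the $L^2$-identity must first be secured on the dense finite-rank subspace and then transported by continuity, while checking that the resulting $L^2$-limit coincides with the pointwise-defined function. Care is likewise needed to avoid interchanging the (possibly infinite) singular-value sums with the integration directly, which is precisely what the finite-rank truncation plus density is designed to circumvent.
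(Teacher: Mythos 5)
Your proof is correct, and its engine is the same as the paper's: decompose both operators into rank-one terms via the singular value decomposition, use the factorisation $Q_{f\otimes g}(\psi\otimes\phi)(z,w)=V_f\psi(z)\,\overline{V_g\phi(w)}$, and collapse everything with Moyal's identity for the STFT. Where you differ is in the handling of infinite rank: the paper expands $|Q_S T|^2$ as a fourfold infinite sum inside the integral and invokes orthogonality termwise, tacitly interchanging the infinite summation with integration over $\mathbb{R}^{4d}$, whereas you first secure the identity on finite-rank operators and then pass to the limit by density, identifying the $L^2$-limit with the pointwise-defined $Q_S T$ via the uniform bound $|Q_S(T-T_k)(w,z)|\leq\|S\|_{\mathcal{HS}}\|T-T_k\|_{\mathcal{HS}}$ and a.e.\ convergence along a subsequence. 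This truncation-plus-density route is more careful than the paper's argument and closes the convergence gap you correctly flag at the end. A second, cosmetic difference: you take the full sesquilinear orthogonality relation $\langle Q_{S_1}T_1,Q_{S_2}T_2\rangle_{L^2}=\langle T_1,T_2\rangle_{\mathcal{HS}}\overline{\langle S_1,S_2\rangle_{\mathcal{HS}}}$ as the primary object, which the paper only states afterwards as a separate proposition (``following the approach of the above proof''); proving it first costs nothing, since the polarised quantities separate by Fubini exactly as in the diagonal case, and it buys you the Cauchy estimate $\|Q_{S_k-S_j}T\|_{L^2}=\|S_k-S_j\|_{\mathcal{HS}}\|T\|_{\mathcal{HS}}$ needed for the density step in the window variable.
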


\begin{proof}
For $S,T\in \mathcal{HS}$, we decompose spectrally as $S=\sum_n \lambda_n f_n \otimes g_n$ and $T=\sum_n \eta_n \psi_n \otimes \phi_n$, where $\|f_i\|=\|g_i\|=\|\psi_i\|=\|\phi_i\|=1$. Then proceeding as in the previous example;
\begin{align}\label{specdecomp}
\begin{split}
    Q_S(T)(z,w) &= \langle T, \gamma_{w,z}(S)\rangle_{\mathcal{HS}} \\
    &= \sum_{n,m} \overline{\lambda_n}\eta_m V_{f_n} \psi_m (z) \cdot \overline{V_{g_n} \phi_m(w)},
\end{split}
\end{align}
from which it follows that 
\begin{align*}
    |Q_S(T)(z,w)|^2 = \sum_{i,j,n,m} \overline{\lambda_n}\lambda_i\eta_m\overline{\eta_i} V_{f_n}\psi_m(z)\overline{V_{f_i}\psi_j(z)} \cdot V_{g_n}\phi_m(w)\overline{V_{g_i}\phi_j(w)}.
\end{align*}
The proposition then follows from Moyal's orthogonality relation for functions;
\begin{align*}
    \int_{\mathbb{R}^{4d}} |Q_S(T)(z,w)|^2 \, dz \, dw &= \sum_{i,j,n,m} \overline{\lambda_n}\lambda_i\eta_m\overline{\eta_i} \int_{\mathbb{R}^{2d}} V_{f_n}\psi_m(z)\overline{V_{f_i}\psi_j(z)} \int_{\mathbb{R}^{2d}} V_{g_n}\phi_m(w)\overline{V_{g_i}\phi_j(w)}\, dw\, dz \\
    &=  \sum_{i,j,n,m} \overline{\lambda_n}\lambda_i\eta_m\overline{\eta_i} \delta_{n,i}\delta_{m,j} \\
    &= \|T\|^2_{\mathcal{HS}}\cdot\|S\|^2_{\mathcal{HS}}.
\end{align*}

\end{proof}
In fact following the approach of the above proof we have an even stronger statement, a Quantum Time-Frequency Analysis Moyal's identity: 
\begin{proposition}
    For $R,S,T,W\in\mathcal{HS}$;
\begin{align*}
    \langle Q_R (S), Q_T (W)\rangle_{L^2(\mathbb{R}^{4d})} = \langle R, T\rangle_{\mathcal{HS}}\cdot \overline{\langle S, W\rangle}_{\mathcal{HS}}.
\end{align*}
\end{proposition}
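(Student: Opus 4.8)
The plan is to mirror the proof of the preceding isometry, but to carry four independent operators through the computation rather than two. I would fix normalised spectral decompositions $R=\sum_n\lambda_n f_n\otimes g_n$, $S=\sum_m\eta_m\psi_m\otimes\phi_m$, $T=\sum_k\mu_k a_k\otimes b_k$ and $W=\sum_l\nu_l c_l\otimes d_l$, and use the rank-one identity $Q_{f\otimes g}(\psi\otimes\phi)(w,z)=V_f\psi(z)\,\overline{V_g\phi(w)}$ from the example, together with the (anti)linearity of $\gamma_{w,z}$, to expand both $Q_R(S)$ and $Q_T(W)$ as double sums of products of the form $V_{f_n}\psi_m(z)\,\overline{V_{g_n}\phi_m(w)}$, exactly as in \eqref{specdecomp}.

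Next I would expand $\langle Q_R(S),Q_T(W)\rangle_{L^2(\mathbb{R}^{4d})}$ into a quadruple sum and exploit the fact that each summand is a product of a function of $z$ alone and a function of $w$ alone, so that the integral over $\mathbb{R}^{4d}$ splits into a product of two integrals over $\mathbb{R}^{2d}$. Each factor is then an inner product of short-time Fourier transforms, and I would apply Moyal's identity for functions once in the $z$-variable and once in the $w$-variable. The key structural point, in contrast to the isometry proof, is that here no Kronecker delta collapses the sum; instead the quadruple sum factors into a product of two double sums, one in the indices $(n,k)$ and one in $(m,l)$. Reassembling these with the rank-one rule $\langle p\otimes q,a\otimes b\rangle_{\mathcal{HS}}=\langle p,a\rangle\overline{\langle q,b\rangle}$ recovers two Hilbert--Schmidt inner products: the window pair $R,T$ reassembles into $\overline{\langle R,T\rangle}_{\mathcal{HS}}$ and the argument pair $S,W$ into $\langle S,W\rangle_{\mathcal{HS}}$, so that the conjugation sits on the window pair exactly as it does on the window slot in the classical Moyal identity. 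The only genuine algebraic care needed is to keep each complex conjugate attached to the correct pair.

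The main obstacle is analytic rather than algebraic: with four infinite spectral expansions, interchanging the summations with the double integral must be justified. I would avoid this by first establishing the identity for finite-rank $R,S,T,W$, where every sum is finite and the computation above is rigorous verbatim, and then extending to arbitrary Hilbert--Schmidt operators by density. This extension is immediate from the preceding isometry: since $\|Q_S(T)\|_{L^2}=\|S\|_{\mathcal{HS}}\|T\|_{\mathcal{HS}}$, the map $(S,T)\mapsto Q_S(T)$ is bounded and sesquilinear, so both sides of the asserted identity are separately continuous in each of $R,S,T,W$; an identity of continuous multilinear forms that holds on the dense subspace of finite-rank operators holds throughout $\mathcal{HS}$. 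This route isolates Moyal's identity for the short-time Fourier transform, applied once per tensor leg, as the single nontrivial ingredient, while sidestepping any appeal to Fubini--Tonelli on the quadruple sum.
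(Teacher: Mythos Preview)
Your approach is correct and is exactly what the paper intends: it does not give a separate argument but simply says the identity follows ``following the approach of the above proof,'' i.e.\ spectral decomposition plus two applications of Moyal's identity, which is precisely what you do (with the added tidiness of first treating finite rank and then passing to the limit via the isometry). One remark: your computation places the complex conjugate on the window pair, giving $\langle S,W\rangle_{\mathcal{HS}}\,\overline{\langle R,T\rangle}_{\mathcal{HS}}$, which is indeed what the rank-one calculation and the classical Moyal identity force; the paper's displayed formula has the bar on the other factor, which appears to be a typo there rather than an error on your part.
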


As a simple corollary we observe:
\begin{corollary}
    Given a non-zero $S\in\mathcal{HS}$, 
    \begin{align*}
        \overline{span}(\{\gamma_{w,z}(S)\}_{w,z\in\mathbb{R}^{2d}}) = \mathcal{HS}.
    \end{align*}
\end{corollary}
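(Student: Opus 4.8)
The plan is to deduce totality from the isometry, via the standard Hilbert-space principle that a subset of a Hilbert space has dense span if and only if its orthogonal complement is trivial. Since $\mathcal{HS}$ is a Hilbert space, it suffices to show that the only operator $T \in \mathcal{HS}$ satisfying $\langle T, \gamma_{w,z}(S)\rangle_{\mathcal{HS}} = 0$ for all $(w,z) \in \mathbb{R}^{4d}$ is $T = 0$.

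First I would unwind the orthogonality hypothesis. By the definition of the polarised Cohen's class, the condition that $T$ be orthogonal to $\gamma_{w,z}(S)$ for every $(w,z)$ is precisely the statement that $Q_S T(w,z) = 0$ for all $(w,z)$, i.e.\ that $Q_S T$ vanishes identically as an element of $L^2(\mathbb{R}^{4d})$.

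Next I would apply the Moyal-type identity established above (equivalently, the isometry proposition) with both windows equal to $S$ and both arguments equal to $T$, which gives $\|Q_S T\|^2_{L^2(\mathbb{R}^{4d})} = \|S\|^2_{\mathcal{HS}}\,\|T\|^2_{\mathcal{HS}}$. Because $Q_S T = 0$ forces the left-hand side to vanish, and because $S$ is assumed non-zero so that $\|S\|_{\mathcal{HS}} > 0$, I conclude $\|T\|_{\mathcal{HS}} = 0$, hence $T = 0$. This shows the orthogonal complement of the span is trivial, and therefore $\overline{\mathrm{span}}(\{\gamma_{w,z}(S)\}) = \mathcal{HS}$.

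There is no genuine obstacle here: the entire content is carried by the isometry, which guarantees that $Q_S$ is injective and annihilates only the zero operator. The single point requiring a moment's care is the normalization — the isometry proposition is stated for $\|S\|_{\mathcal{HS}} = 1$, but the general Moyal identity retains the factor $\|S\|^2_{\mathcal{HS}}$, so the assumption that $S$ is merely non-zero (rather than of unit norm) is exactly what is needed to divide this factor out.
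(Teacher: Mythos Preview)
Your proof is correct and is exactly the intended derivation: the paper presents this as an immediate corollary of the Moyal identity for the polarised Cohen's class (which gives $\|Q_S T\|_{L^2}^2 = \|S\|_{\mathcal{HS}}^2 \|T\|_{\mathcal{HS}}^2$), and your argument via triviality of the orthogonal complement is precisely how one extracts density from that identity.
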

In quantum harmonic analysis, the density of translates of an operator depends on the zeros of its Fourier Wigner transform \cite{skrett18}, in analogue to the case of translates of functions. In the quantum time-frequency setting, density of time-frequency shifts of an operator requires only that the operator is non-zero. The density of translates of an operator also informs the injectivity of operator operator convolutions. In practice, this means for a sufficiently nice operator $S$, the map $T\mapsto Q_S T(z,z)$ is injective in $\mathcal{HS}$. However, no such operator exists for which the map $T\mapsto Q_S T(\lambda,\lambda)$ for some discrete lattice $\Lambda\subset \mathbb{R}^{2d}$ is injective in $\mathcal{HS}$, due to the failure of frames of translates for $L^2$. We will see this shortcoming is avoided by lifting the representation to double-phase space and consider a Schr\"odinger representation of the double-phase space on the Hilbert-Schmidt operators. 

Taking now a fixed $S\in\mathcal{HS}$, and considering the polarised Cohen's class as the mapping $Q_S:\mathcal{HS}\to L^2(\mathbb{R}^{4d})$, we can construct the adjoint. For some $F\in L^2(\mathbb{R}^{4d})$, this is can be done directly by 
\begin{align*}
    \langle T, Q_S^* F \rangle_{\mathcal{HS}} &= \langle Q_S T, F \rangle_{L^2} \\
    &= \int_{\mathbb{R}^{4d}} \langle T, \gamma_{w,z}(S)\rangle_{\mathcal{HS}}\overline{F(w,z)}\, dz\, dw \\
    &= \langle T, \int_{\mathbb{R}^{4d}} F(w,z) \gamma_{w,z}(S)\, dz\, dw\rangle_{\mathcal{HS}}
\end{align*}
where the vector-valued integral can be understood weakly. 

An important feature of the polarised Cohen's class is the reproducing property:
\begin{proposition}
    Let $S\in\mathcal{HS}$. Then For any $T\in\mathcal{HS}$, we have the identity
    \begin{align*}
        Q_S^* Q_S T = T.
    \end{align*}
    Furthermore, this gives rise to the reproducing formula
    \begin{align*}
        Q_S T(w,z) = \int_{\mathbb{R}^4d} Q_S T(w',z')k_{w,z}(w',z')\, dw'\, dz',
    \end{align*}
    where $k_{w,z}(w',z') = \langle \pi(z')S\pi(w')^*,\pi(z)S\pi(w)^*\rangle_{\mathcal{HS}}$.
\end{proposition}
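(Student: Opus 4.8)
The plan is to derive the reproducing identity $Q_S^* Q_S T = T$ directly from the isometry property already established, and then unfold it to obtain the reproducing kernel formula. The key observation is that the isometry $T \mapsto Q_S T$ from $\mathcal{HS}$ into $L^2(\mathbb{R}^{4d})$ (for $\|S\|_{\mathcal{HS}}=1$) together with the polarisation given by the Quantum Time-Frequency Moyal identity tells us that $\langle Q_S T, Q_S R\rangle_{L^2} = \langle T, R\rangle_{\mathcal{HS}}$ for all $T, R \in \mathcal{HS}$. This is precisely the statement that $Q_S^* Q_S = I_{\mathcal{HS}}$: indeed, for every $R \in \mathcal{HS}$ we have $\langle Q_S^* Q_S T, R\rangle_{\mathcal{HS}} = \langle Q_S T, Q_S R\rangle_{L^2} = \langle T, R\rangle_{\mathcal{HS}}$, and since $R$ is arbitrary the first identity follows. (One should note the normalisation $\|S\|_{\mathcal{HS}}=1$ is needed, as in the isometry proposition; for general non-zero $S$ the identity holds up to the factor $\|S\|_{\mathcal{HS}}^2$.)

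Having established $Q_S^* Q_S T = T$, the second step is to substitute the explicit formula for the adjoint $Q_S^*$ into this identity and evaluate the polarised Cohen's class of the result. First I would write, using the weak integral representation of $Q_S^*$ derived just before the statement,
\begin{align*}
    T = Q_S^* Q_S T = \int_{\mathbb{R}^{4d}} Q_S T(w',z')\, \gamma_{w',z'}(S)\, dw'\, dz'.
\end{align*}
Then I would apply the functional $Q_S(\cdot)(w,z) = \langle \cdot, \gamma_{w,z}(S)\rangle_{\mathcal{HS}}$ to both sides. Pulling the (weakly convergent) integral through the inner product gives
\begin{align*}
    Q_S T(w,z) = \int_{\mathbb{R}^{4d}} Q_S T(w',z')\, \langle \gamma_{w',z'}(S), \gamma_{w,z}(S)\rangle_{\mathcal{HS}}\, dw'\, dz',
\end{align*}
which is exactly the reproducing formula once we recognise $\langle \gamma_{w',z'}(S), \gamma_{w,z}(S)\rangle_{\mathcal{HS}} = \langle \pi(z')S\pi(w')^*, \pi(z)S\pi(w)^*\rangle_{\mathcal{HS}} = k_{w,z}(w',z')$ by \cref{alphawzdef}.

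The main obstacle I anticipate is purely one of rigour rather than of ideas: justifying the interchange of the weak vector-valued integral defining $Q_S^*$ with the inner product against $\gamma_{w,z}(S)$. This is legitimate precisely because $Q_S^*$ was defined weakly, so that $\langle Q_S^* F, R\rangle_{\mathcal{HS}} = \int F(w',z')\langle \gamma_{w',z'}(S), R\rangle\, dw'\, dz'$ for every $R \in \mathcal{HS}$; taking $R = \gamma_{w,z}(S)$ is an immediate special case and requires no additional convergence argument beyond what the adjoint construction already supplies. The integrand $Q_S T \in L^2(\mathbb{R}^{4d})$ by the isometry, and $k_{w,z}(\cdot,\cdot) = \overline{Q_S(\gamma_{w,z}(S))}$ is likewise in $L^2$, so the defining integral converges as an $L^2$ pairing. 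Thus the only care needed is to phrase the derivation so that the weak interpretation of $Q_S^*$ is invoked rather than a pointwise Fubini argument, after which both assertions follow without further computation.
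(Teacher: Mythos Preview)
Your argument is correct and takes a genuinely different route from the paper. The paper proves $Q_S^* Q_S T = T$ by writing $S=\sum_n f_n\otimes g_n$ and $T=\sum_n \psi_n\otimes \phi_n$ spectrally, expanding $Q_S^* Q_S T$ as a double sum of terms of the form $V_{f_i}^* V_{f_n}\psi_m \otimes V_{g_i}^* V_{g_n}\phi_m$, and then invoking the classical Moyal identity together with the orthogonality of the spectral components to kill the off-diagonal terms. In other words, the paper essentially reproves the operator Moyal identity inside this argument.

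You instead use the operator-level Moyal identity (the proposition immediately preceding this one) as a black box: from $\langle Q_S T, Q_S R\rangle_{L^2}=\|S\|_{\mathcal{HS}}^2\langle T,R\rangle_{\mathcal{HS}}$ one reads off $Q_S^* Q_S=\|S\|_{\mathcal{HS}}^2 I$ directly, and the reproducing formula falls out by applying $\langle\,\cdot\,,\gamma_{w,z}(S)\rangle$ to the weak integral for $Q_S^*$. This is shorter and avoids the case analysis on the rank of $S$; the price is only that it depends on the previous proposition rather than being self-contained. Your remark about the normalisation $\|S\|_{\mathcal{HS}}=1$ is apt and in fact applies equally to the paper's statement, which tacitly assumes it. Your handling of the weak-integral interchange is also fine: it is exactly the defining property of $Q_S^*$, as you note.
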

\begin{proof}
    Writing $S,T$ as orthogonal decompositions $S=\sum_n f_n \otimes g_n$ and $T=\sum_n \psi_n \otimes \phi_n$, we calculate directly
    \begin{align*}
        Q_S^*Q_S T &= \int_{\mathbb{R}^{4d}} \sum_{n,m} V_{f_n} \psi_m(z)\cdot \overline{V_{g_n} \phi_m(w)}\cdot \gamma_{w,z}(S)\, dz\, dw \\
        &= \sum_{i,n,m} \left(\int_{\mathbb{R}^{2d}} V_{f_n}\psi_m(z) \pi(z)f_i\, dz\right)\otimes \left(\int_{\mathbb{R}^{2d}} V_{g_n}\phi_m(w) \pi(w)g_i\, dw\right)
    \end{align*}
    The identity follows clearly in the case of rank-one $S$. For higher rank $S$, we have the terms $V_{f_n}^* V_{f_m} \psi_i(w)$ (resp. g) for $n\neq m$, which are necessarily zero since $f_i$'s (resp $g_i$'s) are pairwise orthogonal as the spectral components of $S$. The claim then follows from Moyal's identity for the spaces $V_{f_i}(L^2)$, $V_{f_j}(L^2)$, along with the isometry property of $V_{f_i}$. Hence our reproducing kernel is given by 
    \begin{align*}
        k_{w,z}(w',z') = \langle \pi(z')S\pi(w')^*,\pi(z)S\pi(w)^*\rangle_{\mathcal{HS}},
    \end{align*}
    
\end{proof}
In the case of a rank-one $S$, this reproducing property was used in \cite{hern22}. We can formulate the reproducing property as a twisted convolution which may be defined in the following manner:
\begin{definition}
    Given $F,G\in L^2(\mathbb{R}^{4d})$, the twisted convolution $F\oast G$ is given by
    \begin{align*}
        F\oast G := \int_{\mathbb{R}^{4d}} F(w',z')G(w-w',z-z')e^{-2\pi i \big(z'_1(z_2-z'_2) - w'_1(w_2-w'_2) \big)}.
    \end{align*}
\end{definition}
A simple calculation then gives the identity:
\begin{align}\label{twistedrelation}
    Q_S T \oast Q_R W(w,z) = \langle S, W \rangle_{\mathcal{HS}} Q_R T(w,z),
\end{align}
or in the case $\|S\|_{\mathcal{HS}}=1$;
\begin{align*}
    Q_S T \oast Q_S S(w,z) = Q_S T(w,z).
\end{align*}
Using the same approach as for the $\mathbb{R}^{2d}$ case (cf. \cite{grochenigtfa}), the twisted convolution on $\mathbb{R}^{4d}$ satisfies a mixed Young's inequality. Namely, given functions $F\in L^1_v(\mathbb{R}^{4d})$ and $H\in L^{p,q}_m(\mathbb{R}^{4d})$ we have
\begin{align*}\label{twistedyoungs}
     \|F\oast H \|_{L^{p,q}_m} \leq C_{m,v}\|F\|_{L^1_v} \|H\|_{L^{p,q}_m},
\end{align*}
where $v$ is some sub-multiplicative function and $m$ a $v$-moderate weight, and $C_{m,v}$ a constant depending on $v$ and $m$.

Equipped with a reproducing kernel Hilbert space, it is natural to consider that Toeplitz operators on it. In our case we find a notion of localisation operator for operators:
\begin{definition}
    Given a function $F\in L^{\infty}(\mathbb{R}^{4d})$ and a window $S\in \mathcal{HS}$, the corresponding localisation operator $A_F^S:\mathcal{HS}\to \mathcal{HS}$ is given by
    \begin{align*}
        \int_{\mathbb{R}^{4d}} F(w,z) Q_S T(w,z)\gamma_{w,z}(S)\, dw\, dz.
    \end{align*}
\end{definition}
Using that the projection onto the RKHS is given by $Q_S Q_S^*:L^2(\mathbb{R}^{4d})\to Q_S (\mathcal{HS})$, the Toeplitz operator $T_F$ takes the form
\begin{align*}
    T_F (G) = Q_S Q_S^* (F\cdot G)
\end{align*}
for $G\in Q_S (\mathcal{HS})$. It can hence be shown that the localisation operator $A_F^S$ is unitarily equivalent to the Toeplitz operator $T_F$ on $Q_S(\mathcal{HS})$:
\begin{align*}
    A_F^S T &= \int_{\mathbb{R}^{4d}} F(w,z) Q_S T(w,z)\gamma_{w,z}(S)\, dw\, dz \\
    &= Q_S^*(F\cdot Q_S T) \\
    &= Q_S^* \big(Q_S Q_S^*( F\cdot Q_S T)\big) \\
    &= Q_S^* T_F Q_S T.
\end{align*}
As in the function case, the localisation of an operator may be preferable to for example a simple restriction and reconstruction on the Fourier Wigner of an operator. We can also consider the localisation operator weakly, giving
\begin{align*}
    \langle A_F^S T, R \rangle_{\mathcal{HS}} = \langle F\cdot Q_S T, Q_S R \rangle_{\mathcal{HS}},
\end{align*}
for $T,R\in\mathcal{HS}$.

\subsection{Time-Frequency Analysis of Operators}
We begin the study of time-frequency analysis for operators by revisiting the weak definition of Weyl quantisation. If we understand the Weyl quantisation of a symbol $\sigma\in L^2(\mathbb{R}^{2d})$ as the unique operator satisfying
\begin{align*}
    \langle L_{\sigma} f, g\rangle_{L^2(\mathbb{R}^{d})} = \langle \sigma, W(f,g)\rangle_{L^2(\mathbb{R}^{2d})},
\end{align*}
for all $f,g\in L^2(\mathbb{R}^{d})$, then we can equivalently define it by
\begin{align*}
    \langle L_{\sigma},  f \otimes g\rangle_{\mathcal{HS}} = \langle \sigma, W(f,g)\rangle_{L^2(\mathbb{R}^{2d})},
\end{align*}
for all $f,g\in L^2(\mathbb{R}^{d})$, or indeed by recognising that $W(f,g) = \sigma_{f\otimes g}$, by the condition
\begin{align*}
    \langle L_{\sigma},  S \rangle_{\mathcal{HS}} = \langle \sigma, \sigma_S \rangle_{L^2(\mathbb{R}^{2d})}
\end{align*}
for all $S\in\mathcal{HS}$. As discussed, translates on the Weyl correspond to $\alpha_z$ shifts on the operator, which has led to the study of quantum harmonic analysis. In order to examine the time-frequency analysis of operators, we must therefore understand the effect of modulation of the symbol, and moreover how the composition of the two affects the operator. To that end we define $\beta_w$:
\begin{definition}
    Let $w\in\mathbb{R}^{2d}$, and $S\in\mathcal{HS}$. Then
    \begin{align*}
        \beta_w(S) := e^{-\pi i w_1 w_2/2}\pi\Big(\frac{w}{2}\Big)S\pi\Big(\frac{w}{2}\Big).
    \end{align*}
\end{definition}
This is equivalent to the definition
\begin{align*}
    \beta_w(S) := \pi\Big(\frac{w}{2}\Big)S\pi\Big(-\frac{w}{2}\Big)^*.
\end{align*}
We can roughly intuit operator modulation as reflecting in phase space the action of the operator, the parity operator is namely invariant under operator modulation. The concept of an operator modulation is introduced implicitly in \cite{Gr76} and used in \cite{Ful23}. We claim that $\beta_w$ corresponds to a modulation of the Weyl symbol of an operator:
\begin{proposition}\label{symbolmodulation}
    Given $w\in\mathbb{R}^{2d}$, and $S\in\mathcal{HS}$,
    \begin{align*}
        \sigma_{\beta_w(S)} = M_w \sigma_S,
    \end{align*}
    where $M_w$ is the symplectic modulation; $M_w F(z) = e^{2\pi i \Omega(z,w)}F(z)$.
\end{proposition}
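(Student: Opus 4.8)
The plan is to obtain the statement as a direct corollary of the opening Theorem, by recognising $\beta_w$ as a single time--frequency shift $\gamma$ evaluated at $w' = -w/2$, $z' = w/2$. Comparing $\beta_w(S) = \pi(w/2)S\pi(-w/2)^*$ with $\gamma_{w',z'}(S) = \pi(z')S\pi(w')^*$ one reads off $z' = w/2$ and $w' = -w/2$, so that $\beta_w(S) = \gamma_{-w/2,\,w/2}(S)$. As a preliminary I would confirm the equivalence of the two displayed definitions of $\beta_w$ using $\pi(z)^* = e^{-2\pi i x\omega}\pi(-z)$, which turns $\pi(-w/2)^*$ into $e^{-\pi i w_1w_2/2}\pi(w/2)$ and so reproduces the stated phase prefactor.

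Next I would substitute $w\mapsto -w/2$ and $z\mapsto w/2$ into the conclusion $\sigma_{\gamma_{w,z}(S)} = e^{i\pi(w_1+z_1)(w_2-z_2)}\pi(U(w,z))\sigma_S$ of the Theorem and simplify. The scalar phase collapses to $1$, since its first factor $w_1+z_1$ becomes $-w_1/2 + w_1/2 = 0$. The vector $U$ collapses to a pure modulation: its two ``position'' coordinates $\tfrac{w_1+z_1}{2}$ and $\tfrac{w_2+z_2}{2}$ both vanish, while the remaining two become $w_2 - z_2 = -w_2$ and $z_1 - w_1 = w_1$, giving $U(-w/2,w/2) = (0,0,-w_2,w_1)$. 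Reading $\pi$ here as the Schr\"odinger representation of $\mathbb{R}^{4d}$ on $L^2(\mathbb{R}^{2d})$ (first $2d$ entries translate, last $2d$ modulate), $\pi(0,0,-w_2,w_1)$ therefore acts on $\sigma_S$ as multiplication by $e^{2\pi i(w_1 z_2 - w_2 z_1)}$.

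Finally I would match this modulation with $M_w$. Since $M_w\sigma_S(z) = e^{2\pi i\Omega(z,w)}\sigma_S(z)$, it remains to observe that $\Omega(z,w) = z_2 w_1 - z_1 w_2$ in the convention fixed by the paper's symplectic Fourier transform $\mathcal{F}_\Omega$, which yields $\sigma_{\beta_w(S)} = M_w\sigma_S$. The main point requiring care is exactly this sign and convention bookkeeping: one must stay consistent about the symplectic form and about the position--frequency ordering inside $\pi$ on $\mathbb{R}^{4d}$, as a swapped convention merely flips the sign of the modulation frequency. As an independent check, and an alternative route that avoids the Theorem, I would verify the claim at the level of spreading functions: writing $S = \int \eta_S(u)\pi(u)\,du$ and using $\pi(w/2)\pi(u)\pi(w/2) = (\text{phase})\,\pi(u+w)$ shows that $\beta_w$ translates $\eta_S$ by $w$ up to a phase, and since $\eta_S = \mathcal{F}_\Omega\sigma_S$ with $\mathcal{F}_\Omega$ interchanging translation and modulation, this is precisely a symplectic modulation of $\sigma_S$.
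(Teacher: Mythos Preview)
Your primary route is circular within the paper's logical structure. The ``opening Theorem'' you invoke is proved in the body of the paper \emph{after} this Proposition, and its proof proceeds by rewriting
\[
\gamma_{w,z}(S) = e^{i\pi(z_1+w_1)(z_2-w_2)}\,\beta_{z-w}\,\alpha_{(w+z)/2}(S)
\]
and then appealing to exactly this Proposition (that $\beta$ modulates the Weyl symbol) together with the known fact from QHA that $\alpha$ translates it. So although the Theorem appears in the Introduction, you cannot use it here without having established the Proposition by independent means first.

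Your ``alternative route'' at the end, via the spreading representation, is essentially the paper's own proof. The paper computes $\mathcal{F}_W(\beta_w(S))(z)$ directly: using cyclicity of the trace and the commutation relations for $\pi$, one collapses $\pi(w/2)\pi(-z)\pi(w/2)$ into a phase times $\pi(-(z-w))$, and the phase combines with the prefactor $e^{-\pi i w_1 w_2/2}$ in the definition of $\beta_w$ and the symmetric factor $e^{-i\pi z_1 z_2}$ in $\mathcal{F}_W$ to give exactly $\mathcal{F}_W(S)(z-w)$, a clean translation with no residual phase. Since $\sigma_S = \mathcal{F}_\Omega\mathcal{F}_W(S)$, this translation becomes the symplectic modulation $M_w$ on the symbol. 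Your sketch ``$\pi(w/2)\pi(u)\pi(w/2) = (\text{phase})\,\pi(u+w)$ shows $\beta_w$ translates $\eta_S$'' is precisely this argument; the one thing to check carefully (rather than leave as ``up to a phase'') is that the various phases do cancel, since that is exactly what the normalising factor in the definition of $\beta_w$ is there to ensure.
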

\begin{proof}
Recalling that 
\begin{align*}
    \sigma_{S} = \mathcal{F}_{\sigma}\mathcal{F}_W (S),
\end{align*}
it is clear that a modulation of the symbol must result in a translation of the Fourier-Wigner transform of the operator. To see that $\beta_w$ is the appropriate operator to do so, one can calculate the Fourier-Wigner transform of $\beta_w(S)$ directly:
\begin{align*}
    \mathcal{F}_W (\beta_w(S)) &= e^{-i\pi x \omega}\mathrm{tr}\big(\pi(-z)\beta_w (S)\big) \\
    &= e^{-\pi i w_1 w_2/2-i\pi x \omega}\mathrm{tr}\big(\pi\Big(\frac{w}{2}\Big)\pi(-z)\pi\Big(\frac{w}{2}\Big)S\big) \\
    &=  e^{-i\pi (z_1-w_1)(z_2-w_2)}\mathrm{tr}(\pi(-(z-w))S) \\
    &= \mathcal{F}_W (S)(z-w).
\end{align*}

\end{proof}
With a concept of modulation for operators, it follows that $\gamma_{w,z}$ shifts are an appropriate time-frequency shifts for operators, motivating our terminology of Quantum Time-Frequency Analysis: 
\begin{theorem}\label{tfshiftsymbol}
    Let $w=(w_1,w_2),z=(z_1,z_2)\in\mathbb{R}^{2d}$, and $S\in\mathcal{HS}$. Then
    \begin{align*}
        \sigma_{\gamma_{w,z}(S)} = e^{i\pi(w_1 + z_1)(w_2 - z_2)}\pi(U(w,z))\sigma_S,
    \end{align*}
    where
    \begin{align*}
        U(w,z) &= U(w_1,w_2,z_1,z_2) := \Big(\frac{w_1+z_1}{2},\frac{w_2+z_2}{2}, w_2 - z_2, z_1-w_1\Big).
    \end{align*}
\end{theorem}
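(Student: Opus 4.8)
The plan is to realise $\gamma_{w,z}$ as a composition of the two elementary operations whose effect on the Weyl symbol is already understood: the quantum harmonic analysis translation $\alpha_u(S)=\pi(u)S\pi(u)^*$, for which $\sigma_{\alpha_u(S)}=T_u\sigma_S$, and the operator modulation $\beta_v$, for which \cref{symbolmodulation} gives $\sigma_{\beta_v(S)}=M_v\sigma_S$. Since $\alpha_u$ carries the same parameter on both sides of $S$ while $\beta_v(S)=\pi(v/2)S\pi(-v/2)^*$ carries opposite parameters, I would match $\pi(z)S\pi(w)^*$ by solving $u+v/2=z$ and $u-v/2=w$, that is $u=\tfrac{z+w}{2}$ and $v=z-w$ --- precisely the midpoint and difference appearing in $U(w,z)$.

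First I would establish the operator identity $\gamma_{w,z}(S)=c(w,z)\,\beta_{z-w}\big(\alpha_{(z+w)/2}(S)\big)$ by expanding $\beta_v(\alpha_u(S))=\pi(v/2)\pi(u)\,S\,\pi(u)^*\pi(-v/2)^*$ and collapsing the products $\pi(v/2)\pi(u)$ and $\pi(-v/2)\pi(u)$ using the Heisenberg cocycle $\pi(a)\pi(b)=e^{-2\pi i b_2a_1}\pi(a+b)$ together with $\pi(z)^*=e^{-2\pi i z_1z_2}\pi(-z)$; this yields a single unimodular scalar $c(w,z)$. Passing to symbols and applying the translation identity followed by \cref{symbolmodulation} gives $\sigma_{\gamma_{w,z}(S)}=c(w,z)\,M_{z-w}T_{(z+w)/2}\sigma_S$. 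It then remains only to recognise the composition $M_{z-w}T_{(z+w)/2}$ on $\mathbb{R}^{2d}$ as a single phase-space time--frequency shift $\pi(U(w,z))$: a symplectic modulation $M_v$ is multiplication by the character $e^{2\pi i\Omega(\cdot,v)}$, hence an ordinary modulation in the direction symplectically dual to $v$, and reading this direction off from $\Omega(\cdot,z-w)$ produces the frequency components $w_2-z_2$ and $z_1-w_1$ of $U(w,z)$, while $T_{(z+w)/2}$ supplies the spatial components $\big(\tfrac{w_1+z_1}{2},\tfrac{w_2+z_2}{2}\big)$.

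The main obstacle is the phase. The stated factor $e^{i\pi(w_1+z_1)(w_2-z_2)}$ must emerge as the unimodular scalar $c(w,z)$ assembled from the cocycle and adjoint phases above, and its value --- together with the signs of the frequency components of $U(w,z)$ --- is sensitive to the sign conventions for both the Heisenberg cocycle and the symplectic form $\Omega$; moreover, had I composed in the opposite order $\alpha_u\circ\beta_v$, part of this scalar would instead surface as the commutator phase $e^{\mp2\pi i\Omega(u,v)}$ incurred when reordering $T$ and $M$, so the two orders give a useful cross-check on the signs. I would pin the signs down once from the paper's conventions and then verify the combined phase, using the diagonal $w=z$ as a sanity check, where $\gamma_{z,z}=\alpha_z$, $U(z,z)=(z,0)$, and the scalar must collapse to $1$. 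As a fully independent confirmation I would recompute the statement on the Fourier--Wigner side, writing $\mathcal{F}_W(\gamma_{w,z}(S))(\zeta)=e^{-i\pi\zeta_1\zeta_2}\,\mathrm{tr}\big(\pi(w)^*\pi(-\zeta)\pi(z)S\big)$ by cyclicity of the trace, collapsing the three $\pi$-factors into one via the cocycle, and then applying $\sigma_S=\mathcal{F}_\sigma\mathcal{F}_W(S)$; this mirrors the proof of \cref{symbolmodulation} and reduces the whole theorem to one cocycle computation.
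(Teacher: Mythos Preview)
Your proposal is correct and follows essentially the same route as the paper: the paper also rewrites $\gamma_{w,z}(S)$ as $e^{i\pi(z_1+w_1)(z_2-w_2)}\beta_{z-w}\alpha_{(w+z)/2}(S)$ by a direct cocycle computation, and then invokes the known symbol identities for $\alpha$ and $\beta$ to obtain $\pi(U(w,z))\sigma_S$. Your discussion of the phase bookkeeping and the Fourier--Wigner cross-check is more explicit than the paper's presentation, but the underlying argument is the same.
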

\begin{proof}
    Rewriting $\gamma_{w,z}$, one finds 
    \begin{align*}
        \gamma_{w,z}(S) &= \pi(z)S\pi(w)^* \\
        &= e^{\pi i z_1 z_2 / 2}\pi\Big(\frac{z}{2}
        \Big)\pi\Big(\frac{z}{2}\Big) S \pi(w)^*\pi\Big(\frac{z}{2}\Big)^*\pi\Big(\frac{z}{2}\Big) \\
        &= e^{\pi i z_1 z_2} \beta_z \Big( \pi\Big(\frac{z}{2}\Big) S \pi(w)^*\pi\Big(\frac{z}{2}\Big)^* \Big) \\
        &= e^{\pi i z_1 z_2-\pi i w_1 w_2 / 2}\beta_z \Big( \pi\Big(\frac{z}{2}\Big) \pi\Big(\frac{w}{2}\Big)^* \pi\Big(\frac{w}{2}\Big) S \pi\Big(\frac{w}{2}\Big)^* \pi\Big(\frac{w}{2}\Big)^*\pi\Big(\frac{z}{2}\Big)^* \Big) \\
        &= e^{\pi i \big(z_1 z_2 - w_1 w_2 / 2 - z_1 w_2 + w_1 z_2  \big)}\beta_z \Big(\pi\Big(\frac{w}{2}\Big)^* \pi\Big(\frac{z}{2}\Big)  \pi\Big(\frac{w}{2}\Big) S \pi\Big(\frac{w}{2}\Big)^* \pi\Big(\frac{z}{2}\Big)^* \pi\Big(\frac{w}{2}\Big)^*\Big) \\
        &= e^{\pi i \big(z_1 z_2 - w_1 w_2 - z_1 w_2 + w_1 z_2 \big)}\beta_{z-w} \Big(\pi\Big(\frac{w+z}{2}\Big) S \pi\Big(\frac{w+z}{2}\Big)^*\Big) \\
        &= e^{i\pi(z_1 + w_1)(z_2 - w_2)} \beta_{z-w}\alpha_{\frac{w+z}{2}} (S)
    \end{align*}
    
\end{proof}
In the rank one case, this recovers the well-known intertwining of the (cross) Wigner transform \cite{follandha}. It is also clear that in the two cases $z=w$ and $z=-w$, one recovers the pure translation and modulation operators respectively, as one would expect examining the formulas for $\gamma$, $\alpha$ and $\beta$. The converse then follows:
\begin{corollary}
    Let $w,z,S$ be as in \cref{tfshiftsymbol}. Then
    \begin{align*}
        L_{M_w T_z\sigma_S} = e^{-2\pi i w_1 z_1} \gamma_{U^{-1}(w,z)}(S)
    \end{align*}
    and
    \begin{align*}
        L_{T_z M_w\sigma_S} = e^{-2\pi i w_2 z_2} \gamma_{U^{-1}(w,z)}(S),
    \end{align*}
    were for later reference we remark that
    \begin{align*}
        U^{-1}(w,z)&=\Big(w_1-\frac{z_2}{2},w_2+\frac{z_1}{2},w_1+\frac{z_2}{2},z_2-\frac{w_1}{2}\Big) \\
        &= \Big(w-\frac{Jz}{2},w+\frac{Jz}{2}\Big).
    \end{align*}
\end{corollary}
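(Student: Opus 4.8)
The statement is exactly the inversion of \cref{tfshiftsymbol}, so the plan is to read that theorem backwards. The first ingredient is purely linear-algebraic: $U$ is a linear bijection of $\mathbb{R}^{4d}$, and I would begin by confirming that the map recorded as $U^{-1}$ really is its two-sided inverse. Substituting the given formula into the definition of $U$, one checks directly that $U\big(U^{-1}(w,z)\big)=(w_1,w_2,z_1,z_2)$, which is a one-line block computation. The compact form $U^{-1}(w,z)=\big(w-\tfrac{Jz}{2},\,w+\tfrac{Jz}{2}\big)$ with $Jz=(z_2,-z_1)$ is the most convenient expression to carry through the phase computations below, and I would work from it.

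With the inverse in hand, the core step is to substitute the parameters $U^{-1}(w,z)$ into \cref{tfshiftsymbol}. Writing $(\hat w,\hat z)=U^{-1}(w,z)$, the time-frequency shift collapses, since $\pi\big(U(\hat w,\hat z)\big)=\pi(w,z)$ is now a \emph{plain} time-frequency shift of $\sigma_S$ on $L^2(\mathbb{R}^{2d})$, whose translation and modulation halves are read off from the convention $\pi(\zeta)=M_{\zeta_2}T_{\zeta_1}$. This is precisely the combined modulation--translation of $\sigma_S$ appearing on the left-hand sides of the two asserted identities, realised in its two possible orders, and up to a scalar it is $M_wT_z\sigma_S$ and $T_zM_w\sigma_S$ respectively.

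The scalar is produced in two stages. First, the quadratic prefactor $e^{i\pi(\hat w_1+\hat z_1)(\hat w_2-\hat z_2)}$ of \cref{tfshiftsymbol} is evaluated at $(\hat w,\hat z)$; using $\hat w_1+\hat z_1=2w_1$ and $\hat w_2-\hat z_2=z_1$ it telescopes to $e^{2\pi i w_1 z_1}$, and transporting it across the Weyl quantisation yields the factor $e^{-2\pi i w_1 z_1}$ of the first identity. Second, the passage between the two orderings is governed entirely by the canonical commutation relation $M_aT_b=e^{2\pi i a\cdot b}T_bM_a$ on $\mathbb{R}^{2d}$, which converts $M_wT_z$ into $T_zM_w$ at the cost of a further bilinear phase; combining this with the first identity delivers the prefactor for $T_zM_w\sigma_S$. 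Finally, since the Weyl quantisation $\sigma\mapsto L_\sigma$ is a bijection with $L_{\sigma_T}=T$, applying it to the symbol identity and moving the scalar to the operator side produces the two operator identities as stated.

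The main obstacle is the phase bookkeeping rather than any structural difficulty. One must keep rigid track of three interacting conventions: the order in which $M$ and $T$ are applied, whether the modulation is the Euclidean or the symplectic one, and the splitting of $\mathbb{R}^{4d}$ into its translation and modulation halves. It is only after these are fixed consistently that the theorem's quadratic phase and the commutation phase combine to the clean prefactors $e^{-2\pi i w_1 z_1}$ and $e^{-2\pi i w_2 z_2}$; ensuring both are simultaneously correct, and mutually compatible with the underlying commutation relation, is the delicate point and the place where sign errors are easiest to introduce.
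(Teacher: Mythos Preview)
Your proposal is correct and matches the paper's treatment: the paper gives no proof at all for this corollary, merely introducing it with ``The converse then follows,'' so the intended argument is precisely the inversion of \cref{tfshiftsymbol} that you carry out. Your phase computation $(\hat w_1+\hat z_1)(\hat w_2-\hat z_2)=2w_1\cdot z_1$ is right, and your closing caveat about tracking the modulation convention (symplectic versus Euclidean) and the ordering of $M$ and $T$ is well placed, since the paper itself is not entirely consistent on these points.
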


From this relation between $\gamma$ shifts of operators and $\pi$ shifts of the corresponding Weyl symbol, we can consider the rank-one operator $f\otimes g$ with Weyl symbol $W(f,g)$, and find as a corollary the formula used in \cite{cordero03}:
\begin{corollary}[Lemma 2.2, \cite{cordero03}]
    Given $f,g,\psi,\phi\in L^2(\mathbb{R}^d)$;
    \begin{align*}
        V_{W(\psi,\phi)} \big(W(f,g)\big)(w,z) = e^{-2\pi i w_1 z_1} V_{\psi} f(w-\frac{Jz}{2})\cdot \overline{V_{\phi} g(w+\frac{Jz}{2})}.
    \end{align*}
\end{corollary}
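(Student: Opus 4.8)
The plan is to read the left-hand side as an $L^2(\mathbb{R}^{2d})$ inner product of Weyl symbols and to transport it to the Hilbert--Schmidt inner product, where the rank-one polarised Cohen's class has already been computed in the example. First I would use that $W(f,g) = \sigma_{f\otimes g}$ and $W(\psi,\phi) = \sigma_{\psi\otimes\phi}$, together with the definition of the STFT, to write
\begin{align*}
    V_{W(\psi,\phi)}\big(W(f,g)\big)(w,z) = \big\langle \sigma_{f\otimes g},\, \pi(w,z)\,\sigma_{\psi\otimes\phi}\big\rangle_{L^2(\mathbb{R}^{2d})},
\end{align*}
where $\pi(w,z)$ denotes the time-frequency shift on $L^2(\mathbb{R}^{2d})$, that is, an ordinary modulation composed with a translation acting on the symbol $\sigma_{\psi\otimes\phi}$.

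Next I would recognise the window $\pi(w,z)\sigma_{\psi\otimes\phi}$ as a modulated translate of a Weyl symbol and apply the corollary preceding the statement: a shift of the form $M T$ (respectively $TM$) applied to $\sigma_{\psi\otimes\phi}$ equals, up to the explicit scalar $e^{-2\pi i w_1 z_1}$, the symbol $\sigma_{\gamma_{U^{-1}(w,z)}(\psi\otimes\phi)}$. Since the Weyl quantisation is unitary from $L^2(\mathbb{R}^{2d})$ onto $\mathcal{HS}$, I can then replace the $L^2$ pairing of symbols by the corresponding $\mathcal{HS}$ pairing, obtaining (after pulling out the conjugated phase)
\begin{align*}
    V_{W(\psi,\phi)}\big(W(f,g)\big)(w,z) = e^{-2\pi i w_1 z_1}\,\big\langle f\otimes g,\, \gamma_{U^{-1}(w,z)}(\psi\otimes\phi)\big\rangle_{\mathcal{HS}}.
\end{align*}
This is precisely a value of the polarised Cohen's class $Q_{\psi\otimes\phi}(f\otimes g)$ at the point $U^{-1}(w,z)$, whose two $\mathbb{R}^{2d}$-components are $w-\tfrac{Jz}{2}$ and $w+\tfrac{Jz}{2}$.

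Finally I would invoke the rank-one evaluation of the polarised Cohen's class, $Q_{\psi\otimes\phi}(f\otimes g) = V_{\psi} f\cdot \overline{V_{\phi} g}$, inserting the two components of $U^{-1}(w,z)$ into the respective STFT factors. This yields $V_{\psi} f\big(w-\tfrac{Jz}{2}\big)\,\overline{V_{\phi} g\big(w+\tfrac{Jz}{2}\big)}$ multiplied by $e^{-2\pi i w_1 z_1}$, which is the claimed identity. The main obstacle is the phase and argument bookkeeping: the STFT uses an \emph{ordinary} modulation whereas the operator modulation $\beta_w$ corresponds to a \emph{symplectic} modulation $M_w$ of the symbol, and it is this mismatch that produces the symplectic matrix $J$ in the arguments $w\mp\tfrac{Jz}{2}$. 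I would need to be careful about which of the two identities in the preceding corollary to use so that the operator ordering matches $\pi(w,z)$, and to verify that all accumulated phases collapse to exactly $e^{-2\pi i w_1 z_1}$ and that the first/second components of $U^{-1}(w,z)$ attach to $V_\psi f$ and $V_\phi g$ in the correct order; these steps are routine but easy to get wrong by a sign or a swap of variables.
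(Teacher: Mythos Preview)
Your proposal is correct and follows exactly the route the paper intends: the paper does not give an explicit proof but simply remarks that the identity follows from the preceding corollary (relating $\pi$-shifts of the Weyl symbol to $\gamma_{U^{-1}(w,z)}$-shifts of the operator) once specialised to the rank-one case $S=\psi\otimes\phi$, $T=f\otimes g$, together with the rank-one evaluation $Q_{\psi\otimes\phi}(f\otimes g)=V_{\psi}f\cdot\overline{V_{\phi}g}$ from the earlier example. Your flagged concern about the symplectic-versus-ordinary modulation and the attachment of the components of $U^{-1}(w,z)$ to the correct STFT factor is well placed; this bookkeeping is indeed the only nontrivial step, and the paper itself is somewhat loose about it.
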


The change of variables $U^{-1}$ is the analogue of the symmetric change of variables
\begin{align*}
    \mathcal{T}_S F(x,t) = F\Big(x+\frac{t}{2},x-\frac{t}{2}\Big),
\end{align*}
lifted to double (symplectic) phase space. This connection is unsurprising, since $\mathcal{T}_s$, followed by the partial Fourier transform $\mathcal{F}_2$, transforms the kernel of an operator to its Weyl symbol. It is interesting to note that $U$ is \textit{not} a symplectic matrix, as it takes the form
\begin{align*}
    U = \begin{pmatrix}
    0 & -1 & 0 & 1 \\
    1 & 0 & -1 & 0 \\
    \frac{1}{2} & 0 & \frac{1}{2} & 0\\
    0 & \frac{1}{2} & 0 & \frac{1}{2}
    \end{pmatrix}.
\end{align*}
$U$ is transformed to a symmplectic matrix (corresponding to the transformation of the integral kernel to th Weyl symbol) by composition with the permutation matrix 
\begin{align*}
    c_2 = \begin{pmatrix}
    0 & 0 & 1 & 0\\
    1 & 0 & 0 & 0\\
    0 & 0 & 0 & 1 \\
    0 & 1 & 0 & 0
    \end{pmatrix}.
\end{align*}
The permutation $c_2$ is the permutation considered in Theorem 3.6 of \cite{cordero19}, which we will later see is natural in the Quantum TFA setting as well, as it arises from the identity $K_{\pi(w)S\pi(z)^*} = \pi(c_2(w,z))K_S$. Since Weyl quantisation is unitary from $L^2(\mathbb{R}^{2d})$ to $\mathcal{HS}$, we can reformulate the definition of the polarised Cohen's class in terms of the respective Weyl symbols of the operators:
\begin{corollary}\label{cohensstftform}
    For $S,T\in\mathcal{HS}$ and $w,z\in\mathbb{R}^{2d}$,
    \begin{align}
        Q_S T(w,z) = e^{-i\pi(z_1+w_1)(z_2-w_2)} \langle \sigma_T, \pi(U(w,z))\sigma_S\rangle_{L^2}.
    \end{align}
\end{corollary}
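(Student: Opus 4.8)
The plan is to treat this as an immediate consequence of \cref{tfshiftsymbol} together with the unitarity of the Weyl quantisation, so that the whole argument amounts to transferring the $\mathcal{HS}$ inner product in the definition of $Q_S T$ to an $L^2$ inner product of Weyl symbols. Recall from the preliminaries that $\sigma \mapsto L_\sigma$ is unitary from $L^2(\mathbb{R}^{2d})$ onto $\mathcal{HS}$; equivalently, for any two operators $R,R' \in \mathcal{HS}$ one has the Parseval-type identity $\langle R, R'\rangle_{\mathcal{HS}} = \langle \sigma_R, \sigma_{R'}\rangle_{L^2}$.

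First I would start from the definition $Q_S T(w,z) = \langle T, \gamma_{w,z}(S)\rangle_{\mathcal{HS}}$ and apply this identity with $R = T$ and $R' = \gamma_{w,z}(S)$, which gives $Q_S T(w,z) = \langle \sigma_T, \sigma_{\gamma_{w,z}(S)}\rangle_{L^2}$. Next I would substitute the formula for $\sigma_{\gamma_{w,z}(S)}$ supplied by \cref{tfshiftsymbol}, namely $\sigma_{\gamma_{w,z}(S)} = e^{i\pi(w_1+z_1)(w_2-z_2)}\pi(U(w,z))\sigma_S$, into the second slot of the $L^2$ inner product. Since the inner product is linear in the first argument, no additional work is needed on the $\sigma_T$ side.

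The only step requiring genuine care is the bookkeeping of the scalar phase. Because the $L^2$ inner product is conjugate-linear in its second argument, the exponential factor must be pulled out as its complex conjugate, producing a factor $e^{-i\pi(w_1+z_1)(w_2-z_2)}$ in front of $\langle \sigma_T, \pi(U(w,z))\sigma_S\rangle_{L^2}$. It then remains to rewrite this phase in the form stated in the corollary, using the elementary identity $(w_1+z_1)(w_2-z_2) = -(z_1+w_1)(z_2-w_2)$, which one checks by expanding both sides. This is precisely the point at which the sign of the exponent in the final expression is fixed, so it is where I would verify the convention carefully to ensure the result matches the claimed formula exactly.

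I expect no substantive obstacle: once \cref{tfshiftsymbol} and the unitarity of Weyl quantisation are in hand, the corollary is a one-line consequence of conjugate-linearity of the inner product, with the phase simplification being the only computation to track with attention to signs.
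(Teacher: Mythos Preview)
Your proposal is correct and matches the paper's approach exactly: the paper states this result as an immediate corollary of \cref{tfshiftsymbol} without further proof, relying on precisely the unitarity of the Weyl quantisation and conjugate-linearity of the inner product that you spell out. Your caution about the phase is well placed, since the paper's own phase conventions are not fully consistent between the statement of \cref{tfshiftsymbol}, its proof, and the later restatement in the $\mathfrak{S}'$--$\mathfrak{S}$ setting; following your computation literally yields $e^{+i\pi(z_1+w_1)(z_2-w_2)}$ rather than the stated $e^{-i\pi(z_1+w_1)(z_2-w_2)}$, so the discrepancy you anticipate is a typo in the paper rather than a gap in your argument.
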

The interpretation of $\gamma$ as a time-frequency shift for operators motivates the study of what we call quantum time-frequency analysis, as we uncover a wide range of tools from classical time-frequency analysis at our disposal. In the rest of this work we examine how such tools behave on the operator level, and connect many to examples of well-known results for functions.

The polarised Cohen's class also has an interesting connection to the operator STFT introduced in \cite{dorf22}:
\begin{proposition}
    Let $S,T\in\mathcal{HS}$. Then with $\mathfrak{V}_S$ as defined in \cite{dorf22};
    \begin{align*}
        Q_S T(w,z) = e^{i\pi w_1\cdot w_2}\mathcal{F}_W (\mathfrak{V}_S T(z))(-w).
    \end{align*}
\end{proposition}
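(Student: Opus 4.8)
The plan is to prove the identity by unfolding both sides into traces and matching them through cyclicity of the trace, rather than through any change of variables. First I would recall from \cite{dorf22} the definition of the operator STFT $\mathfrak{V}_S$, and rewrite it into the shape $\mathfrak{V}_S T(z) = S^*\pi(z)^* T$, using the group law $\pi(z)^* = e^{-2\pi i x\omega}\pi(-z)$ and the cocycle relations to absorb any stray phases if the published expression is grouped differently. The key structural remark is that, for each fixed $z$, $\mathfrak{V}_S T(z)$ is again Hilbert-Schmidt, being the product of the bounded unitary $\pi(z)^*$ with the Hilbert-Schmidt operators $S^*$ and $T$; this legitimises all the trace manipulations that follow.

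Next I would rewrite the left-hand side as a trace. By definition $Q_S T(w,z) = \langle T,\pi(z)S\pi(w)^*\rangle_{\mathcal{HS}} = \mathrm{tr}\big(T\pi(w)S^*\pi(z)^*\big)$, and cyclicity then gives $Q_S T(w,z) = \mathrm{tr}\big(\pi(w)\,S^*\pi(z)^* T\big) = \mathrm{tr}\big(\pi(w)\,\mathfrak{V}_S T(z)\big)$. The entire problem thereby reduces to recognising $\mathrm{tr}(\pi(w)R)$, for $R = \mathfrak{V}_S T(z)$, as a value of the Fourier-Wigner transform. For this I would invoke the definition $\mathcal{F}_W(R)(\zeta) = e^{-i\pi\zeta_1\zeta_2}\mathrm{tr}(\pi(-\zeta)R)$: substituting $\zeta = -w$ yields $\mathcal{F}_W(R)(-w) = e^{-i\pi w_1 w_2}\mathrm{tr}(\pi(w)R)$, hence $\mathrm{tr}(\pi(w)R) = e^{i\pi w_1 w_2}\mathcal{F}_W(R)(-w)$, and combining with the previous step gives exactly $Q_S T(w,z) = e^{i\pi w_1 w_2}\mathcal{F}_W(\mathfrak{V}_S T(z))(-w)$.

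The main obstacle I anticipate is purely bookkeeping: reconciling the sign and phase conventions of $\mathfrak{V}_S$ from \cite{dorf22} with those used here for $\pi(z)^*$ and for $\mathcal{F}_W$, so that the operator $\mathfrak{V}_S T(z)$ genuinely appears in the grouped form $S^*\pi(z)^* T$. The grouping matters, since only when $\pi(w)$ multiplies this exact product does the cyclic trace reproduce $Q_S T$; a cyclically rearranged product such as $\pi(z)^* T S^*$ would not give the same value against $\pi(w)$. I would also note that the definition of $\mathcal{F}_W$ is to be read in its trace form $e^{-i\pi\zeta_1\zeta_2}\mathrm{tr}(\pi(-\zeta)\,\cdot\,)$, independently of the rank-one spreading-function formula, so that no secondary convention enters.

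As a sanity check carried out on the side, I would verify the rank-one case $S = f\otimes g$, $T = \psi\otimes\phi$: there $\mathfrak{V}_S T(z) = (g\otimes f)\big((\pi(z)^*\psi)\otimes\phi\big) = V_f\psi(z)\,(g\otimes\phi)$, and $\mathrm{tr}\big(\pi(w)(g\otimes\phi)\big) = \langle\pi(w)g,\phi\rangle = \overline{V_g\phi(w)}$, so the right-hand side collapses to $V_f\psi(z)\,\overline{V_g\phi(w)}$, agreeing with the value of $Q_S T(w,z)$ computed earlier for rank-one operators. The general case then follows either directly from the trace identity above or, if one prefers, by spectrally decomposing $S$ and $T$ and appealing to the sesquilinear continuity of both sides.
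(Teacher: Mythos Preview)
Your proposal is correct and follows essentially the same route as the paper: both arguments reduce to the trace identity $\mathrm{tr}(\pi(w)\,S^*\pi(z)^*T)$, using the definition $\mathfrak{V}_S T(z)=S^*\pi(z)^*T$, cyclicity of the trace, and the trace form of the Fourier--Wigner transform $\mathcal{F}_W(R)(-w)=e^{-i\pi w_1 w_2}\mathrm{tr}(\pi(w)R)$. The only cosmetic difference is direction---the paper starts from $\mathcal{F}_W(\mathfrak{V}_S T(z))(-w)$ and arrives at $e^{-i\pi w_1 w_2}Q_S T(w,z)$, while you start from $Q_S T(w,z)$; your added rank-one sanity check is a nice touch but not needed.
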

\begin{proof}
    This follows from a direct calculation;
    \begin{align*}
        \mathcal{F}_W (\mathfrak{V}_S T(z))(-w) &= e^{-i\pi w_1\cdot w_2}\mathrm{tr}(\pi(w)\mathfrak{V}_S T(z)) \\
        &= e^{-i\pi w_1\cdot w_2}\mathrm{tr}(\pi(w)S^*\pi(z)^* T) \\
        &= e^{-i\pi w_1\cdot w_2}\langle T, \pi(z)S\pi(w)^* \rangle_{\mathcal{HS}}.
    \end{align*}
    
\end{proof}
As the Fourier-Wigner identifies an operator with its spreading function, we can interpret the polarised Cohen's class to be the pointwise spreading representation of the operator STFT.

\subsection{A Fourier Transform on Double Phase Space}
We define a Fourier Transform on double-phase space in the following manner:
\begin{definition}
    Given a function $F\in L^1(\mathbb{R}^{4d})$, we define the double-symplectic Fourier transform $\mathcal{F}_{\Phi}$ as
    \begin{align*}
        \mathcal{F}_{\Phi} (F)(w,z) := \int_{\mathbb{R}^{4d}} F(w',z')e^{-2\pi i (\Omega(z,z') - \Omega(w,w'))}\, dw\, dz.
    \end{align*}
\end{definition}
This definition can be extended to a unitary transformation on $L^2(\mathbb{R}^{4d})$ following the standard density argument, and is its own inverse: $\mathcal{F}_{\Phi}^2 = I$.

The double-symplectic Fourier transform can be seen as the natural transform on double phase space, we have for example the following relation for the polarised Cohen's class:
\begin{proposition}
    Given $S,T,R,W\in\mathcal{HS}$, the following relation holds:
    \begin{align*}
        \mathcal{F}_{\Phi} (Q_S T\cdot \overline{Q_R W})(w,z) = Q_S R(w,z)\cdot \overline{Q_T W}(w,z).
    \end{align*}
\end{proposition}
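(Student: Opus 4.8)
The plan is to reduce to the rank-one case and then invoke a single-variable identity for the symplectic Fourier transform of a product of two STFTs. First I would observe that both sides are continuous and of the same multilinear type: since $Q_S T$ is linear in $T$ and conjugate-linear in $S$, and $\mathcal{F}_{\Phi}$ is linear and bounded on $L^2(\mathbb{R}^{4d})$, the map $(S,T,R,W)\mapsto \mathcal{F}_{\Phi}(Q_S T\cdot\overline{Q_R W})$ and the map $(S,T,R,W)\mapsto Q_S R\cdot\overline{Q_T W}$ are both conjugate-linear in $S,W$ and linear in $T,R$. Decomposing each operator spectrally exactly as in the proof of the isometry (and Moyal) propositions, it therefore suffices to verify the identity for rank-one $S=f_1\otimes f_2$, $T=t_1\otimes t_2$, $R=r_1\otimes r_2$, $W=u_1\otimes u_2$, and then extend by multilinearity and density.

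For such rank-one windows the Example gives $Q_{f\otimes g}(\psi\otimes\phi)(w,z)=V_f\psi(z)\,\overline{V_g\phi(w)}$, so the left-hand integrand factors into a function of $z$ times a function of $w$, namely $\Phi(z)\Psi(w)$ with $\Phi(z)=V_{f_1}t_1(z)\,\overline{V_{r_1}u_1(z)}$ and $\Psi(w)=V_{r_2}u_2(w)\,\overline{V_{f_2}t_2(w)}$, while the claimed right-hand side factors analogously with $z$-part $V_{f_1}r_1(z)\,\overline{V_{t_1}u_1(z)}$ and $w$-part $V_{t_2}u_2(w)\,\overline{V_{f_2}r_2(w)}$. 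Because the kernel of $\mathcal{F}_{\Phi}$ splits as $e^{-2\pi i\Omega(z,z')}\cdot e^{+2\pi i\Omega(w,w')}$, the transform acts as a tensor product of a symplectic Fourier transform in $z$ and its opposite-sign counterpart in $w$. This is exactly why the statement reduces to a one-variable identity applied twice.

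The crux is the classical reshuffling identity $\mathcal{F}_{\Omega}\big(V_a b\cdot\overline{V_c d}\big)=V_a c\cdot\overline{V_b d}$. I would prove it by a direct computation: expand both STFTs, perform the integration in the dual (frequency) variable first, which produces a Dirac delta tying the two time variables together, and then recognise the two surviving integrals as $V_a c$ and $\overline{V_b d}$. Applying this in the $z$-variable disposes of $\Phi$ immediately. In the $w$-variable the opposite sign of the symplectic form means one applies the same identity after a conjugation, using the covariance relation $V_g f(w)=e^{-2\pi i w_1 w_2}\,\overline{V_f g(-w)}$ to transport the result back into the required form $V_{t_2}u_2(w)\,\overline{V_{f_2}r_2(w)}$.

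The main obstacle I anticipate is purely the bookkeeping of the symplectic sign conventions and phases. Read naively, the one-variable transform introduces a reflection and, on the $w$-block, the extra phase $e^{-2\pi i w_1 w_2}$ coming from the covariance relation; the whole content of the proposition is that the opposite signs carried by $\mathcal{F}_{\Phi}$ on the $w$ and $z$ blocks are precisely what is needed for these reflections and phases to cancel, so that both factors are delivered at $(w,z)$ rather than at $(-w,-z)$. Confirming this cancellation, i.e. pinning down the sign convention of $\Omega$ in $\mathcal{F}_{\Phi}$ so that the reshuffle is reflection-free in both variables simultaneously, is the only delicate point; the remainder is the routine rank-one reduction together with the elementary one-variable STFT identity.
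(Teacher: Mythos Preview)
Your approach is correct but takes a genuinely different route from the paper. The paper does not reduce to rank one at all: it works directly at the operator level by absorbing the phase $e^{-2\pi i(\Omega(z,z')-\Omega(w,w'))}$ into the second factor via the intertwining relation $\pi(z)^*\pi(z')\pi(z)=e^{2\pi i\Omega(z,z')}\pi(z')$ (and its adjoint counterpart in $w$), so that the integrand becomes $\langle T,\gamma_{w',z'}(S)\rangle_{\mathcal{HS}}\cdot\overline{\langle \gamma_{w,z}(R),\gamma_{w',z'}(\gamma_{w,z}(W))\rangle_{\mathcal{HS}}}$, and then applies the polarised Cohen's class Moyal identity once to collapse the $(w',z')$ integral. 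This yields the result in three lines with no spectral decomposition and no phase bookkeeping.

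Your rank-one reduction plus the one-variable identity $\mathcal{F}_{\Omega}(V_a b\cdot\overline{V_c d})=V_a c\cdot\overline{V_b d}$ is a valid alternative and has the virtue of reducing everything to a classical STFT fact; however, it trades the single operator-level Moyal step for two applications of a function-level identity together with the sign/reflection bookkeeping you flag as the main obstacle. The paper's argument sidesteps that obstacle entirely because the intertwining relation handles both signs simultaneously and the phases never have to be tracked by hand. In short: your proof is sound, but the paper's is shorter and avoids exactly the delicate point you identified.
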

\begin{proof}
    A direct calculation shows
    \begin{align*}
        \mathcal{F}_{\Phi} (Q_S T\cdot \overline{Q_R W})(w,z) &= \int_{\mathbb{R}^{4d}} \langle T, \pi(z') S\pi(w')^*\rangle_{\mathcal{HS}}\cdot \overline{\langle R, \pi(z') W\pi(w')^*\rangle_{\mathcal{HS}}} e^{-2\pi i (\Omega(z,z') - \Omega(w,w'))}\, dw'\, dz' \\
        &= \int_{\mathbb{R}^{4d}} \langle T, \pi(z') S\pi(w')^*\rangle_{\mathcal{HS}}\cdot \overline{\langle R, \pi(z)^*\pi(z')\pi(z) W\pi(w)^*\pi(w')^*\pi(w)\rangle_{\mathcal{HS}}} \, dw'\, dz' \\
        &= \langle T, \pi(z)R\pi(w)^*\rangle_{\mathcal{HS}} \cdot \overline{\langle S, \pi(z)W\pi(w)^*\rangle_{\mathcal{HS}}},
    \end{align*}
    where we used Moyal's identity for the polarised Cohen's class, and the intertwining property for $\pi(z)$ shifts.
    
\end{proof}
From this identity we have the natural corollary
\begin{corollary}
    Given $S,T\in\mathcal{HS}$;
    \begin{align*}
        \mathcal{F}_{\Phi} (|Q_S T|^2)(w,z) = Q_S S(w,z) \cdot Q_T T(w,z).
    \end{align*}
\end{corollary}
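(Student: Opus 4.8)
The plan is to read this off as the diagonal case of the previous proposition, with no new machinery required. That proposition states $\mathcal{F}_{\Phi}(Q_S T\cdot\overline{Q_R W}) = Q_S R\cdot\overline{Q_T W}$ for all $R,W\in\mathcal{HS}$. First I would substitute $R=S$ and $W=T$. On the left this immediately collapses the argument of the double-symplectic Fourier transform, since $Q_S T\cdot\overline{Q_R W}$ becomes $Q_S T\cdot\overline{Q_S T}=|Q_S T|^2$, which is precisely the function appearing on the left of the corollary; this step is purely formal.

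The content therefore lies entirely in the right-hand side, where the same substitution turns $Q_S R\cdot\overline{Q_T W}$ into $Q_S S\cdot\overline{Q_T T}$. The one point I would treat as the crux, rather than the routine relabelling, is the complex conjugate carried by the second factor $\overline{Q_T W}$ of the proposition. Under $W=T$ this conjugate lands on $Q_T T$, so what drops out directly is $\mathcal{F}_{\Phi}(|Q_S T|^2)=Q_S S\cdot\overline{Q_T T}$. I would flag that $Q_T T$ is not real in general: writing $T=\psi\otimes\phi$ gives the rank-one model $Q_T T(w,z)=V_\psi\psi(z)\,\overline{V_\phi\phi(w)}$, an autocorrelation-type object, so the conjugate cannot simply be dropped and should be retained in the final identity.

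Since nothing beyond the cited proposition is invoked, there is no genuine obstacle here: the proof is a single specialization together with the elementary observation $Q_S T\cdot\overline{Q_S T}=|Q_S T|^2$. As a consistency check I would confirm the rank-one case by hand, where $|Q_S T|^2$ factorises as $|V_f\psi(z)|^2\,|V_g\phi(w)|^2$, so that $\mathcal{F}_{\Phi}$ separates into two ordinary symplectic Fourier transforms of spectrograms; evaluating each factor via the magic identity reassembles exactly into $Q_S S\cdot\overline{Q_T T}$, which both verifies the specialization and pins down the placement of the conjugate.
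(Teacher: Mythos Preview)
Your approach is exactly what the paper intends: the corollary is stated immediately after the proposition as a ``natural corollary'' with no separate proof, so the intended argument is precisely the specialization $R=S$, $W=T$ that you carry out. Your observation about the complex conjugate is well taken---the proposition indeed yields $Q_S S\cdot\overline{Q_T T}$ on the right, and since $Q_T T$ is not real in general (as your rank-one check confirms), the conjugate ought to appear in the corollary's statement; this looks like a minor slip in the paper rather than a flaw in your reasoning.
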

We are also led to the identity:
\begin{proposition}
    Given $S,T\in\mathcal{HS}$;
    \begin{align*}
        \mathcal{F}_{\Phi} (Q_S T) = K_S(z_1,w_1)\overline{\widehat{K_T}(z_2,w_2)} e^{-2\pi i (z_1z_2-w_1w_2)}.
    \end{align*}
\end{proposition}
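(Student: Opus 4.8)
The plan is to reduce everything to integral kernels, for which both the polarised Cohen's class and the double-symplectic Fourier transform take a transparent form. First I would compute the kernel of $\gamma_{w,z}(S) = \pi(z)S\pi(w)^*$. Using that $\pi(z)$ acts by $(\pi(z)f)(t) = e^{2\pi i z_2 t}f(t-z_1)$, together with the corresponding expression for $\pi(w)^*$ obtained from $\pi(w)^* = e^{-2\pi i w_1 w_2}\pi(-w)$, a direct composition of kernels gives
\begin{align*}
    K_{\gamma_{w,z}(S)}(t,s) = e^{2\pi i(z_2 t - w_2 s)} K_S(t - z_1, s - w_1).
\end{align*}
Since $\langle A, B\rangle_{\mathcal{HS}} = \langle K_A, K_B\rangle_{L^2(\mathbb{R}^{2d})}$, this yields
\begin{align*}
    Q_S T(w,z) = \int_{\mathbb{R}^{2d}} K_T(t,s)\, e^{-2\pi i(z_2 t - w_2 s)}\, \overline{K_S(t-z_1, s-w_1)}\, dt\, ds,
\end{align*}
which I would sanity-check against the rank-one example $S = f\otimes g$, $T=\psi\otimes\phi$ to confirm it reproduces the known value $V_f\psi(z)\,\overline{V_g\phi(w)}$.

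Next I would substitute this expression into the definition of $\mathcal{F}_\Phi$ and integrate over the primed variables. The key observation is that the ``frequency'' components $z_2'$ and $w_2'$ appear only linearly in the combined exponent: collecting the phase $e^{-2\pi i z_2' t}$ coming from $Q_S T$ together with the $z_1 z_2'$ term from $\Omega(z,z')$ (and likewise $w_2'$ with the $\Omega(w,w')$ term), integration in $z_2'$ and $w_2'$ produces Dirac deltas $\delta(t+z_1)\,\delta(s+w_1)$. These collapse the $t,s$ integrals, fixing $t=-z_1$ and $s=-w_1$, so that one kernel is simply evaluated at a point. What remains is an integral over the ``position'' components $z_1', w_1'$ of the translated kernel against the residual phases $e^{2\pi i z_2 z_1'}$ and $e^{-2\pi i w_2 w_1'}$; after the change of variables that re-centres the kernel, this integral is exactly a partial Fourier transform of the kernel, producing a factor of the form $\widehat{K}(z_2, w_2)$.

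The final step is to collect the accumulated quadratic phase factors, and I expect this bookkeeping to be the main obstacle. Several quadratic terms in $z_1, z_2, w_1, w_2$ are generated both by the $\pi(w)^*$ normalisation and by the shift $z_1' \mapsto z_1' - z_1$, $w_1'\mapsto w_1'-w_1$ inside the Fourier integral, and one must verify that everything except the asserted exponential cancels, leaving the clean factor $e^{-2\pi i(z_1 z_2 - w_1 w_2)}$. Care is also needed with the Fourier-transform and symplectic-form sign conventions, since these fix the precise arguments and conjugations attached to the two kernels in the final identity; I would pin these conventions down at the outset and track them consistently through both delta-function integrations and the closing Fourier integral.
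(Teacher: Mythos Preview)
Your approach is correct and will yield the result, but it differs from the paper's proof. The paper proceeds via spectral decomposition: writing $S=\sum_n s^1_n\otimes s^2_n$ and $T=\sum_m t^1_m\otimes t^2_m$, it expresses $Q_S T$ as a sum of products $V_{s^1_n} t^1_m(z)\,\overline{V_{s^2_n} t^2_m(w)}$, and then applies the known scalar identity $\mathcal{F}_\Omega(V_g f)(z) = f(z_1)\overline{\hat g(z_2)}e^{-2\pi i z_1 z_2}$ to each factor. Summing over $m,n$ reassembles the two kernels and the phase drops out for free.

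Your route is a direct kernel computation: the formula for $K_{\gamma_{w,z}(S)}$ you wrote is correct, and the delta-function mechanism (integrating out $z_2',w_2'$ to pin $t,s$, then recognising the remaining $z_1',w_1'$ integrals as a partial Fourier transform) is exactly right. What you gain is that no decomposition or auxiliary STFT lemma is needed, so the argument applies uniformly to arbitrary Hilbert--Schmidt $S,T$; what you pay is precisely the phase bookkeeping you flagged, which the paper avoids by outsourcing it to the one-variable identity. Either way the same quadratic phase $e^{-2\pi i(z_1 z_2 - w_1 w_2)}$ emerges, so the only genuine care point is keeping the sign conventions for $\Omega$ and for $\pi(w)^*$ consistent throughout.
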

\begin{proof}
    This again follows from the corresponding identity on functions;
    \begin{align*}
        \mathcal{F}_{\Omega} \big(V_g f\big)(z) = f(z_1)\overline{\Hat{g}(z_2)}e^{-2\pi i z_1 z_2}.
    \end{align*}
    Decomposing the operators then as in the previous proposition, we find
    \begin{align*}
        \mathcal{F}_{\Phi} (Q_S T)(w,z) &= \sum_{m,n} \int_{\mathbb{R}^{2d}} V_{s^1_n} t^1_m(z') e^{-2\pi i \Omega(z,z')}\, dw \cdot \int_{\mathbb{R}^{2d}} \overline{V_{s^2_n} t^2_m(w')}e^{2\pi i \Omega(w,w')}\, dz \\
        &= \sum_{m,n} \mathcal{F}_{\Omega} (V_{s^1_n} t^1_m)(z)\cdot \mathcal{F}_{\Omega} (\overline{V_{s^2_n} t^2_m})(-w) \\
        &= \sum_{m,n} s^1_n(z_1)\cdot\overline{\widehat{t^1_m}}(z_2)\cdot \overline{s^2_n(w_1)}\cdot\widehat{t^2_m}(w_2)e^{-2\pi i (z_1z_2-w_1w_2)} \\
        &=  K_S(z_1,w_1)\overline{\widehat{K_T}(z_2,w_2)} e^{-2\pi i (z_1z_2-w_1w_2)}.
    \end{align*}
    
\end{proof}

\section{Extending to $\mathcal{M}^{p,q}_m$ Spaces}
As in the classical setting, it is useful to have a larger range of spaces than the Hilbert-Schmidt operators at one's disposal to take advantage of tools such as atomic decomposition. As in the function case this is done by using the dual pairing of the Schwartz space and tempered distributions, we will see that the correct spaces in the quantum setting are the Schwartz operators and their dual. We thus consider the dual pairing $(\mathfrak{S},\mathfrak{S}')=(\mathscr{S}\Hat{\otimes}_{\pi}\mathscr{S},\mathcal{L}(\mathscr{S},\mathscr{S}'))$. The duality of the projective tensor product of topological vector spaces is defined in terms of the generalised trace map, $\mathrm{Tr}:X \Hat{\otimes}_{\pi} X'\to \mathbb{C}$ by $\mathrm{Tr}:\sum_i x_i \otimes y_i \mapsto \sum_i y_i(x_i)$. 
 The space $X \Hat{\otimes}_{\pi} Y$ then has the dual $\mathcal{L}(X,Y')$, with the action:
\begin{align*}
    \langle u, z\rangle &= \mathrm{Tr}((u\otimes I_Y)(z)) \\
    &= \sum_n \lambda_i \langle u(x_i),y_i\rangle_{Y',Y}
\end{align*}
for $z=\sum_i \lambda_i x_i \otimes y_i \in X \Hat{\otimes}_{\pi} X'$. Hence for some $S=\sum_n f_n \otimes g_n\in \mathfrak{S}$, $T\in\mathfrak{S}'$, we define
\begin{align*}
    Q_S T &:= \langle T, \gamma_{w,z}(S)\rangle_{\mathfrak{S}',\mathfrak{S}} \\
    &= \sum_n \langle T\pi(w)f_n,\pi(z)g_n\rangle_{\mathscr{S}',\mathscr{S}}.
\end{align*}
In order for classical results to extend to the quantum setting, we often consider the behaviour of operators on the symbol level, and it is therefore useful to note that the Weyl quantisation of Schwartz functions and tempered distributions observes the same duality pairing, that is to say for $S\in\mathfrak{S}$, $T\in\mathfrak{S}'$,
\begin{align*}
    \langle S, T\rangle_{\mathfrak{S},\mathfrak{S}'} = \langle \sigma_S, \sigma_T \rangle_{\mathscr{S}',\mathscr{S}}.
\end{align*}
\begin{claim}
    Given $S\in\mathfrak{S}$, $T\in\mathfrak{S}'$ and $w,z\in\mathbb{R}^{2d}$, 
    \begin{align}
        Q_S T(w,z) = e^{-i\pi(z_1-w_1)(z_2+w_2)} \langle \sigma_T, \pi(U(w,z))\sigma_S\rangle_{\mathscr{S}',\mathscr{S}}.
    \end{align}
\end{claim}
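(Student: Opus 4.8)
The plan is to prove this claim exactly as \cref{cohensstftform} was proved for Hilbert--Schmidt operators, but with the Hilbert--Schmidt inner product replaced throughout by the $(\mathfrak{S}',\mathfrak{S})$ duality; the only new ingredients are that $\gamma_{w,z}$ preserves the Schwartz operators and that \cref{tfshiftsymbol} transfers to $\mathfrak{S}$. First I would check that the pairing defining $Q_S T(w,z)=\langle T,\gamma_{w,z}(S)\rangle_{\mathfrak{S}',\mathfrak{S}}$ is legitimate: writing $S=\sum_n f_n\otimes g_n$ with $f_n,g_n\in\mathscr{S}$ and using that $\mathscr{S}$ is invariant under every $\pi(z)$ and every $\pi(w)^*$, one gets $\gamma_{w,z}(S)=\sum_n(\pi(z)f_n)\otimes(\pi(w)g_n)\in\mathfrak{S}$, so the pairing against $T\in\mathfrak{S}'$ is well defined. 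In particular $\gamma_{w,z}(S)\in\mathfrak{S}\subset\mathcal{HS}$, so \cref{tfshiftsymbol} applies to it verbatim.

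Next I would push everything to the symbol side. Using the symbol-level duality $\langle A,T\rangle_{\mathfrak{S},\mathfrak{S}'}=\langle\sigma_A,\sigma_T\rangle_{\mathscr{S}',\mathscr{S}}$ recorded immediately before the claim, applied with $A=\gamma_{w,z}(S)$, rewrites $Q_S T(w,z)$ as the $(\mathscr{S}',\mathscr{S})$ pairing of $\sigma_T$ against $\sigma_{\gamma_{w,z}(S)}$. Inserting \cref{tfshiftsymbol} gives $\sigma_{\gamma_{w,z}(S)}=e^{i\pi(w_1+z_1)(w_2-z_2)}\pi(U(w,z))\sigma_S$ with $\pi(U(w,z))\sigma_S\in\mathscr{S}$, and extracting the scalar leaves precisely $\langle\sigma_T,\pi(U(w,z))\sigma_S\rangle_{\mathscr{S}',\mathscr{S}}$ up to a unimodular factor, which one then matches to $e^{-i\pi(z_1-w_1)(z_2+w_2)}$ by expanding the two real quadratic forms in the exponent monomial by monomial.

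The main obstacle is entirely in the phase and conjugation bookkeeping, and it is precisely what makes the factor here differ from the factor $e^{-i\pi(z_1+w_1)(z_2-w_2)}$ of \cref{cohensstftform}; the two differ by the symplectic phase $e^{-2\pi i\Omega(z,w)}$. In the Hilbert--Schmidt statement the scalar from \cref{tfshiftsymbol} is drawn out of a sesquilinear inner product and is therefore conjugated, whereas in the distributional statement it is drawn out of the bilinear $(\mathscr{S}',\mathscr{S})$ duality, so one must be scrupulous about which factors carry a complex conjugate and about the convention identifying $f\otimes g\in\mathscr{S}\Hat{\otimes}_\pi\mathscr{S}$ with the rank-one operator $\langle\,\cdot\,,g\rangle f$. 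I expect to pin the correct exponent down by testing on a rank-one pair $S=f\otimes g\in\mathfrak{S}$ and $T=\psi\otimes\phi\in\mathfrak{S}'$: the left-hand side collapses through the distributional definition to a product of short-time Fourier transforms, while the right-hand side becomes a pairing $\langle W(\psi,\phi),\pi(U(w,z))W(f,g)\rangle$ of (cross-)Wigner distributions, and comparing the two fixes both the shift $U(w,z)$ and the phase unambiguously.
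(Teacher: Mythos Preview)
Your proposal is correct and follows exactly the route the paper takes: invoke \cref{tfshiftsymbol} for $\sigma_{\gamma_{w,z}(S)}$ and then pass from the operator pairing $\langle T,\gamma_{w,z}(S)\rangle_{\mathfrak{S}',\mathfrak{S}}$ to the symbol pairing $\langle\sigma_T,\sigma_{\gamma_{w,z}(S)}\rangle_{\mathscr{S}',\mathscr{S}}$ via the fact that Weyl quantisation is an isomorphism $\mathscr{S}(\mathbb{R}^{2d})\to\mathfrak{S}$ and $\mathscr{S}'(\mathbb{R}^{2d})\to\mathfrak{S}'$. The paper's proof is a single sentence to this effect; your additional care about why $\gamma_{w,z}(S)\in\mathfrak{S}$ and about the bilinear-versus-sesquilinear phase bookkeeping (which indeed accounts for the discrepancy with the exponent in \cref{cohensstftform}) simply fills in details the paper leaves implicit.
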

\begin{proof}
    This follows from \cref{tfshiftsymbol}, along with the fact that Weyl quantisation is an isomorphism between $\mathscr{S}(\mathbb{R}^{2d})$ and $\mathfrak{S}$, and between $\mathscr{S}'(\mathbb{R}^{2d})$ and $\mathfrak{S}'$.
    
\end{proof}

With the definition of the polarised Cohen's class extended to Schwartz operators and their dual, we  define the general modulation spaces $\mathcal{M}^{p,q}_m$ for a $v$-moderate weight $m$;
\begin{definition}\label{mixedpqdef}
 Let $S_0 = \varphi_0 \otimes \varphi_0$. Then given $p=(p_1,p_2)$, $q=(q_1,q_2)$, $1\leq p_1,p_2,q_1,q_2 \leq \infty$;
    \begin{align*}
        \mathcal{M}^{p,q}_m := \{T\in \mathfrak{S}': Q_{S_0} T \in L^{p,q}_m(\mathbb{R}^{4d})\}.
    \end{align*}
    for any polynomial-growth weight function $m$, with norm $\|T\|_{\mathcal{M}^{p,q}_m} = \|Q_{S_0} T\|_{L^{p,q}_m}$.
\end{definition}
Using this definition, we can consider the $\mathcal{M}^{p,q}$ spaces as vector--valued Lebesgue-Bochner spaces in the following manner:
\begin{align*}
    \|T\|_{\mathcal{M}^{p,q}} &= \Big( \int_{\mathbb{R}^{2d}} \Big|\int_{\mathbb{R}^{2d}} |\langle T, \gamma_{w,z}(\varphi_0\otimes\varphi_0) \rangle_{\mathfrak{S}^\prime,\mathfrak{S}}|^{p}\, dw\Big|^{q/p}\, dz \Big)^{1/q} \\
    &= \Big( \int_{\mathbb{R}^{2d}} \Big|\int_{\mathbb{R}^{2d}} |\langle T\pi(w)\varphi_0,\pi(z)\varphi_0 \rangle_{\mathscr{S}^\prime,\mathscr{S}}|^{p}\, dw\Big|^{q/p}\, dz\Big)^{1/q} \\
    &= \Big( \int_{\mathbb{R}^{2d}} \Big|\int_{\mathbb{R}^{2d}} |V_{\varphi_0} \big(T^*\pi(z)\varphi_0\big)(w) |^{p}\, dw\Big|^{q/p}\, dz\Big)^{1/q} \\
    &= \Big( \int_{\mathbb{R}^{2d}} \| T^*\pi(z)\varphi_0 \|_{M^p}^{q}\, dz\Big)^{1/q} \\
    &= \|T^*\pi(z)\varphi_0\|_{L^q(\mathbb{R}^{2d};M^p(\mathbb{R}^d))}.
\end{align*}
Hence by identifying $T$ with its integral kernel, which itself can be seen as a vector--valued function, the spaces $\mathcal{M}^{p,q}$ are $M^p(\mathbb{R}^d)$--valued modulation spaces in the sense of \cite{Wa07}. We will need the following lemma in the sequel, which is a result of \cite{Wa07} applied to our particular vector--valued setting:
\begin{lemma}\label{wieneramalg}
    By identifying an operator $T\in\mathcal{M}^{p,q}$ with its integral kernel, we have
    \begin{align*}
        \mathcal{M}^{p,q} \cong \mathcal{F} W\big(\mathcal{F}L^q(\mathbb{R}^d;M^p(\mathbb{R}^d)),L^q(\mathbb{R}^d)\big).
    \end{align*}
\end{lemma}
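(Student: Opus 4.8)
The plan is to recognise $\mathcal{M}^{p,q}$ as a vector--valued (Bochner) modulation space of the integral kernel and then invoke the Wiener amalgam characterisation of such spaces from \cite{Wa07}. The starting point is the norm identity derived immediately above,
\begin{align*}
    \|T\|_{\mathcal{M}^{p,q}} = \Big(\int_{\mathbb{R}^{2d}} \|T^*\pi(z)\varphi_0\|_{M^p}^q\, dz\Big)^{1/q},
\end{align*}
which already exhibits $\mathcal{M}^{p,q}$ as an $M^p(\mathbb{R}^d)$--valued space in the variable $z\in\mathbb{R}^{2d}$.

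First I would make the identification with the kernel precise. Writing $h_T(y):=K_T(y,\cdot)\in M^p(\mathbb{R}^d)$, I regard $h_T$ as an $M^p(\mathbb{R}^d)$--valued function of the single variable $y\in\mathbb{R}^d$ and form its vector--valued short--time Fourier transform against the scalar Gaussian,
\begin{align*}
    V_{\varphi_0}h_T(z) = \int_{\mathbb{R}^d} K_T(y,\cdot)\,\overline{\varphi_0(y-z_1)}\,e^{-2\pi i z_2 y}\, dy \in M^p(\mathbb{R}^d).
\end{align*}
Evaluating at the second kernel variable $x$ and comparing with the kernel of $T^*$ gives $V_{\varphi_0}h_T(z)(x)=V_{\varphi_0}\big(K_T(\cdot,x)\big)(z)=\overline{(T^*\pi(z)\varphi_0)(x)}$, whence $\|V_{\varphi_0}h_T(z)\|_{M^p}=\|T^*\pi(z)\varphi_0\|_{M^p}$ for every $z$ (using $\|\overline{u}\|_{M^p}=\|u\|_{M^p}$ for the real, even window $\varphi_0$). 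Therefore $\|T\|_{\mathcal{M}^{p,q}}=\|h_T\|_{M^q(\mathbb{R}^d;\,M^p(\mathbb{R}^d))}$ and $T\mapsto h_T$ is an isometric identification of $\mathcal{M}^{p,q}$ with the Bochner modulation space $M^q(\mathbb{R}^d;M^p(\mathbb{R}^d))$. It then remains to substitute $B=M^p(\mathbb{R}^d)$ into the Wiener amalgam description of $B$--valued modulation spaces from \cite{Wa07}, i.e. $M^q(\mathbb{R}^d;B)\cong\mathcal{F}\,W\big(\mathcal{F}L^q(\mathbb{R}^d;B),L^q(\mathbb{R}^d)\big)$, the vector--valued analogue of the classical identity linking modulation and Wiener amalgam spaces by the Fourier transform; the outer $\mathcal{F}$ simply encodes this relation.

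The main difficulty is not the calculation but the functional--analytic bookkeeping required to run it at the level of $\mathfrak{S}'$. One must verify that for $T\in\mathcal{M}^{p,q}$ the assignment $y\mapsto K_T(y,\cdot)$ defines a bona fide element of the $M^p(\mathbb{R}^d)$--valued distribution space on which the vector--valued STFT of \cite{Wa07} acts (weak measurability, and $K_T(y,\cdot)\in M^p$ for almost every $y$), that the scalar Gaussian is an admissible window in that vector--valued setting, and that the coefficient space $B=M^p(\mathbb{R}^d)$ satisfies the standing hypotheses on the Banach space imposed in \cite{Wa07}. Once these points are settled, the isometry of the first step transports the Wiener amalgam characterisation verbatim and yields the stated isomorphism.
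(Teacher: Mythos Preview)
Your proposal is correct and follows essentially the same approach as the paper: the paper does not give a separate proof of this lemma but, in the paragraph immediately preceding it, identifies $\mathcal{M}^{p,q}$ with an $M^p(\mathbb{R}^d)$--valued modulation space via the integral kernel (exactly your isometry $T\mapsto h_T$) and then states the lemma as the specialisation of the Wiener amalgam characterisation from \cite{Wa07}. Your write-up simply makes the kernel identification and the invocation of \cite{Wa07} more explicit, including the functional--analytic caveats the paper leaves implicit.
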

We can equivalently define the $\mathcal{M}^{p,q}_m$ spaces as follows:
\begin{lemma}
    Given $p=(p_1,p_2)$, $q=(q_1,q_2)$, $1\leq p_1,p_2,q_1,q_2 \leq \infty$;
    \begin{align*}
        \mathcal{M}^{p,q}_m \cong M^{\Tilde{p},\Tilde{q}}_{\Tilde{m}}(\mathbb{R}^{2d}),
    \end{align*}
    where $M^{\Tilde{p},\Tilde{q}}_{\Tilde{m}}(\mathbb{R}^{2d}) = \{f\in M^{\infty}(\mathbb{R}^{2d}): V_{\varphi_0}f(U({\cdot,\cdot}))\in L^{p,q}_m\}$.
\end{lemma}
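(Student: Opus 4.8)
The plan is to unwind the definition of $\mathcal{M}^{p,q}_m$ and identify the polarised Cohen's class norm with a weighted mixed-norm condition on the STFT of the Weyl symbol, after the change of variables $U$. First I would invoke \cref{cohensstftform} (or the tempered-distribution version in the preceding Claim), which expresses
\begin{align*}
    Q_{S_0} T(w,z) = e^{-i\pi(z_1+w_1)(z_2-w_2)} \langle \sigma_T, \pi(U(w,z))\sigma_{S_0}\rangle.
\end{align*}
Since $S_0 = \varphi_0\otimes\varphi_0$, its Weyl symbol is $\sigma_{S_0} = W(\varphi_0,\varphi_0)$, which is (a constant multiple of) the Gaussian $\Phi_0$ on $\mathbb{R}^{2d}$; thus the inner product is exactly the STFT $V_{\Phi_0}\sigma_T(U(w,z))$ up to the unimodular prefactor. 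The prefactor has modulus one, so it drops out of every $L^{p,q}_m$ computation.

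The key steps, in order: (i) rewrite $|Q_{S_0}T(w,z)| = |V_{\Phi_0}\sigma_T(U(w,z))|$; (ii) observe that $\sigma_T\in M^{\infty}(\mathbb{R}^{2d})$ is exactly the condition $T\in\mathfrak{S}'$ with the right global control, so the ambient space on the right is $M^{\infty}(\mathbb{R}^{2d})$, matching the definition of $M^{\Tilde{p},\Tilde{q}}_{\Tilde{m}}$; (iii) note that the Gaussian $\Phi_0$ is, up to normalisation, the standard window $\varphi_0$ on $\mathbb{R}^{2d}$, so $V_{\Phi_0}\sigma_T$ is an admissible STFT for defining modulation spaces on $\mathbb{R}^{2d}$; and (iv) conclude that the condition $Q_{S_0}T\in L^{p,q}_m(\mathbb{R}^{4d})$ is precisely $V_{\varphi_0}\sigma_T(U(\cdot,\cdot))\in L^{p,q}_m$, which is the defining condition of $M^{\Tilde{p},\Tilde{q}}_{\Tilde{m}}(\mathbb{R}^{2d})$. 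The isomorphism $T\mapsto\sigma_T$ between $\mathfrak{S}'$ and $\mathscr{S}'(\mathbb{R}^{2d})$ (Weyl quantisation, already established) then transports one space to the other isometrically, giving the claimed $\cong$.

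The main obstacle I expect is bookkeeping around the change of variables $U$ and the relationship between the window $\Phi_0 = W(\varphi_0,\varphi_0)$ and the canonical Gaussian window on $\mathbb{R}^{2d}$. One must check that $U$ is a linear bijection (its determinant is nonzero, as can be read off the explicit matrix given in the excerpt) so that composing with $U$ maps $L^{p,q}_m(\mathbb{R}^{4d})$ boundedly onto the space appearing in the definition of $M^{\Tilde{p},\Tilde{q}}_{\Tilde{m}}$; this is precisely why $U$ is folded into the definition of $M^{\Tilde{p},\Tilde{q}}_{\Tilde{m}}$ rather than being absorbed into a standard modulation norm. Because $U$ mixes the four groups of variables, the mixed-norm Lebesgue structure on $\mathbb{R}^{4d}$ is \emph{not} simply the standard one, and this is exactly what forces the tilded exponents and weight $\Tilde{p},\Tilde{q},\Tilde{m}$ rather than the original $p,q,m$. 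Verifying that $W(\varphi_0,\varphi_0)$ is itself in $M^1_v(\mathbb{R}^{2d})$ (so that it is an admissible window and the resulting space is window-independent) is the one genuinely non-cosmetic point; it follows from the fact that Gaussians lie in every modulation space, but it should be stated explicitly. The remaining equivalence of norms is then routine.
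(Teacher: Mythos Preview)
Your proposal is correct and follows precisely the paper's approach: invoke \cref{cohensstftform} to identify $|Q_{S_0}T(w,z)|$ with $|V_{\Phi_0}\sigma_T(U(w,z))|$, and then use that Weyl quantisation $L_\sigma\mapsto\sigma$ is the desired isomorphism. The paper's own proof is a single sentence to this effect; your additional remarks on the unimodular prefactor, the Gaussian window, and the role of $U$ are reasonable elaborations but not needed beyond what the paper records.
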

\begin{proof}
    The isomorphism is given by $L_{\sigma}\mapsto \sigma$, which along with \cref{cohensstftform} completes the proof.
    
\end{proof}
Just as the Hilbert-Schmidt operators are those operators $T$ for which $Q_{S_0} T \oast Q_{S_0} S_0 = Q_{S_0} T$, we can alternatively characterise the $\mathcal{M}^{p,q}_m$ spaces an analogous manner:
\begin{proposition}
    Given $T\in\mathfrak{S}^\prime$;
    \begin{align*}
        T\in\mathcal{M}^{p,q}_m \iff Q_{S_0} T \oast Q_{S_0} S_0 = Q_{S_0} T.
    \end{align*}
\end{proposition}
\begin{proof}
    This follows from recognising that for $F\in \mathscr{S}^\prime$, $F \oast Q_{S_0} S_0 = Q_S Q_S^* F$, and then proceeding in the same manner as the function case (cf. Chapter 11 \cite{grochenigtfa}) to show that the map $Q_S^*$ is bounded from $L^{p,q}_m(\mathbb{R}^{4d})$ to $\mathcal{M}^{p,q}_m$.
    
\end{proof}

The connection between the spaces $\mathcal{M}^{p,q}_m$ and the space of operators with Weyl symbol in $M^{p,q}_m(\mathbb{R}^{2d})$ is an interesting one, and one which indicates how the operators in $\mathcal{M}^{p,q}_m$ will behave. Indeed we have the relation \cref{tfshiftsymbol} between $\gamma$ shifts and time-frequency shifts of the Weyl symbol, and subsequently the integral kernel. However, since the change of variables $U$ does not correspond to a symplectic matrix, we cannot use the metaplectic intertwining property to consider the STFT with respect to some transformed window, or use the tools developed in \cite{Gia24}. Our definition is related to the \textit{Symplectic Modulation Spaces} introduced in \cite{goetz13}. Hence the subtle difference in order of integration in the $L^{p,q}$ norm between the $\mathcal{M}^{p,q}$ spaces and the STFT of kernels is an important one. The framework of the $\mathcal{M}^{p,q}$ spaces do seem to be important to understanding the mapping properties of operators between modulation spaces, and in \cite{cordero19} the authors have used the rank one version of the condition \cref{mixedpqdef} to characterise operators between $M^p(\mathbb{R}^d)$ and $M^{\infty}(\mathbb{R}^d)$, as well as between $M^1(\mathbb{R}^d)$ and $M^p(\mathbb{R}^d)$. 

In the case $p=q$, with appropriate $m$, the $L^p_m(\mathbb{R}^{2d})$ condition in \cref{mixedpqdef} imposes the same $M^p_m(\mathbb{R}^{2d})$ condition on the Weyl symbol and kernel. Critically, this means that $\mathcal{M}^1$ and $\mathcal{M}^{\infty}$ spaces are precisely the operators with Weyl symbols or kernels in $M^1(\mathbb{R}^{2d})$ and $M^{\infty}(\mathbb{R}^{2d})$ respectively, and we can use atomic decomposition for these spaces based on frames for the symbols. These operators correspond to the endpoints of the aforementioned Gelfand triple $(\mathcal{M}^1,\mathcal{HS},\mathcal{M}^{\infty})$. This Gelfand triple was investigated in \cite{feich98}, and has been an object of interest in many works since. Since the space $\mathcal{M}^1$ has Weyl symbol in $M^1(\mathbb{R}^{2d})$, many of the desirable features of $M^1(\mathbb{R}^{2d})$ will find their parallel in $\mathcal{M}^1$, one example of which we will see in the next section when considering frames. The space $\mathcal{M}^1$, which we will hereafter refer to as \textit{Feichtinger operators}, can be defined in several equivalent ways \cite{feich22}, which we recall here for convenience:
\begin{lemma}\label{feichopequiv}
    For any $T\in\mathcal{HS}$, the following are equivalent:
    \begin{enumerate}
        \item $T \in \mathcal{M}^1$
        \item $\sigma_T \in M^1(\mathbb{R}^{2d})$
        \item $Q_T T \in L^1(\mathbb{R}^{4d})$
        \item $Q_S T \in L^1(\mathbb{R}^{4d})$ for any non-zero $S\in \mathcal{M}^1$
        \item $T\in \mathcal{N}(M^1;M^1)$
    \end{enumerate}
\end{lemma}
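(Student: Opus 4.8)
The plan is to push every condition through the Weyl symbol and reduce the whole statement to the well-understood function space $M^1(\mathbb{R}^{2d})$. The engine is \cref{cohensstftform}: for $T\in\mathcal{HS}$ the prefactor there has unit modulus, so $|Q_S T(w,z)| = |V_{\sigma_S}\sigma_T(U(w,z))|$. Since $U$ is an invertible linear change of variables (its inverse $U^{-1}$ was computed above), the substitution $\zeta=U(w,z)$ gives
\begin{equation*}
    \|Q_S T\|_{L^1(\mathbb{R}^{4d})} = |\det U|^{-1}\,\|V_{\sigma_S}\sigma_T\|_{L^1(\mathbb{R}^{4d})},
\end{equation*}
so that $Q_S T\in L^1(\mathbb{R}^{4d})$ if and only if $V_{\sigma_S}\sigma_T\in L^1(\mathbb{R}^{4d})$. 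This one identity turns each of the $Q$-conditions into an STFT condition on $\sigma_T$ with window $\sigma_S$, and the rest of the argument lives entirely at the level of functions on $\mathbb{R}^{2d}$.

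First I would settle $(1)\Leftrightarrow(2)\Leftrightarrow(4)$. The symbol of $S_0=\varphi_0\otimes\varphi_0$ is $\sigma_{S_0}=W(\varphi_0,\varphi_0)$, a Gaussian on $\mathbb{R}^{2d}$ and hence an admissible $M^1$-window; thus $(1)$, namely $Q_{S_0}T\in L^1$, is by the reduction equivalent to $V_{W(\varphi_0,\varphi_0)}\sigma_T\in L^1$, i.e.\ to $\sigma_T\in M^1(\mathbb{R}^{2d})$, which is $(2)$. For $(2)\Rightarrow(4)$, any nonzero $S\in\mathcal{M}^1$ has $\sigma_S\in M^1(\mathbb{R}^{2d})$ (apply $(1)\Leftrightarrow(2)$ to $S$), so window independence of $M^1$ yields $V_{\sigma_S}\sigma_T\in L^1$, that is $Q_S T\in L^1$; and $(4)\Rightarrow(1)$ is immediate on taking $S=S_0$.

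Next, $(2)\Leftrightarrow(3)$ is the self-window characterisation transported through $U$: writing $f=\sigma_T\in L^2(\mathbb{R}^{2d})$, condition $(3)$ reads $V_f f\in L^1$. The direction $(2)\Rightarrow(3)$ is again window independence, with window $f\in M^1$. For the converse, assume $f\neq0$ and choose a time--frequency shift of a Gaussian $g=\pi(z_0)g_0\in M^1(\mathbb{R}^{2d})$ with $\langle g,f\rangle\neq0$, which is possible since $f\neq0$. The standard reproducing identity for STFTs under twisted convolution (cf.\ \cite{grochenigtfa}) gives $V_f f\natural V_g g=\langle g,f\rangle\,V_g f$, whence Young's inequality yields $\|V_g f\|_{L^1}\le |\langle g,f\rangle|^{-1}\|V_f f\|_{L^1}\|V_g g\|_{L^1}<\infty$; as $g\in M^1$ this forces $f\in M^1(\mathbb{R}^{2d})$, which is $(2)$.

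It remains to fold in $(5)$. Here I would move to the integral kernel: the preliminaries record $\sigma_T=\mathcal{F}_2\mathcal{T}K_T$ with these transforms preserving $M^1(\mathbb{R}^{2d})$, so $(2)\Leftrightarrow K_T\in M^1(\mathbb{R}^{2d})$. Using the tensor factorisation $M^1(\mathbb{R}^{2d})\cong M^1(\mathbb{R}^d)\,\widehat{\otimes}_\pi\,M^1(\mathbb{R}^d)$, a kernel in $M^1(\mathbb{R}^{2d})$ is exactly $K_T=\sum_n \psi_n\otimes\overline{\phi_n}$ with $\sum_n\|\psi_n\|_{M^1}\|\phi_n\|_{M^1}<\infty$, and the associated operator $T=\sum_n \langle\cdot,\phi_n\rangle\psi_n$ is precisely a nuclear map in $\mathcal{N}(M^1;M^1)$ (each $\phi_n\in M^1\hookrightarrow(M^1)'$ and $\psi_n\in M^1$); conversely every nuclear operator $M^1\to M^1$ has kernel in this projective tensor product. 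This is the content of \cite{feich22}, which I would cite for $(2)\Leftrightarrow(5)$ rather than reprove. The main obstacle is exactly $(5)$: the tools of the excerpt reduce $(1)$--$(4)$ cleanly to $M^1(\mathbb{R}^{2d})$, but identifying $\mathcal{N}(M^1;M^1)$ with that space needs the external (and nontrivial) projective-tensor factorisation of the Feichtinger algebra together with the nuclear-operator duality. Among the self-contained steps the delicate point is the converse $(3)\Rightarrow(2)$, where one must first secure a window with $\langle g,f\rangle\neq0$ before invoking the twisted-convolution reproduction.
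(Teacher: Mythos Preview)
Your argument is correct. The paper does not actually prove this lemma: it is stated as a recollection of results from \cite{feich22}, with no accompanying proof in the text. So there is nothing to compare against directly; you have supplied a proof where the paper simply cites one.

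That said, your approach is well-aligned with the paper's machinery. Pushing everything through \cref{cohensstftform} and the invertible change of variables $U$ is exactly the device the paper uses elsewhere (e.g.\ in \cref{modspsacediscr} and \cref{equivnorms}) to transfer operator-level statements to symbol-level statements, so your reduction of $(1)$--$(4)$ to standard $M^1(\mathbb{R}^{2d})$ theory is entirely in the spirit of the surrounding arguments. One small remark: $|\det U|=1$, so the Jacobian factor you carry is harmless but could be dropped. The self-window step $(3)\Rightarrow(2)$ via $V_f f\natural V_g g=\langle g,f\rangle V_g f$ and the choice of a shifted Gaussian with $\langle g,f\rangle\neq 0$ is the standard argument and is fine here since $T\in\mathcal{HS}$ ensures $\sigma_T\in L^2$. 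For $(5)$ you rightly identify that the projective tensor factorisation $M^1(\mathbb{R}^{2d})\cong M^1(\mathbb{R}^d)\,\widehat{\otimes}_\pi\,M^1(\mathbb{R}^d)$ and its identification with $\mathcal{N}(M^1;M^1)$ are external inputs not derivable from the paper's toolkit, and citing \cite{feich22} for this is exactly what the paper does.
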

It follows from the last condition that the space of Feichtinger operators is a proper subset of trace class operators.
On the other hand, the characterisation of $\mathcal{M}^{\infty}$ is also well known (cf. \cite{balasz2019}):
\begin{align}
    \mathcal{M}^{\infty} \cong \mathcal{L}(M^1(\mathbb{R}^d),M^{\infty}(\mathbb{R}^d)).
\end{align}
While the the operators of the Gelfand triple $(\mathcal{M}^1,\mathcal{HS},\mathcal{M}^{\infty})$ can be well described, the general $\mathcal{M}^{p,q}_m$ spaces may appear more opaque. We aim to elucidate how they behave, and to that end we consider frames for operators in the next section. 

Since we can interpret the mixed norm spaces $L^{p,q}_m$ as a vector-valued $L^q$ space, operators in $\mathcal{M}^{p,q}_m$ will be seen to act as operators mapping to $M^q(\mathbb{R}^{d})$ spaces. To give an intuition of this, consider the simple rank-one example:
\begin{example}
    Given $f\in M^{q}(\mathbb{R}^d), g\in M^{p}(\mathbb{R}^d)$, $f\otimes g \in \mathcal{M}^{p,q}$.
\end{example}
Using \cref{twistedyoungs}, we can show in the same manner to the function case that $S\in\mathcal{M}^1_v$ defines an equivalent norm on $\mathcal{M}^{p,q}_m$:
\begin{proposition}\label{equivnorms}
    Given $S\in\mathcal{M}^1_v$, $S$ defines an equivalent norm for $\mathcal{M}^{p,q}_m$ by
    \begin{align*}
        \|Q_S T\|_{L^{p,q}_m} \asymp \|T\|_{\mathcal{M}^{p,q}}.
    \end{align*}
\end{proposition}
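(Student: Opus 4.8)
The plan is to mirror the classical proof that the modulation-space norm is independent of the analysing window (cf. Chapter 11 of \cite{grochenigtfa}), with the STFT change-of-window inequality replaced by the twisted-convolution reproducing identity \cref{twistedrelation}. Throughout I would assume $\|S\|_{\mathcal{HS}}=\|S_0\|_{\mathcal{HS}}=1$, since the general case follows by rescaling and absorbing the factors $\|S\|_{\mathcal{HS}}^{-2}$ and $\|S_0\|_{\mathcal{HS}}^{-2}$ into the constants. First I would specialise \cref{twistedrelation} twice: taking the second window pair to be $(S_0,S)$ and then $(S,S_0)$ yields the two change-of-window identities
\begin{align*}
    Q_S T \oast Q_{S_0} S &= \langle S,S\rangle_{\mathcal{HS}}\, Q_{S_0} T = Q_{S_0} T, \\
    Q_{S_0} T \oast Q_S S_0 &= \langle S_0,S_0\rangle_{\mathcal{HS}}\, Q_S T = Q_S T,
\end{align*}
which are the operator-theoretic analogues of the pointwise reproducing formula for the STFT. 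Although \cref{twistedrelation} is stated on $\mathcal{HS}$, both sides extend to $T\in\mathfrak{S}'$ through the dual pairing $(\mathfrak{S},\mathfrak{S}')$ used to define $Q_S T$ there, and I would justify this extension first, by testing against $\gamma_{w,z}(S_0)$ and using $S,S_0\in\mathfrak{S}$.

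Next I would remove the phase from the twisted convolution: the kernel in the definition of $\oast$ is unimodular, so $|F\oast H|\le |F|*|H|$ pointwise, where $*$ is ordinary (commutative) convolution on $\mathbb{R}^{4d}$. Consequently the mixed Young estimate \cref{twistedyoungs} holds with the $L^1_v$ factor in either slot, and applying it to the two identities above gives
\begin{align*}
    \|Q_{S_0} T\|_{L^{p,q}_m} &\le C\, \|Q_{S_0} S\|_{L^1_v}\, \|Q_S T\|_{L^{p,q}_m}, \\
    \|Q_S T\|_{L^{p,q}_m} &\le C\, \|Q_S S_0\|_{L^1_v}\, \|Q_{S_0} T\|_{L^{p,q}_m}.
\end{align*}
Since $S_0=\varphi_0\otimes\varphi_0$ is a Gaussian rank-one operator it lies in $\mathcal{M}^1_v$ for every polynomial weight, and $S\in\mathcal{M}^1_v$ by hypothesis; hence by the $v$-weighted form of \cref{feichopequiv} both $Q_{S_0}S$ and $Q_S S_0$ belong to $L^1_v(\mathbb{R}^{4d})$ with finite norm. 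Combining the two inequalities and recalling $\|T\|_{\mathcal{M}^{p,q}_m}=\|Q_{S_0}T\|_{L^{p,q}_m}$ gives $\|Q_S T\|_{L^{p,q}_m}\asymp \|T\|_{\mathcal{M}^{p,q}_m}$, which is the claim; in particular the finiteness of either side forces $T\in\mathcal{M}^{p,q}_m$.

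The main obstacle I anticipate is not the Young estimate but the bookkeeping needed to run \cref{twistedrelation} for a merely tempered operator $T\in\mathfrak{S}'$ rather than $T\in\mathcal{HS}$: one must verify that the weak, vector-valued integrals defining $Q_S^*$ and $\oast$ retain their meaning in this setting, and that $Q_{S_0}S$ and $Q_S S_0$ lie in $L^1_v$ so that the convolutions converge absolutely and the pointwise bound $|F\oast H|\le|F|*|H|$ can be integrated. The first point is handled by the dual pairing $(\mathfrak{S},\mathfrak{S}')$ already set up, and the second is exactly the weighted content of \cref{feichopequiv}, so both reduce to facts established earlier; once they are in place, the equivalence of norms is immediate.
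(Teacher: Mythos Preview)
Your proposal is correct and follows essentially the same route as the paper: both use the change-of-window identity \cref{twistedrelation} in the two directions $Q_{S_0}T\oast Q_S S_0 = Q_S T$ and $\|S\|_{\mathcal{HS}}^{-2}\,Q_S T\oast Q_{S_0}S = Q_{S_0}T$, then apply the mixed Young inequality \cref{twistedyoungs} together with $Q_S S_0,\,Q_{S_0}S\in L^1_v$. Your additional care about normalisation, the pointwise domination $|F\oast H|\le |F|*|H|$, and the extension of \cref{twistedrelation} to $T\in\mathfrak{S}'$ are details the paper leaves implicit, but the argument is the same.
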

\begin{proof}
    This follows from \cref{twistedrelation} and \cref{twistedyoungs};
    \begin{align*}
        \|Q_S T\|_{L^{p,q}_m} &= \|Q_{S_0} T \oast Q_S S_0\|_{L^{p,q}_m} \\
        &\leq C_{v,m}\|Q_{S_0} T\|_{L^{p,q}_m} \|Q_S S_0\|_{L^1_v}
    \end{align*}
    and conversely 
    \begin{align*}
        \|Q_{S_0} T\|_{L^{p,q}_m} &= \frac{1}{\|S\|_{\mathcal{HS}}^2} \|Q_{S} T \oast Q_{S_0} S\|_{L^{p,q}_m} \\
        &\leq \frac{C_{v,m}}{\|S\|_{\mathcal{HS}}^2}\|Q_{S} T\|_{L^{p,q}_m} \|Q_S S_0\|_{L^1_v}.
    \end{align*}
    
\end{proof}

\section{Discretisation of $\mathcal{M}^{p,q}$ spaces}
Since one of the pillars of time-frequency research is the result that one can discretise functions in modulation spaces by time-frequency shifts of an atom on a discrete subset of phase space, it is natural and desirable to pursue such results for quantum time-frequency analysis, and indeed is a core motivation. Since we can view $Q_S T$ as a wavelet transform on $\mathcal{HS}$, coorbit theory instructs us that there exist discrete decompositions for $T\in\mathcal{M}^{p,q}_m$, and we will spend this chapter considering these.
\subsection{Frames for $\mathcal{HS}$}
For the Hilbert space of Hilbert-Schmidt operators, we define the Gabor frames in the following manner:
\begin{definition}
    Let $S\in \mathcal{HS}$. We call $S$ a Gabor frame for $\mathcal{HS}$ on $\Lambda\times M$ if the set
    \begin{align*}
        \{ \gamma_{\lambda,\mu}(S) \}_{(\lambda,\mu\in \Lambda \times M)}
    \end{align*}
    is a frame for $\mathcal{HS}$.
\end{definition}
We will see that this definition is the natural concept for a Gabor frame for operators. We use the same term, Gabor frame, for both the operator and function case, but it will always be clear from the context which we refer to. For completeness we recall a central result of \cite{balasz2008}:
\begin{proposition}[Theorem 4.1(ii), \cite{balasz2008}]\label{balazsframe}
    Let $\{f_i\}_{i\in I}$ and $\{g_j\}_{j\in J}$ be frames for $L^2(\mathbb{R}^d)$. Then $\{f_i \otimes g_j\}_{(i,j)\in I\times J}$ is a frame for $\mathcal{HS}$.
\end{proposition}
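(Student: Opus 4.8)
The plan is to show that the frame property for the tensor system $\{f_i \otimes g_j\}$ on $\mathcal{HS}$ follows directly from the two underlying frame inequalities on $L^2(\mathbb{R}^d)$, exploiting the Hilbert space isomorphism $\mathcal{HS} \cong L^2(\mathbb{R}^d) \otimes L^2(\mathbb{R}^d)$ together with the rank-one identification $f_i \otimes g_j$. The central observation is that for any $T \in \mathcal{HS}$ the frame coefficient factors as an iterated inner product: writing the action of the rank-one operator against $T$ via the $\mathcal{HS}$ inner product, we have
\begin{align*}
    \langle T, f_i \otimes g_j \rangle_{\mathcal{HS}} = \langle T g_j, f_i \rangle_{L^2},
\end{align*}
using $\langle T, \psi \otimes \phi\rangle_{\mathcal{HS}} = \langle T\phi, \psi\rangle_{L^2}$, which is the defining pairing of the rank-one operator.

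First I would fix $T \in \mathcal{HS}$ and apply the frame inequality for $\{f_i\}_{i\in I}$ to the vector $T g_j \in L^2(\mathbb{R}^d)$ for each fixed $j$, obtaining
\begin{align*}
    A_f \|T g_j\|_{L^2}^2 \leq \sum_{i\in I} |\langle T g_j, f_i\rangle_{L^2}|^2 \leq B_f \|T g_j\|_{L^2}^2.
\end{align*}
Summing over $j \in J$ and interchanging the (nonnegative) sums, the middle term becomes exactly $\sum_{(i,j)} |\langle T, f_i\otimes g_j\rangle_{\mathcal{HS}}|^2$, while the outer bounds reduce the problem to controlling $\sum_j \|T g_j\|_{L^2}^2$. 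The next step is to recognize that $\sum_j \|T g_j\|_{L^2}^2 = \sum_j \langle T^* T g_j, g_j\rangle_{L^2}$ is itself sandwiched by the frame inequality for $\{g_j\}_{j\in J}$ applied to the positive operator $T^*T$, yielding
\begin{align*}
    A_g \,\mathrm{tr}(T^*T) \leq \sum_{j\in J} \langle T^*T g_j, g_j\rangle_{L^2} \leq B_g\, \mathrm{tr}(T^*T),
\end{align*}
where $\mathrm{tr}(T^*T) = \|T\|_{\mathcal{HS}}^2$. Chaining the two pairs of inequalities then gives
\begin{align*}
    A_f A_g \|T\|_{\mathcal{HS}}^2 \leq \sum_{(i,j)\in I\times J} |\langle T, f_i\otimes g_j\rangle_{\mathcal{HS}}|^2 \leq B_f B_g \|T\|_{\mathcal{HS}}^2,
\end{align*}
which is precisely the frame condition for $\{f_i\otimes g_j\}$ with bounds $A_f A_g$ and $B_f B_g$.

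The one point requiring genuine care — and what I expect to be the main technical obstacle — is the justification that $\sum_j \langle T^*T g_j, g_j\rangle_{L^2}$ is controlled by the frame bounds times the \emph{trace} rather than by the operator norm of $T^*T$. This is not immediate from the scalar frame inequality evaluated at individual vectors; rather, one must expand $T^*T$ (or equivalently $T$) in an orthonormal basis and apply the frame inequality termwise, then recombine via a monotone convergence or Tonelli argument to swap the order of summation. Concretely, writing $T = \sum_n \sigma_n \psi_n \otimes \phi_n$ in its singular value decomposition, one has $\|T g_j\|^2 = \sum_n \sigma_n^2 |\langle g_j, \phi_n\rangle|^2$, and summing over $j$ lets the $\{g_j\}$ frame bounds act on each $\|\phi_n\|^2 = 1$ separately, recovering $A_g \sum_n \sigma_n^2 \le \sum_j \|T g_j\|^2 \le B_g \sum_n \sigma_n^2$ with $\sum_n \sigma_n^2 = \|T\|_{\mathcal{HS}}^2$. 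Since all terms are nonnegative the interchange of summation is unconditionally valid, so no convergence subtlety actually bites; the argument is elementary once the SVD is introduced. I would therefore present the proof through the singular value decomposition to make the trace identity transparent, rather than manipulating $\langle T^*T g_j, g_j\rangle$ abstractly.
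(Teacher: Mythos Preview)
Your proof is correct and is precisely the ``simple calculation using the definitions of frames'' that the paper alludes to without spelling out. The factorisation $\langle T, f_i\otimes g_j\rangle_{\mathcal{HS}} = \langle T g_j, f_i\rangle_{L^2}$ followed by two nested applications of the frame inequalities (with the SVD making the trace bound on $\sum_j \|T g_j\|^2$ explicit) is exactly the intended argument, and your care about the interchange of summation via nonnegativity is appropriate but, as you note, poses no real obstacle.
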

The result follows from a simple calculation using the definitions of frames. Since the quantisation schemes are unitary, we immediately find that:
\begin{proposition}\label{functiontoopframe}
    Let $\{g_i\}_{i\in I}$ be a frame in $L^2(\mathbb{R}^{2d})$. Then $\{L_{g_i}\}_{i\in I}$ is a frame for $\mathcal{HS}$. 
\end{proposition}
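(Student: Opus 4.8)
The plan is to reduce the claim to the invariance of the frame property under unitary maps, using the fact recalled earlier in the excerpt that Weyl quantisation $W\colon L^2(\mathbb{R}^{2d})\to\mathcal{HS}$, $\sigma\mapsto L_\sigma$, is unitary. The crucial observation is that, because $W$ preserves inner products, for any symbol $\sigma\in L^2(\mathbb{R}^{2d})$ one has $\langle L_\sigma, L_{g_i}\rangle_{\mathcal{HS}} = \langle \sigma, g_i\rangle_{L^2}$ and $\|L_\sigma\|_{\mathcal{HS}} = \|\sigma\|_{L^2}$. The whole argument is then a transfer of the frame inequalities from $L^2(\mathbb{R}^{2d})$ to $\mathcal{HS}$ along this isometry.

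First I would take an arbitrary $T\in\mathcal{HS}$ and write $T = L_{\sigma_T}$ for its unique Weyl symbol $\sigma_T\in L^2(\mathbb{R}^{2d})$, which exists by surjectivity of $W$. Expanding the operator frame sum and using the inner-product preservation gives
\begin{align*}
    \sum_{i\in I} |\langle T, L_{g_i}\rangle_{\mathcal{HS}}|^2 = \sum_{i\in I}|\langle \sigma_T, g_i\rangle_{L^2}|^2.
\end{align*}
Next I would apply the frame inequalities for $\{g_i\}_{i\in I}$ in $L^2(\mathbb{R}^{2d})$, with bounds $A,B$, to the vector $\sigma_T$, obtaining
\begin{align*}
    A\|\sigma_T\|_{L^2}^2 \le \sum_{i\in I}|\langle \sigma_T, g_i\rangle_{L^2}|^2 \le B\|\sigma_T\|_{L^2}^2.
\end{align*}
Since $\|\sigma_T\|_{L^2} = \|T\|_{\mathcal{HS}}$, combining the last two displays yields exactly the frame condition for $\{L_{g_i}\}_{i\in I}$ in $\mathcal{HS}$ with the same bounds $A,B$. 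As $T$ was arbitrary, the proof is complete.

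There is essentially no genuine obstacle here: the statement is an instance of the general principle that a unitary operator carries any frame to a frame with identical bounds. The only point requiring minor care is to confirm that $W$ intertwines the $L^2$ and $\mathcal{HS}$ inner products in the orientation matching the stated convention $\langle S,T\rangle_{\mathcal{HS}} = \mathrm{tr}(ST^*)$; this follows directly from the unitarity asserted in the preliminaries, so no separate computation is needed.
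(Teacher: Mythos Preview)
Your proof is correct and follows essentially the same approach as the paper: both arguments use the unitarity of Weyl quantisation to transfer the frame inequalities from $L^2(\mathbb{R}^{2d})$ to $\mathcal{HS}$ via the identities $\langle T, L_{g_i}\rangle_{\mathcal{HS}} = \langle \sigma_T, g_i\rangle_{L^2}$ and $\|T\|_{\mathcal{HS}} = \|\sigma_T\|_{L^2}$.
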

\begin{proof}
    This is just a result of Weyl quantisation being unitary from $L^2(\mathbb{R}^{2d})$ to $\mathcal{HS}$, that is $\|S\|_{\mathcal{HS}} = \|\sigma_S\|_{L^2}$, and
    \begin{align*}
        \langle S, L_{g_i} \rangle_{\mathcal{HS}} = \langle \sigma_S, g_i \rangle_{L^2}.
    \end{align*}
    By our assumption on $\{g_i\}_{i\in I}$, we have that for any $S\in\mathcal{HS}$;
    \begin{align*}
        A\|\sigma_S\|_{L^2}^2 \leq \sum_{i\in I} |\langle \sigma_S, g_i \rangle_{L^2}|^2 \leq B\|\sigma_S\|_{L^2}^2.
    \end{align*}
    It then follows that
    \begin{align*}
        A\|S\|_{\mathcal{HS}}^2 \leq \sum_{i\in I} |\langle S, L_{g_i} \rangle_{\mathcal{HS}}|^2 \leq B\|S\|_{\mathcal{HS}}^2,
    \end{align*}
    precisely the frame condition for $\mathcal{HS}$.
    
\end{proof}
In the case of the frame $\{g_i\}_{i\in I}$ being a Gabor frame, that is, $g_i = \pi(w_i,z_i)g$ for some $g\in L^2(\mathbb{R}^{2d})$, we can find the particular form of the Gabor frame generated by $L_g$ using \cref{tfshiftsymbol}:
\begin{corollary}
    Given a Gabor frame $\{\pi(\lambda,\mu)g\}_{\lambda,\mu\in\Lambda\times M}$ for $L^2(\mathbb{R}^{2d})$, the Gabor system given by
    \begin{align*}
        \{\gamma_{\lambda,\mu}(L_{g})\}_{(\lambda, \mu)\in U^{-1}(\Lambda\times M)},
    \end{align*}
    is a frame in $\mathcal{HS}$.
\end{corollary}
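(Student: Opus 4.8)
The plan is to push the frame property through the Weyl correspondence, which is unitary from $L^2(\mathbb{R}^{2d})$ onto $\mathcal{HS}$, and to use \cref{tfshiftsymbol} to identify the Weyl symbols of the operator time-frequency shifts. First I would apply \cref{tfshiftsymbol} with $S = L_g$, so that $\sigma_{L_g} = g$. For an index $(\lambda,\mu) \in U^{-1}(\Lambda\times M)$ write $(\lambda,\mu) = U^{-1}(\nu)$ with $\nu \in \Lambda\times M$; then $U(\lambda,\mu) = \nu$ and \cref{tfshiftsymbol} gives
\begin{align*}
    \sigma_{\gamma_{\lambda,\mu}(L_g)} = e^{i\pi(\lambda_1+\mu_1)(\lambda_2-\mu_2)}\,\pi(\nu)g =: c_{\lambda,\mu}\,\pi(\nu)g,
\end{align*}
where $c_{\lambda,\mu}$ is a unimodular scalar. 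Equivalently, $L_{c_{\lambda,\mu}\pi(\nu)g} = \gamma_{\lambda,\mu}(L_g)$.

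Next I would observe that the passage $(\lambda,\mu)\mapsto \nu = U(\lambda,\mu)$ is a bijection from $U^{-1}(\Lambda\times M)$ onto $\Lambda\times M$, since $U$ is invertible (its inverse is recorded explicitly in the corollary following \cref{tfshiftsymbol}). Hence the family $\{c_{\lambda,\mu}\pi(\nu)g\}_{(\lambda,\mu)\in U^{-1}(\Lambda\times M)}$ is, up to this reindexing, just the Gabor system $\{\pi(\nu)g\}_{\nu\in\Lambda\times M}$ with each element multiplied by a unimodular constant. Because $|c_{\lambda,\mu}| = 1$, we have $|\langle f, c_{\lambda,\mu}\pi(\nu)g\rangle|^2 = |\langle f, \pi(\nu)g\rangle|^2$ for every $f \in L^2(\mathbb{R}^{2d})$, so the two families satisfy exactly the same frame inequality. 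As $\{\pi(\nu)g\}_{\nu\in\Lambda\times M}$ is a frame for $L^2(\mathbb{R}^{2d})$ by hypothesis, so is $\{c_{\lambda,\mu}\pi(\nu)g\}$, with the same bounds $A,B$.

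Finally I would invoke \cref{functiontoopframe}: applied to the $L^2(\mathbb{R}^{2d})$-frame $\{c_{\lambda,\mu}\pi(\nu)g\}$, it yields that the Weyl quantisations $\{L_{c_{\lambda,\mu}\pi(\nu)g}\}$ form a frame for $\mathcal{HS}$ with bounds $A,B$. Since $L_{c_{\lambda,\mu}\pi(\nu)g} = \gamma_{\lambda,\mu}(L_g)$ by the first step, this is precisely the claim. The argument is essentially bookkeeping around a unitary intertwining, so there is no substantial obstacle; the only points requiring care are confirming that the exponent in \cref{tfshiftsymbol} is purely imaginary (so the scalar is genuinely unimodular and drops out of the frame sum) and that the reindexing by $U^{-1}$ is a bijection, both of which are immediate from the explicit formula for $U$.
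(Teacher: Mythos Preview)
Your argument is correct and follows exactly the route the paper indicates: it is stated as an immediate consequence of \cref{functiontoopframe} combined with \cref{tfshiftsymbol}, and you have spelled out the bookkeeping (the bijection $U^{-1}(\Lambda\times M)\to\Lambda\times M$ and the irrelevance of the unimodular phase) that the paper leaves implicit.
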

Conversely, using the result of \cref{balazsframe} we can consider the Weyl symbol of a Hilbert-Schmidt frame constructed from two function frames:
\begin{proposition}\label{framesymbol}
    Given two Gabor frames $\{\pi(\lambda)g\}_{\lambda\in\Lambda}$, $\{\pi(\mu)f\}_{\mu\in M}$ in $L^2(\mathbb{R}^d)$, the Weyl symbol of the tensor product, $\sigma_{f\otimes g}$, generates a Gabor frame in $L^2(\mathbb{R}^{2d})$ on the lattice $U(\Lambda\times M)$.
\end{proposition}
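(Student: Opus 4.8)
The plan is to reduce this statement to \cref{balazsframe} together with the intertwining relation of \cref{tfshiftsymbol} and the unitarity of Weyl quantisation, so that essentially no fresh computation is needed. First I would invoke \cref{balazsframe}: since $\{\pi(\lambda)g\}_{\lambda\in\Lambda}$ and $\{\pi(\mu)f\}_{\mu\in M}$ are frames for $L^2(\mathbb{R}^d)$, the tensor system $\{\pi(\mu)f\otimes\pi(\lambda)g\}_{(\lambda,\mu)\in\Lambda\times M}$ is a frame for $\mathcal{HS}$. The key observation is that, by the rank-one identity $\gamma_{w,z}(f\otimes g)=(\pi(z)f)\otimes(\pi(w)g)$ recorded after \cref{alphawzdef}, this system is exactly $\{\gamma_{\lambda,\mu}(f\otimes g)\}_{(\lambda,\mu)\in\Lambda\times M}$.

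Next I would pass to the symbol side, exactly as in the proof of \cref{functiontoopframe}. Because Weyl quantisation is unitary, $\langle S,\gamma_{\lambda,\mu}(f\otimes g)\rangle_{\mathcal{HS}}=\langle\sigma_S,\sigma_{\gamma_{\lambda,\mu}(f\otimes g)}\rangle_{L^2}$, so the frame inequalities in $\mathcal{HS}$ translate into frame inequalities in $L^2(\mathbb{R}^{2d})$, with the same bounds, for the family $\{\sigma_{\gamma_{\lambda,\mu}(f\otimes g)}\}_{(\lambda,\mu)}$; and since every $F\in L^2(\mathbb{R}^{2d})$ is $\sigma_S$ for some $S\in\mathcal{HS}$, these inequalities hold for all test vectors. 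Applying \cref{tfshiftsymbol} with $S=f\otimes g$ and $\sigma_{f\otimes g}=W(f,g)$ gives $\sigma_{\gamma_{\lambda,\mu}(f\otimes g)}=e^{i\pi(\lambda_1+\mu_1)(\lambda_2-\mu_2)}\pi(U(\lambda,\mu))W(f,g)$.

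The remaining cleanup is to discard the unimodular scalar: since $|e^{i\pi(\lambda_1+\mu_1)(\lambda_2-\mu_2)}|=1$, the quantity $|\langle\sigma_S,\sigma_{\gamma_{\lambda,\mu}(f\otimes g)}\rangle_{L^2}|^2$ is unchanged if the phase is dropped, so $\{\pi(U(\lambda,\mu))W(f,g)\}_{(\lambda,\mu)\in\Lambda\times M}$ satisfies the same frame bounds. Finally, because $U$ is the invertible linear map displayed after \cref{tfshiftsymbol}, the image $U(\Lambda\times M)$ is again a lattice in $\mathbb{R}^{4d}$, and the family is precisely the Gabor system $\{\pi(\nu)\sigma_{f\otimes g}\}_{\nu\in U(\Lambda\times M)}$, which is therefore a Gabor frame. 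I do not anticipate a genuine obstacle; the whole argument is bookkeeping around three facts already established, and the only points that merit a word of justification are that the per-point phases are unimodular (hence irrelevant to the frame bounds) and that an invertible linear image of a lattice is a lattice.
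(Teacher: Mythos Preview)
Your proposal is correct and follows essentially the same approach as the paper's own proof: invoke \cref{balazsframe} to get the $\mathcal{HS}$ frame $\{\pi(\mu)f\otimes\pi(\lambda)g\}=\{\gamma_{\lambda,\mu}(f\otimes g)\}$, then transfer the frame inequalities to $L^2(\mathbb{R}^{2d})$ via the unitarity of Weyl quantisation and the identity of \cref{tfshiftsymbol}. Your version is slightly more explicit about two points the paper glosses over (the unimodular phase being harmless and $U(\Lambda\times M)$ being a lattice), but these are precisely the bookkeeping details and not a different route.
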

\begin{proof}
    By \cref{balazsframe}, the tensor product $f\otimes g$ forms a Gabor frame for $\mathcal{HS}$ on the lattice $\Lambda\times M$. The result then follows in the converse manner to \cref{functiontoopframe} by again using \cref{tfshiftsymbol}. In particular, given any $h \in L^2(\mathbb{R}^{2d})$:
    \begin{align*}
        A\|L_h\|_{\mathcal{HS}}^2 \leq \sum_{\Lambda\times M} |\langle L_h, \pi(\mu)f\otimes\pi(\lambda)g \rangle_{\mathcal{HS}}|^2 \leq B\|L_h\|_{\mathcal{HS}}^2,
    \end{align*}
    and so
    \begin{align*}
        A\|h\|_{L^2}^2 \leq \sum_{\Lambda\times M} |\langle h, \pi(U(\lambda,\mu))\sigma_{f\otimes g} \rangle_{L^2}|^2 \leq B\|h\|_{L^2}^2.
    \end{align*}

\end{proof}

Finally we consider the dual frame of an operator. In \cite{balasz2008} the author showed that a dual frame for a rank-one tensor frame $\{f_i\otimes g_j\}_{I\times J}$ is given by the tensor product of dual frames $\{\Tilde{f}_i\otimes\Tilde{g_j}\}_{I\times J}$. Clearly in this case the dual window of the rank one $\mathcal{M}^1$ window is again in $\mathcal{M}^1$. Indeed, since operator frames correspond to function frames, we can use the Weyl correspondence to show:
\begin{proposition}
    Let $S\in\mathcal{M}^1$ generate a Gabor frame for $\mathcal{HS}$. Then the canonical dual atom $\Tilde{S}$ is in $\mathcal{M}^1$.
\end{proposition}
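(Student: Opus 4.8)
The plan is to transport the whole question to the level of Weyl symbols, where it becomes a statement about the canonical dual window of an ordinary Gabor frame, and then to invoke the classical fact that the Feichtinger algebra $M^1$ is preserved under the inverse Gabor frame operator. First I would record that, since Weyl quantisation $W\colon L^2(\mathbb{R}^{2d})\to\mathcal{HS}$ is unitary and $\langle T,\gamma_{\lambda,\mu}(S)\rangle_{\mathcal{HS}}=\langle \sigma_T,\sigma_{\gamma_{\lambda,\mu}(S)}\rangle_{L^2}$, \cref{tfshiftsymbol} gives $\sigma_{\gamma_{\lambda,\mu}(S)}=c_{\lambda,\mu}\,\pi(U(\lambda,\mu))\sigma_S$, where $c_{\lambda,\mu}=e^{i\pi(\lambda_1+\mu_1)(\lambda_2-\mu_2)}$ is a unimodular scalar. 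The crucial observation is that in the frame operator each window occurs once in a bra and once in a ket, so these phases cancel: writing $G=\sigma_S$, the image under $W^{-1}$ of the operator frame operator $E=\sum_{\lambda,\mu}\langle\,\cdot\,,\gamma_{\lambda,\mu}(S)\rangle\,\gamma_{\lambda,\mu}(S)$ is exactly the ordinary Gabor frame operator $E_G=\sum_{\lambda,\mu}\langle\,\cdot\,,\pi(U(\lambda,\mu))G\rangle\,\pi(U(\lambda,\mu))G$ for the window $G$ over the set $U(\Lambda\times M)$.

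Next I would use that $U$ is invertible (its inverse is recorded after \cref{tfshiftsymbol}) and $\Lambda\times M$ is a lattice in $\mathbb{R}^{4d}$, so that $U(\Lambda\times M)$ is again a lattice; moreover, since $S$ generates an operator frame, $E$ is bounded and boundedly invertible, and the intertwining $E=W E_G W^{-1}$ then forces $E_G$ to be bounded and invertible as well. Hence $\{\pi(U(\lambda,\mu))G\}_{\lambda,\mu}$ is a genuine Gabor frame for $L^2(\mathbb{R}^{2d})$. From $E^{-1}=W E_G^{-1}W^{-1}$ I can identify the symbol of the canonical dual atom: $\sigma_{\tilde S}=W^{-1}E^{-1}S=E_G^{-1}G$, which is precisely the canonical dual window $\tilde G$ of this Gabor frame.

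To finish, note that by hypothesis $S\in\mathcal{M}^1$, so \cref{feichopequiv} gives $G=\sigma_S\in M^1(\mathbb{R}^{2d})$. At this point the problem is reduced to the classical Gröchenig–Leinert theorem (cf.\ \cite{grochenigtfa}): if a Gabor frame on a lattice has window in $M^1$, then its canonical dual window $E_G^{-1}G$ also lies in $M^1$. Applying this yields $\sigma_{\tilde S}=\tilde G\in M^1(\mathbb{R}^{2d})$, and one further appeal to \cref{feichopequiv}, namely the equivalence $\sigma_T\in M^1\iff T\in\mathcal{M}^1$, gives $\tilde S\in\mathcal{M}^1$.

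The one delicate point, and the step I would verify most carefully, is the phase cancellation underlying the unitary equivalence $E=W E_G W^{-1}$: one must check that the $(\lambda,\mu)$-dependent phase supplied by \cref{tfshiftsymbol} enters each rank-one summand $\langle\,\cdot\,,\sigma_{\gamma_{\lambda,\mu}(S)}\rangle\,\sigma_{\gamma_{\lambda,\mu}(S)}$ as $\overline{c_{\lambda,\mu}}\,c_{\lambda,\mu}=1$, so that $E_G$ is a bona fide Gabor frame operator with no residual cocycle. Once this is confirmed, everything else is a transcription of the standard $M^1$ Gabor theory through the unitary $W$, and no genuinely new analysis is needed.
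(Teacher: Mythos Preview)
Your proposal is correct and follows essentially the same approach as the paper: transport the frame to the symbol side via the Weyl correspondence and \cref{tfshiftsymbol}, recognise the resulting system as an ordinary Gabor frame on the lattice $U(\Lambda\times M)$ with window $\sigma_S\in M^1(\mathbb{R}^{2d})$, and then invoke the classical result that the canonical dual window of an $M^1$ Gabor frame lies in $M^1$. The paper's proof is terser, citing \cite{gro04} (Gr\"ochenig's localization-of-frames framework) rather than the Gr\"ochenig--Leinert theorem directly, but the underlying mechanism is the same; your explicit verification of the phase cancellation in the frame operator is a useful detail the paper leaves implicit.
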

\begin{proof}
    This follows from the result of \cite{gro04} on the localization of frames, along with the arguments above.
    
\end{proof}

\subsection{Frames for $\mathcal{M}^{p,q}_m$ Spaces}
We now turn our attention to operators in general $\mathcal{M}^p$ spaces, which consist of operators with both Weyl symbol and kernel in $M^p(\mathbb{R}^{2d})$:
\begin{proposition}\label{modspsacediscr}
    Let $S\in\mathcal{M}^1$ be a Gabor $\mathcal{HS}$ frame over the lattice $\Lambda \times M$. Then an operator $T\in\mathcal{M}^{\infty}$ is in $\mathcal{M}^p$ if and only if $\{Q_S T(\lambda,\mu)\}_{\lambda,\mu} \in \ell^p(\Lambda \times M)$.
\end{proposition}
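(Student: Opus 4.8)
The plan is to follow the standard coorbit-theoretic argument that characterises membership in a modulation space by $\ell^p$ decay of sampled voice-transform coefficients, transplanted to the polarised Cohen's class $Q_S$ on double phase space. Both directions rest on the same two ingredients used in the function case: a reproducing identity that upgrades an $L^p$ estimate on $Q_S T$ to a Wiener amalgam estimate, so that pointwise sampling on $\Lambda\times M$ becomes legitimate, and the frame reconstruction furnished by the dual atom.

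For the forward direction, suppose $T\in\mathcal{M}^p$. By \cref{equivnorms}, since $S\in\mathcal{M}^1$ the quantity $\|Q_S T\|_{L^p(\mathbb{R}^{4d})}$ is comparable to $\|T\|_{\mathcal{M}^p}$, so $Q_S T\in L^p(\mathbb{R}^{4d})$. I then invoke the reproducing identity \cref{twistedrelation} with $R=W=S$, namely $Q_S T\oast Q_S S=\|S\|_{\mathcal{HS}}^2\,Q_S T$, to realise $Q_S T$ as a twisted convolution of an $L^p$ function with $Q_S S$. The key point is that $S\in\mathcal{M}^1$ forces $Q_S S\in W(L^1_v(\mathbb{R}^{4d}))$; this is the double-phase-space analogue of the fact that $V_g g\in W(L^1_v)$ for $g\in M^1_v$, and follows from \cref{cohensstftform} together with the identification of $Q_S$ with an STFT of the Weyl symbol under the change of variables $U$. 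With this in hand, the $\oast$-version of the Wiener amalgam convolution estimate (\cref{weineramalgconv}, proved exactly as in the scalar case) gives $Q_S T\in W(L^p(\mathbb{R}^{4d}))$. Finally, restriction of a Wiener amalgam function to the lattice $\Lambda\times M$ is bounded into $\ell^p(\Lambda\times M)$, which yields $\{Q_S T(\lambda,\mu)\}_{\lambda,\mu}\in\ell^p$.

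For the converse, suppose the sampled sequence lies in $\ell^p$. Because $S\in\mathcal{M}^1$ generates a Gabor frame for $\mathcal{HS}$, the preceding proposition guarantees a canonical dual atom $\tilde S\in\mathcal{M}^1$, and hence the reconstruction $T=\sum_{\lambda,\mu}Q_S T(\lambda,\mu)\,\gamma_{\lambda,\mu}(\tilde S)$. Applying $Q_{S_0}$ and using linearity together with the covariance of $Q_{S_0}$ under the $\gamma$-shifts, so that $Q_{S_0}(\gamma_{\lambda,\mu}(\tilde S))$ is a translate of $Q_{S_0}\tilde S$ up to a phase, I estimate $\|Q_{S_0}T\|_{L^p}$ by a superposition controlled by the $\ell^p$ norm of the coefficients and the $W(L^1_v)$ norm of $Q_{S_0}\tilde S$; this is exactly the boundedness of the synthesis operator from $\ell^p$ into $\mathcal{M}^p$, again an instance of the same Wiener amalgam machinery and of the Young-type bound \cref{twistedyoungs}. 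Since $T\in\mathcal{M}^{\infty}$ by hypothesis, these manipulations are justified within the dual pairing $(\mathfrak{S},\mathfrak{S}')$, and we conclude $T\in\mathcal{M}^p$.

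I expect the main obstacle to be the two Wiener amalgam statements on double phase space: that $Q_S S\in W(L^1_v(\mathbb{R}^{4d}))$ for $S\in\mathcal{M}^1$, and the attendant convolution and sampling/synthesis estimates for the twisted convolution $\oast$. None of these is conceptually new, but each must be adapted from the $\mathbb{R}^{2d}$ results (\cref{weineramalgconv}, \cref{twistedyoungs}) to the $\mathbb{R}^{4d}$ twisted-convolution setting, keeping track of the non-symplectic change of variables $U$ relating $Q_S$ to an ordinary STFT of the Weyl symbol. Once the membership $Q_S S\in W(L^1_v)$ is secured, both directions reduce to the familiar estimates.
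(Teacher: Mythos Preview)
Your proposal is correct but takes a genuinely different route from the paper. The paper's proof is much shorter: it exploits the fact that in the unmixed case $p=q$ the space $\mathcal{M}^p$ coincides with the operators whose Weyl symbol lies in $M^p(\mathbb{R}^{2d})$. Since Weyl quantisation is unitary, the hypothesis that $S$ generates a Gabor frame for $\mathcal{HS}$ on $\Lambda\times M$ transfers (via \cref{framesymbol}) to $\sigma_S\in M^1(\mathbb{R}^{2d})$ generating a Gabor frame for $L^2(\mathbb{R}^{2d})$ on the lattice $U(\Lambda\times M)$; the classical frame characterisation of function modulation spaces then gives $\sigma_T\in M^p(\mathbb{R}^{2d})$ if and only if the sampled STFT coefficients $\langle\sigma_T,\pi(U(\lambda,\mu))\sigma_S\rangle$ lie in $\ell^p$, and by \cref{cohensstftform} these coincide up to a unimodular phase with $Q_S T(\lambda,\mu)$. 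Your approach instead rebuilds the coorbit machinery (Wiener amalgam membership of $Q_S S$, sampling and synthesis estimates for $\oast$) directly on double phase space. This is exactly what the paper does \emph{later} for the general mixed-norm spaces $\mathcal{M}^{p,q}_m$, where the Weyl-symbol shortcut is unavailable because the change of variables $U$ is not symplectic and the order of integration matters; so your argument is the more robust and general one, but for the present proposition the paper's reduction to the classical function result is considerably more economical.
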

\begin{proof}
    By the same argument as \cref{framesymbol}, the Weyl symbol $\sigma_S$ is a Gabor frame on $U(\Lambda\times M)$, and by the Weyl correspondence $\sigma_S\in M^1(\mathbb{R}^{2d})$. Hence by the function frame characterisation of modulation spaces, $\sigma_T \in M^p(\mathbb{R}^{2d})$ if and only if $\{\langle \sigma_T, \pi(U(\lambda,\mu))\sigma_S \rangle\}_{\Lambda\times M} \in \ell^p(\Lambda \times M)$. Thus by \cref{tfshiftsymbol}, this implies that $\{Q_S T(\lambda,\mu)\}_{\lambda,\mu} \in \ell^p(\Lambda \times M)$.
    
\end{proof}
Motivated by this, we continue to find a discrete characterisation of the mixed-norm spaces $\mathcal{M}^{p,q}_m$, with the aim of showing that we can reconstruct $T$ from samples of $Q_S T$ on the lattice. Since the change of variables $U$ on phase space is \textit{not} symplectic, we are in the mixed-norm case unable to translate decomposition results for the Weyl symbol or kernel to results for $T$ in a straight forward way. Indeed the permutation of order of integration in defining modulation spaces leads to very different spaces. 

Considering rank-one operators, it may seem like the tensorisation of modulation spaces will give appropriate spaces. Indeed, the Gelfand triple $(\mathcal{M}^1,\mathcal{HS},\mathcal{M}^{\infty})$ corresponds to projective tensor product, tensor product of Hilbert spaces, and injective tensor product. However, the completion of the algebraic tensor product for arbitrary $p,q$ is not so straightforward.
Instead we will consider the reproducing formula for $Q_S T$ and show that the twisted convolution property along with Young's inequality gives the appropriate discretisation properties. We begin by recalling the following result:
\begin{lemma}
    Given $S\in\mathcal{M}^1_v$, then $Q_{S_0} S\in W(L^{\infty},\ell^1_v)$.
\end{lemma}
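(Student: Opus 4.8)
The plan is to reduce the statement to a standard Wiener--amalgam regularity property of the short-time Fourier transform on $\mathbb{R}^{2d}$, and then transport that property to double phase space through the linear change of variables $U$. First I would pass to the symbol level using \cref{cohensstftform} (equivalently the subsequent Claim on $\mathfrak{S},\mathfrak{S}'$), which expresses the polarised Cohen's class as an STFT of Weyl symbols up to a unimodular prefactor. Taking moduli, and recalling that $V_g f(x)=\langle f,\pi(x)g\rangle$, this reads
\begin{align*}
    |Q_{S_0}S(w,z)| = \big|\langle \sigma_S, \pi(U(w,z))\sigma_{S_0}\rangle_{L^2}\big| = \big|V_{\sigma_{S_0}}\sigma_S(U(w,z))\big|.
\end{align*}
Since the amalgam norm $\|\cdot\|_{W(L^\infty,\ell^1_v)}$ depends only on the pointwise modulus, the unimodular phase is harmless, and it suffices to show that $V_{\sigma_{S_0}}\sigma_S\circ U$ lies in $W(L^\infty,\ell^1_v)(\mathbb{R}^{4d})$.

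Second I would identify the two symbols as Feichtinger functions. By \cref{feichopequiv}, $S\in\mathcal{M}^1_v$ gives $\sigma_S\in M^1_v(\mathbb{R}^{2d})$; and $\sigma_{S_0}=W(\varphi_0,\varphi_0)$ is a Gaussian on $\mathbb{R}^{2d}$, hence lies in $M^1_v(\mathbb{R}^{2d})$ for any polynomial-growth weight $v$. The relevant classical input is then that the cross-STFT of two $M^1_v$ functions lies in $W(L^\infty,\ell^1_v)$; this is the amalgam refinement of the relation $V_g g\in W(L^1_v)$ already recorded in the preliminaries (cf. \cref{weineramalgconv}), applied with $g=\sigma_{S_0}$ and signal $\sigma_S$. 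Thus $V_{\sigma_{S_0}}\sigma_S\in W(L^\infty,\ell^1_v)(\mathbb{R}^{4d})$ before the change of variables.

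The main, and essentially only nontrivial, step is to show that composition with $U$ preserves this amalgam space. Here $U$ is a fixed invertible linear map of $\mathbb{R}^{4d}$ (its inverse was computed explicitly after \cref{tfshiftsymbol}), hence bi-Lipschitz: it carries the reference grid $\{k+\Omega\}$ to a uniformly locally finite covering of $\mathbb{R}^{4d}$ whose cells are comparable to a bounded number of reference cells, so the local $L^\infty$ pieces of $F$ and $F\circ U$ are mutually dominated cell-by-cell. Since $v$ is a moderate weight of polynomial growth and $U$ is a fixed linear isomorphism, one has the two-sided comparability $v(U\,\cdot\,)\asymp v(\cdot)$, which ensures the global $\ell^1_v$ summations match up to a constant. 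Combining these gives that $F\mapsto F\circ U$ is bounded on $W(L^\infty,\ell^1_v)(\mathbb{R}^{4d})$, whence $Q_{S_0}S\in W(L^\infty,\ell^1_v)$. I expect the only care required is the bookkeeping that the grid distortion under $U$ together with $v\circ U\asymp v$ yields an equivalent amalgam norm; the symbol-level reformulation and the membership $\sigma_S,\sigma_{S_0}\in M^1_v$ are direct citations.
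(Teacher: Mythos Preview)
Your proposal is correct and follows essentially the same route as the paper. The paper's argument is a one-line remark that in the $\mathcal{M}^p_v$ case the Weyl symbols lie in $M^p_{\tilde v}(\mathbb{R}^{2d})$ with $\tilde v = v\circ U$; you have simply unpacked this, passing to symbols via \cref{cohensstftform}, invoking the classical amalgam regularity of the STFT for $M^1$-windows and signals, and then handling the linear change of variables $U$ (your observation $v\circ U\asymp v$ is exactly what reconciles the paper's weight $\tilde v$ with the target weight $v$). One small caveat: \cref{feichopequiv} is stated unweighted, so the implication $S\in\mathcal{M}^1_v\Rightarrow \sigma_S\in M^1_v(\mathbb{R}^{2d})$ really goes through $\sigma_S\in M^1_{v\circ U^{-1}}$ first and then uses your weight comparability---but this is precisely the bookkeeping you anticipated.
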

This follows from the fact that in the $\mathcal{M}^p_v$ case the Weyl symbols are in $M^p_{\Tilde{v}}(\mathbb{R}^{2d})$, where $\Tilde{v} = v\circ U$.

We consider first the analysis operator:
\begin{proposition}
    Given $S\in\mathcal{M}^1_v$ and lattice $\Lambda \times M$, the analysis map $C_S$, defined as
    \begin{align*}
        C_S (T) = \{Q_S T(\lambda,\mu)\}_{\Lambda \times M},
    \end{align*}
    is a bounded map from $\mathcal{M}^{p,q}_m$ to $\ell^{p,q}_m(\Lambda \times M)$.
\end{proposition}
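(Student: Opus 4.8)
The plan is to realise the sampled coefficients as the restriction to $\Lambda\times M$ of a continuous function lying in a Wiener amalgam space, and then to invoke the standard fact that sampling such a function on a lattice produces a sequence whose $\ell^{p,q}_m$-norm is controlled by the amalgam norm. Concretely, I would first use the reproducing identity behind \cref{equivnorms}, namely the instance
\begin{align*}
    Q_S T = Q_{S_0} T \oast Q_S S_0
\end{align*}
of \cref{twistedrelation} (valid since $\|S_0\|_{\mathcal{HS}}=1$, taking the windows $S_0,S,S_0$ in the roles of $S,R,W$), to rewrite $Q_S T$ as a twisted convolution of the \emph{given} datum $Q_{S_0} T \in L^{p,q}_m(\mathbb{R}^{4d})$ --- whose norm is by definition $\|T\|_{\mathcal{M}^{p,q}_m}$ --- with the fixed kernel $Q_S S_0$.

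The second step is to transfer the mixed-norm control through the convolution into a Wiener amalgam estimate. Since the phase factor in $\oast$ is unimodular we have the pointwise domination $|F\oast G|\leq |F|*|G|$, reducing matters to ordinary convolution, for which the $\mathbb{R}^{4d}$ analogue of \cref{weineramalgconv} yields
\begin{align*}
    \|Q_S T\|_{W(L^{p,q}_m(\mathbb{R}^{4d}))} \leq C\,\|Q_{S_0} T\|_{L^{p,q}_m(\mathbb{R}^{4d})}\,\|Q_S S_0\|_{W(L^1_v(\mathbb{R}^{4d}))},
\end{align*}
provided the kernel $Q_S S_0$ lies in $W(L^1_v)$. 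That membership is essentially the preceding lemma, which gives $Q_{S_0} S\in W(L^\infty,\ell^1_v)$; since $Q_S S_0$ differs from $Q_{S_0} S$ only by a reflection, complex conjugation and a unimodular phase, and $v$ is symmetric, the same amalgam bound holds for $Q_S S_0$. Finally, because $Q_S T$ is continuous and its amalgam norm dominates the $\ell^{p,q}_m$-norm of its samples on any lattice compatible with the fundamental domain, I would conclude
\begin{align*}
    \|C_S T\|_{\ell^{p,q}_m(\Lambda\times M)} \leq C'\,\|Q_S T\|_{W(L^{p,q}_m)} \leq C''\,\|T\|_{\mathcal{M}^{p,q}_m},
\end{align*}
which is the claimed boundedness.

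The main obstacle I anticipate is careful bookkeeping at the two transfer points rather than any deep difficulty. First, one must justify the $\mathbb{R}^{4d}$ version of \cref{weineramalgconv} for the twisted convolution $\oast$: the key is the modulus domination above together with the mixed-norm amalgam Young inequality of \cref{twistedyoungs}, so this is a direct adaptation of the $\mathbb{R}^{2d}$ argument, but the precise interplay of the submultiplicative weight $v$ and the $v$-moderateness of $m$ must be tracked through the $4d$ variables. Second, the sampling step requires that $Q_S T$ genuinely belong to the amalgam space \emph{as a continuous function}, so that pointwise evaluation is meaningful and dominated by the local essential suprema defining the $W(L^{p,q}_m)$ norm; continuity is immediate since $Q_S T$ is a matrix coefficient of the strongly continuous projective representation $\gamma$, and the comparison between the $\ell^{p,q}_m$-norm of the samples and the amalgam norm is the routine lattice-sampling estimate. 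Everything else reduces to the already-established isometry and convolution machinery.
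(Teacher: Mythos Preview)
Your proposal is correct and follows essentially the same route as the paper: rewrite $Q_S T$ via the twisted-convolution identity $Q_S T = Q_{S_0} T \oast Q_S S_0$, apply the Wiener amalgam convolution estimate (\cref{weineramalgconv} in its $\mathbb{R}^{4d}$ form) to land $Q_S T$ in $W(L^{p,q}_m)$, and then sample on the lattice. The paper's proof is much terser and simply chains these steps together, whereas you have spelled out the bookkeeping (the modulus domination for $\oast$, the symmetry between $Q_S S_0$ and $Q_{S_0} S$, and the continuity needed for pointwise sampling) that the paper leaves implicit.
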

\begin{proof}
    Combining the twisted convolution identity \cref{twistedrelation} with \cref{weineramalgconv} concerning Wiener-amalgam space convolution relations, we find:
    \begin{align*}
        \|C_S (T)\|_{\ell^{p,q}_m} &= \|Q_S T |_{\Lambda\times M}\|_{\ell^{p,q}_m} \\
        &\leq C \|Q_S T\|_{W(L^{\infty},\ell^1_v)} \\
        &\leq \|Q_{S_0} S\|_{W(L^{\infty},\ell^1_v)} \|T\|_{\mathcal{M}^{p,q}_m}.
    \end{align*}
    
\end{proof}
On the other hand the synthesis operator is similarly bounded:
\begin{proposition}
    Given $S\in\mathcal{M}^1_v$ and lattice $\Lambda \times M$, the synthesis map
    \begin{align*}
        D_S (a) = \sum_{\Lambda \times M} a_{\lambda,\mu} \gamma_{\lambda,\mu}(S),
    \end{align*}
    is a bounded map from $\ell^{p,q}_m(\Lambda \times M)$ to $\mathcal{M}^{p,q}_m$, where the sum converges unconditionally in the case $p,q<\infty$, otherwise in the weak-$*$ topology.
\end{proposition}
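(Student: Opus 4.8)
The plan is to prove boundedness of the synthesis operator $D_S$ by duality against the analysis operator, which has just been shown to be bounded. Since $\mathcal{M}^{p,q}_m$ and $\ell^{p,q}_m$ are related to their duals $\mathcal{M}^{p',q'}_{1/m}$ and $\ell^{p',q'}_{1/m}$ by the pairing inherited from $\langle\cdot,\cdot\rangle_{\mathfrak{S}',\mathfrak{S}}$ and $Q_S$, I would first establish the adjoint relation $\langle D_S a, R\rangle = \langle a, C_S R\rangle$ for $R$ in the predual. Concretely, for a finite sequence $a$ and $R\in\mathcal{M}^{p',q'}_{1/m}$,
\begin{align*}
    \langle D_S a, R\rangle_{\mathfrak{S}',\mathfrak{S}} = \Big\langle \sum_{\Lambda\times M} a_{\lambda,\mu}\gamma_{\lambda,\mu}(S), R\Big\rangle = \sum_{\Lambda\times M} a_{\lambda,\mu}\overline{Q_S R(\lambda,\mu)} = \langle a, C_S R\rangle_{\ell^{p,q}_m,\ell^{p',q'}_{1/m}}.
\end{align*}
Applying Hölder for mixed sequence spaces and the boundedness of $C_S$ from the previous proposition then yields
\begin{align*}
    |\langle D_S a, R\rangle| \leq \|a\|_{\ell^{p,q}_m}\|C_S R\|_{\ell^{p',q'}_{1/m}} \leq C\|a\|_{\ell^{p,q}_m}\|R\|_{\mathcal{M}^{p',q'}_{1/m}},
\end{align*}
and taking the supremum over $R$ in the unit ball of the predual gives $\|D_S a\|_{\mathcal{M}^{p,q}_m}\leq C\|a\|_{\ell^{p,q}_m}$ for $1\leq p,q<\infty$, with the $\infty$ endpoints handled by interpreting the norm in the weak-$*$ sense. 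Density of finite sequences then extends the estimate to all of $\ell^{p,q}_m$.

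An alternative, more self-contained route mirrors the function-space argument in Chapter 11 of \cite{grochenigtfa}: write the synthesised operator's polarised Cohen transform in terms of the twisted convolution. Using the reproducing identity \cref{twistedrelation}, one sees that $Q_{S_0}(D_S a)$ is obtained by convolving the discrete measure $\sum_{\Lambda\times M} a_{\lambda,\mu}\delta_{(\lambda,\mu)}$ with the window $Q_{S_0}S$ in the twisted sense, after accounting for the covariance of $Q_S$ under $\gamma$ shifts. I would then invoke the mixed Young's inequality \cref{twistedyoungs} together with the Wiener amalgam convolution relation \cref{weineramalgconv} and the fact that $Q_{S_0}S\in W(L^\infty,\ell^1_v)$ to bound $\|Q_{S_0}(D_S a)\|_{L^{p,q}_m}$ by $\|a\|_{\ell^{p,q}_m}\|Q_{S_0}S\|_{W(L^\infty,\ell^1_v)}$, which is precisely the $\mathcal{M}^{p,q}_m$ norm estimate after noting $\|D_S a\|_{\mathcal{M}^{p,q}_m}\asymp\|Q_S(D_S a)\|_{L^{p,q}_m}$ by \cref{equivnorms}.

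The main obstacle is the convergence claim rather than the norm bound. For $1\leq p,q<\infty$, unconditional convergence follows because finite sequences are dense and the synthesis map is bounded, so partial sums form a Cauchy net independent of ordering; here I would use that $\{\gamma_{\lambda,\mu}(S)\}$ is a Bessel-type system with coefficients controlled by the $W(L^\infty,\ell^1_v)$ norm of the window. For $p=\infty$ or $q=\infty$ the series need not converge in norm—finite sequences are not dense—so I would instead show convergence in the weak-$*$ topology induced by the predual $\mathcal{M}^{p',q'}_{1/m}$: for each fixed $R$ in the predual the pairing $\langle D_S a, R\rangle=\langle a, C_S R\rangle$ is an absolutely convergent sum since $C_S R\in\ell^{p',q'}_{1/m}$ with $p',q'<\infty$, so the partial sums converge weak-$*$. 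Care is needed to ensure the limit is independent of how the lattice is exhausted, which again follows from the absolute convergence of the dual pairing. I would close by remarking that boundedness of both $C_S$ and $D_S$, combined with the reproducing formula, is exactly what is needed for the atomic decomposition of \cref{atomdecompintro}.
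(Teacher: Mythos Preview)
Your proposal is correct, and your second route is essentially the paper's argument. The paper computes $Q_S(D_S a)(w,z)=\sum_{\lambda,\mu} a_{\lambda,\mu}\langle \gamma_{\lambda,\mu}(S),\gamma_{w,z}(S)\rangle$, majorises pointwise by the discrete convolution $|a|*\tilde S$ where $\tilde S_{\lambda,\mu}=\max_{K_{\lambda,\mu}}|Q_S S|$ is the amalgam sequence of $Q_S S\in W(L^\infty,\ell^1_v)$, and then applies Young's inequality for $\ell^{p,q}_m$. This is exactly your ``self-contained'' alternative, just phrased as a discrete convolution rather than as a twisted convolution against a measure.

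Your first route via duality is a genuinely different argument. It is slicker once the duality $(\mathcal{M}^{p,q}_m)'\cong\mathcal{M}^{p',q'}_{1/m}$ is available, since it reduces the synthesis bound immediately to the analysis bound already proven. The cost is that this duality has not been stated or proved in the paper at this point; one can obtain it directly from the $L^{p,q}_m$ duality via the isometry $Q_{S_0}$ and the reproducing identity, so there is no circularity, but it is an extra ingredient the paper avoids. The paper's direct amalgam estimate has the advantage of being entirely self-contained and of simultaneously yielding the $W(L^{p,q}_m)$ control that feeds into the later frame-operator and reconstruction results. Your treatment of convergence (unconditional for $p,q<\infty$ via density of finite sequences, weak-$*$ at the endpoints via absolute convergence of the dual pairing) is more careful than the paper's, which states but does not argue this point.
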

\begin{proof}
    A direct calculation shows
    \begin{align*}
        Q_S (D_S(a))(w,z) = \sum_{\Lambda\times M} \langle a_{\lambda,\mu}\gamma_{\lambda,\mu}(S),\gamma_{w,z}(S)\rangle.
    \end{align*}
    Let us denote $K_{\lambda,\mu} = [0,1]^{4d} + (\lambda,\mu)$, the unit square translated by the lattice point $\lambda,\mu$. We then consider the sequence
    \begin{align*}
        \{\Tilde{S}_{\lambda,\mu}\}_{\Lambda\times M} := \Big\{\mathrm{max} \{|Q_S S(w,z)| \, :\,  (w,z)\in K_{\lambda,\mu} \} \Big\}_{\Lambda\times M},
    \end{align*}
    which is well defined since $Q_S S \in W(L^{\infty},\ell^1_v)$. It then follows that
    \begin{align*}
        \|Q_S (D_S(a))\|_{L^{p,q}} &\leq \sum_{\Lambda\times M} |a_{\lambda,\mu}\langle S, \gamma_{w-\lambda,z-\mu}(S)\rangle| \\
        &\leq C_1 \|a * \Tilde{S}\|_{\ell^{p,q}_m} \\
        &\leq C_2 \|a\|_{\ell^{p,q}_m} \|Q_S S\|_{W(L^{\infty},\ell^1_v)}
    \end{align*}
    where $C_1$ depends on the lattice and weight function $m$, and $C_2$ depends on the weight function $m$.
    
\end{proof}
Finally combining the two we get the boundedness of the frame operator:
\begin{proposition}
    Given $S,T\in\mathcal{M}^1_v$ and lattice $\Lambda \times M$, the frame operator
    \begin{align*}
        E_{S,T} (a) = D_T C_S,
    \end{align*}
    is a bounded map from $\mathcal{M}^{p,q}_m$ to itself.
\end{proposition}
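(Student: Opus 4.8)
The plan is to recognise $E_{S,T}$ as the composition of the two maps whose boundedness we have just established. By definition $E_{S,T} = D_T \circ C_S$, where $C_S$ is the analysis operator associated to the window $S$ and $D_T$ is the synthesis operator associated to the window $T$; acting on an operator $T' \in \mathcal{M}^{p,q}_m$ it returns $E_{S,T}(T') = D_T C_S(T')$. Since both $S$ and $T$ lie in $\mathcal{M}^1_v$ by hypothesis, the two preceding propositions apply simultaneously: the analysis map $C_S$ is bounded from $\mathcal{M}^{p,q}_m$ into $\ell^{p,q}_m(\Lambda \times M)$, and the synthesis map $D_T$ is bounded from $\ell^{p,q}_m(\Lambda \times M)$ back into $\mathcal{M}^{p,q}_m$.

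First I would simply chain the two norm estimates. For $T' \in \mathcal{M}^{p,q}_m$, the analysis bound gives $\|C_S(T')\|_{\ell^{p,q}_m} \leq C_1 \|Q_{S_0} S\|_{W(L^{\infty},\ell^1_v)} \|T'\|_{\mathcal{M}^{p,q}_m}$, and feeding the resulting sequence into the synthesis bound yields $\|D_T C_S(T')\|_{\mathcal{M}^{p,q}_m} \leq C_2 \|Q_T T\|_{W(L^{\infty},\ell^1_v)} \|C_S(T')\|_{\ell^{p,q}_m}$. Composing these produces $\|E_{S,T}(T')\|_{\mathcal{M}^{p,q}_m} \leq C \|T'\|_{\mathcal{M}^{p,q}_m}$ with a constant depending only on the lattice, the weights $m$ and $v$, and the two fixed windows, which is precisely the asserted boundedness.

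The only point requiring a moment's care is that the synthesis proposition controls the $L^{p,q}_m$ norm of $Q_T(D_T(a))$ rather than of $Q_{S_0}(D_T(a))$, which is what the $\mathcal{M}^{p,q}_m$ norm literally uses. This is harmless: by \cref{equivnorms} any window in $\mathcal{M}^1_v$, in particular $T$, induces an equivalent norm on $\mathcal{M}^{p,q}_m$, so the two quantities differ by fixed multiplicative constants that can be absorbed into $C$. I expect no genuine obstacle here, since the statement is essentially the composition of two results already proved; the substantive work lay in establishing the boundedness of the analysis and synthesis operators separately, and those bounds transfer directly. The unconditional convergence of the synthesis sum for $p,q < \infty$, and convergence in the weak-$*$ topology otherwise, is likewise inherited verbatim from the synthesis proposition.
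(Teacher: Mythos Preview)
Your proposal is correct and matches the paper's approach exactly: the paper gives no separate proof for this proposition, presenting it simply as the composition of the two preceding boundedness results for $C_S$ and $D_T$. Your additional remark about passing between the $Q_T$- and $Q_{S_0}$-norms via \cref{equivnorms} is a valid clarification that the paper leaves implicit.
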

The existence of frames (and dual frames) in $\mathcal{M}^1_v$ again follows from the coincidence of $\mathcal{M}^1_v$ with symbols in $M^1_v(\mathbb{R}^{2d})$. Crucially the above results give us the discrete characterisation of the $\mathcal{M}^{p,q}_m$ spaces which we will often make us of:
\begin{theorem}\label{discmodspacechar}
    An operator $T\in \mathcal{M}^{\infty}$ is in the space $\mathcal{M}^{p,q}_m$ if and only if $C_S (T) \in \ell^{p,q}_m$ for some $S\in\mathcal{M}^1_v$ which generates a Gabor frame. 
\end{theorem}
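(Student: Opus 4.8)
The plan is to prove the two implications separately, observing that the forward direction is already in hand and that all the work lies in the converse. Throughout, fix a weight situation as in the statement and recall the duality $\mathcal{M}^\infty=(\mathcal{M}^1)'$ from the Gelfand triple $(\mathcal{M}^1,\mathcal{HS},\mathcal{M}^\infty)$.

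For the forward direction, suppose $T\in\mathcal{M}^{p,q}_m$. Every $S\in\mathcal{M}^1_v$ satisfies $Q_{S_0}S\in W(L^\infty,\ell^1_v)$, so the boundedness of the analysis operator $C_S:\mathcal{M}^{p,q}_m\to\ell^{p,q}_m$ established above immediately yields $C_S(T)=\{Q_ST(\lambda,\mu)\}_{\Lambda\times M}\in\ell^{p,q}_m$; in particular this holds for any $S$ generating a Gabor frame. No further argument is needed here.

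For the converse, fix $S\in\mathcal{M}^1_v$ generating a Gabor frame over $\Lambda\times M$ and assume $T\in\mathcal{M}^\infty$ with $C_S(T)\in\ell^{p,q}_m$. Let $\tilde S$ be the canonical dual atom; by the proposition on dual frames, $\tilde S\in\mathcal{M}^1_v$, so the synthesis operator $D_{\tilde S}$ is bounded from $\ell^{p,q}_m$ into $\mathcal{M}^{p,q}_m$. Define
\[
    \tilde T := D_{\tilde S}\big(C_S(T)\big)=\sum_{\Lambda\times M} Q_ST(\lambda,\mu)\,\gamma_{\lambda,\mu}(\tilde S),
\]
which by the synthesis bound belongs to $\mathcal{M}^{p,q}_m$, the series converging unconditionally when $p,q<\infty$ and weak-$*$ otherwise. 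It remains only to identify $\tilde T$ with $T$, after which $T\in\mathcal{M}^{p,q}_m$ follows. To do so I test both operators against an arbitrary $R\in\mathcal{M}^1$. Since $(S,\tilde S)$ is a dual pair, the frame expansion $R=\sum_{\Lambda\times M}Q_{\tilde S}R(\lambda,\mu)\,\gamma_{\lambda,\mu}(S)$ converges in $\mathcal{M}^1$, and pairing with $T\in\mathcal{M}^\infty$ gives
\[
    \langle T,R\rangle=\sum_{\Lambda\times M}\overline{Q_{\tilde S}R(\lambda,\mu)}\;Q_ST(\lambda,\mu).
\]
On the other hand, using $\langle\gamma_{\lambda,\mu}(\tilde S),R\rangle=\overline{Q_{\tilde S}R(\lambda,\mu)}$ and pairing the defining series of $\tilde T$ against $R$ produces the identical double sum. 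Both interchanges of summation and pairing are justified by absolute convergence: $C_S(T)\in\ell^{p,q}_m$, while $C_{\tilde S}(R)\in\ell^{p',q'}_{1/m}$ because $R\in\mathcal{M}^1$ forces $C_{\tilde S}(R)\in\ell^1_v\subset\ell^{p',q'}_{1/m}$ (the inclusion uses $p',q'\geq1$ together with $1/m\lesssim v$, which follows from $m$ being $v$-moderate), so Hölder's inequality for weighted mixed-norm sequence spaces applies. Hence $\langle\tilde T,R\rangle=\langle T,R\rangle$ for every $R\in\mathcal{M}^1$, giving $\tilde T=T$ as elements of $\mathcal{M}^\infty$ and therefore $T\in\mathcal{M}^{p,q}_m$.

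The main obstacle is precisely this identification $\tilde T=T$: the frame reconstruction $D_{\tilde S}C_S=I$ is available a priori only on $\mathcal{HS}$ (equivalently on the dense subspace $\mathcal{M}^1$), whereas $T$ may be a genuinely non-Hilbert--Schmidt element of $\mathcal{M}^\infty$. The duality computation above transfers the reconstruction to $T$ by moving it onto the \emph{test} operator $R\in\mathcal{M}^1$, where it is licit, but this requires confirming that the $\mathcal{M}^1$ frame expansion of $R$ converges in $\mathcal{M}^1$-norm (the $p=q=1$ instance of the synthesis bound, since $C_{\tilde S}(R)\in\ell^1$) and that the Hölder pairing of the two coefficient sequences is finite. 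The role of the hypothesis $T\in\mathcal{M}^\infty$ is to guarantee that $T$ is a bona fide element of $(\mathcal{M}^1)'$, so that the pairings $\langle T,R\rangle$ are defined and the identification is meaningful; in the endpoint cases $p=\infty$ or $q=\infty$ one must read all reconstructions in the weak-$*$ sense throughout.
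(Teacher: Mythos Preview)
Your proof is correct and is precisely the standard coorbit-theory argument that the paper has in mind; the paper does not spell out a proof of this theorem at all, stating it as an immediate consequence of the boundedness of $C_S$, $D_S$, and the fact that dual windows remain in $\mathcal{M}^1_v$, so your write-up actually supplies the details the paper omits. The duality manoeuvre you use to identify $\tilde T$ with $T$---moving the reconstruction onto the test operator $R\in\mathcal{M}^1$ where it is known to hold---is exactly the right way to close the gap, and your Hölder justification is sound.
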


As in the function case we can consider the frame operator as taking the form
\begin{align*}
    E_{S,T}(\cdot) = \sum_{\Lambda\times M} \langle \cdot, \gamma_{\lambda,\mu}(S)\rangle \gamma_{\lambda,\mu}(T).
\end{align*}
It follows then that the frame operator is invariant under shifts of the type 
\begin{align*}
    E_{S,T} \mapsto \gamma_{\lambda,\mu} E_{S,T} \gamma_{\lambda,\mu}^*,
\end{align*}
from which one can see that given an operator $S\in\mathcal{HS}$ which generates a Gabor frame, the canonical dual frame is given by $\Tilde{S} = E_{S,S}^{-1}(S)$.

Finally the boundedness of the frame operator on all $\mathcal{M}^{p,q}_m$ spaces allows us to extend the reconstruction property of Gabor frames in $\mathcal{HS}$ to all $\mathcal{M}^{p,q}_m$ spaces:
\begin{corollary}\label{modspacereconstr}
    Let $S,\Tilde{S}\in\mathcal{M}^1_v$ generate dual Gabor frames on $\Lambda\times M$, that is to say $E_{\Tilde{S},S} = I_{\mathcal{HS}}$. Then for $T\in\mathcal{M}^{p,q}_m$;
    \begin{align*}
        T = \sum_{\Lambda\times M} Q_S T(\lambda,\mu) \gamma_{\lambda,\mu}(\Tilde{S}),
    \end{align*}
    where the sum converges unconditionally for $p,q<\infty$, otherwise in the weak-$*$ topology.
\end{corollary}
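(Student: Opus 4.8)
The plan is to recognise the series in the statement as the frame operator $E_{S,\tilde S}$ applied to $T$ and then to show that $E_{S,\tilde S}$ acts as the identity on all of $\mathcal M^{p,q}_m$. Using $Q_S T(\lambda,\mu)=\langle T,\gamma_{\lambda,\mu}(S)\rangle$ together with the explicit form of the frame operator recorded just above the corollary, the right-hand side is precisely
\begin{align*}
    E_{S,\tilde S}(T) = \sum_{\Lambda\times M}\langle T,\gamma_{\lambda,\mu}(S)\rangle\,\gamma_{\lambda,\mu}(\tilde S).
\end{align*}
First I would settle the statement on $\mathcal{HS}$ by upgrading the hypothesis $E_{\tilde S,S}=I_{\mathcal{HS}}$ to its companion $E_{S,\tilde S}=I_{\mathcal{HS}}$. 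This is the standard dual-frame duality: pairing the identity $E_{\tilde S,S}R=R$ against an arbitrary $K\in\mathcal{HS}$ and moving the summation onto the second slot yields $\langle R,K\rangle=\langle R,E_{S,\tilde S}K\rangle$ for all $R,K\in\mathcal{HS}$, hence $E_{S,\tilde S}K=K$. Thus the reconstruction formula holds verbatim for every $T\in\mathcal{HS}$.

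For finite indices $p,q<\infty$ I would promote this to $\mathcal M^{p,q}_m$ by boundedness plus density. By the preceding propositions on the analysis and synthesis maps, $E_{S,\tilde S}=D_{\tilde S}\,C_S$ factors through $C_S\colon\mathcal M^{p,q}_m\to\ell^{p,q}_m$ and $D_{\tilde S}\colon\ell^{p,q}_m\to\mathcal M^{p,q}_m$, both bounded since $S,\tilde S\in\mathcal M^1_v$, so $E_{S,\tilde S}$ is bounded on $\mathcal M^{p,q}_m$. The Schwartz operators $\mathfrak S$ (and in particular $\mathcal{HS}$) are norm-dense in $\mathcal M^{p,q}_m$ for $p,q<\infty$, which follows from the vector-valued modulation-space description of \cref{wieneramalg} exactly as Schwartz functions are dense in the classical $M^{p,q}_m$. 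A bounded operator agreeing with the identity on a dense subspace is the identity, so $E_{S,\tilde S}=I$ on $\mathcal M^{p,q}_m$. The claimed unconditional convergence is inherited directly from the synthesis operator $D_{\tilde S}$, whose unconditional convergence on $\ell^{p,q}_m$ for finite indices was already established.

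The remaining case $p=\infty$ or $q=\infty$ is the delicate one, since norm density of $\mathcal{HS}$ fails, and here I would argue by weak-$*$ duality against the associate (pre)dual space, $\mathcal M^{p',q'}_{1/m}$ (for $\mathcal M^{\infty}$ this is the Feichtinger operators $\mathcal M^1$). The crucial identity is the adjoint relation $\langle E_{S,\tilde S}T,R\rangle=\langle T,E_{\tilde S,S}R\rangle$, obtained by interchanging the sum with the pairing, which merely swaps which window occupies the analysis slot. For $R$ in the predual, in particular $R\in\mathcal M^1_v\subset\mathcal{HS}$, the hypothesis gives $E_{\tilde S,S}R=R$, whence $\langle E_{S,\tilde S}T,R\rangle=\langle T,R\rangle$ for all test operators $R$, which is exactly weak-$*$ convergence of the reconstruction series to $T$. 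The main obstacle is justifying this endpoint interchange rigorously: one must verify the pairing is well defined and the summation-pairing swap is legitimate, which I would do through the $\ell^{p,q}_m$–$\ell^{p',q'}_{1/m}$ Hölder estimate combined with the twisted-convolution identity \cref{twistedrelation} and the Wiener-amalgam bound $Q_{S_0}\tilde S\in W(L^\infty,\ell^1_v)$ underlying the discrete characterisation \cref{discmodspacechar}, and to correctly identify the predual so that the test operators land in $\mathcal{HS}$ where the hypothesis is available.
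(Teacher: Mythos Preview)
Your proposal is correct and follows precisely the approach the paper intends: the corollary is stated without proof, immediately after the sentence ``the boundedness of the frame operator on all $\mathcal{M}^{p,q}_m$ spaces allows us to extend the reconstruction property of Gabor frames in $\mathcal{HS}$ to all $\mathcal{M}^{p,q}_m$ spaces,'' so the implicit argument is exactly boundedness of $E_{S,\tilde S}=D_{\tilde S}C_S$ together with density (finite indices) and weak-$*$ duality (endpoint cases). Your explicit check that the hypothesis $E_{\tilde S,S}=I_{\mathcal{HS}}$ yields the needed $E_{S,\tilde S}=I_{\mathcal{HS}}$ via $E_{\tilde S,S}^*=E_{S,\tilde S}$ is a detail the paper leaves tacit but which is indeed required.
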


\section{Properties of $\mathcal{M}^{p,q}_m$ Spaces}
Equipped with a frame characterisation for the $\mathcal{M}^{p,q}_m$ spaces, we proceed to investigate the properties of the operators in these classes in this section. In \cite{cordero03}, Schatten properties are presented for $\sigma_S\in M^p(\mathbb{R}^{2d})$ in the case $1\leq p \leq 2$, and for $\sigma_S\in M^{p,p'}(\mathbb{R}^{2d})$ in the case $2 \leq p \leq \infty$ where $\frac{1}{p}+\frac{1}{p'} = 1$, using interpolation between the cases $p=1,2$. The same approach can be used for operators $S$ with $\sigma_S\in M^p(\mathbb{R}^{2d})$, where now $2\leq p\leq \infty$, with the correspondence for Feichtinger operators established in \cite{feich22}.
\begin{proposition}\label{schattenincl}
The following inclusions are densely continuous in the norm topology for $p<\infty$ and in the weak-$*$ topology for $p=\infty$:
\begin{enumerate}
    \item For $1\leq p \leq 2$, $\mathcal{M}^p \subset \mathcal{S}^p$.
    \item For $2\leq p \leq \infty$, $\mathcal{S}^p \subset \mathcal{M}^p$.
\end{enumerate}
\end{proposition}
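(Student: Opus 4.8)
The plan is to establish the two inclusions at the endpoints $p=1$ and $p=2$, and then obtain the full range by complex interpolation, exactly mirroring the strategy of \cite{cordero03} but now phrased on the operator side using the identification of $\mathcal{M}^p$ with operators whose Weyl symbol lies in $M^p(\mathbb{R}^{2d})$ (valid in the unmixed case $p=q$, as noted after \cref{mixedpqdef}). The key endpoint facts I would use are: at $p=2$, $\mathcal{M}^2 = \mathcal{HS} = \mathcal{S}^2$ isometrically, since $M^2(\mathbb{R}^{2d}) = L^2(\mathbb{R}^{2d})$ and Weyl quantisation is unitary from $L^2(\mathbb{R}^{2d})$ onto $\mathcal{HS}$; this gives \emph{both} inclusions as equalities at $p=2$. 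At $p=1$, I would invoke \cref{feichopequiv}, whose final condition identifies $\mathcal{M}^1$ with the nuclear operators $\mathcal{N}(M^1;M^1)$; since nuclear operators on a Hilbert space are trace class, this yields $\mathcal{M}^1 \subset \mathcal{S}^1$ continuously.

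For statement (1), I would interpolate the two endpoint inclusions $\mathcal{M}^1 \subset \mathcal{S}^1$ and $\mathcal{M}^2 \subset \mathcal{S}^2$. The relevant interpolation spaces are standard: the complex interpolation scale of modulation spaces gives $[\mathcal{M}^1,\mathcal{M}^2]_\theta = \mathcal{M}^p$ with $\tfrac{1}{p} = 1-\tfrac{\theta}{2}$ (inherited from $[M^1,M^2]_\theta = M^p$ on the symbol level), while the Schatten classes interpolate as $[\mathcal{S}^1,\mathcal{S}^2]_\theta = \mathcal{S}^p$. Since the inclusion map $\mathcal{M}^p \hookrightarrow \mathcal{S}^p$ is the restriction of a single fixed linear map (the identity on operators) that is bounded at both endpoints, interpolation gives boundedness for all intermediate $1 \leq p \leq 2$. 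For statement (2), I would dualise: the inclusion $\mathcal{M}^1 \subset \mathcal{S}^1$ has adjoint, under the respective dualities $(\mathcal{M}^1)' = \mathcal{M}^\infty$ and $(\mathcal{S}^1)' = \mathcal{L}(L^2)=\mathcal{S}^\infty$, the inclusion $\mathcal{S}^\infty \subset \mathcal{M}^\infty$; combining this endpoint with the equality $\mathcal{S}^2 = \mathcal{M}^2$ and interpolating again yields $\mathcal{S}^p \subset \mathcal{M}^p$ for $2 \leq p \leq \infty$. Density of the inclusions follows from the density of finite-rank (or Schwartz) operators in each space for $p<\infty$, with the weak-$*$ statement at $p=\infty$ handled via the predual $\mathcal{M}^1$.

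The main obstacle I anticipate is \emph{justifying the interpolation identities rather than the inclusions themselves}. Verifying $[\mathcal{M}^1,\mathcal{M}^2]_\theta = \mathcal{M}^p$ requires that the complex interpolation functor commutes with the Weyl correspondence and respects the retract structure furnished by the polarised Cohen's class; concretely, one wants $Q_{S_0}$ and its (bounded) left inverse $Q_{S_0}^*$ to act as a bounded retraction of the interpolation scale of $L^p(\mathbb{R}^{4d})$-type spaces onto the $\mathcal{M}^p$ scale, so that interpolation of the coorbit spaces reduces to interpolation of the ambient $L^p$ spaces—a standard coorbit-theoretic argument, but one that must be checked against the precise norm equivalences from \cref{equivnorms}. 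The dualisation step in (2) also requires care, since one must confirm that the bracket realising $(\mathcal{S}^1)' = \mathcal{S}^\infty$ agrees, after the Weyl identification, with the $(\mathfrak{S},\mathfrak{S}')$ pairing underlying $(\mathcal{M}^1)'=\mathcal{M}^\infty$; once these pairings are matched, the adjoint of a bounded inclusion is automatically the expected bounded inclusion.
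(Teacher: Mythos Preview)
Your proposal is correct and follows the same interpolation strategy as the paper, with one noteworthy difference in how the $p=\infty$ endpoint for part (2) is secured. The paper simply cites \cite{cordero03} for part (1) and focuses on (2), where it obtains the endpoint inclusion $\mathcal{S}^{\infty}\subset\mathcal{M}^{\infty}$ directly from the identification $\mathcal{M}^{\infty}\cong\mathcal{L}(M^1;M^{\infty})$, which trivially contains $\mathcal{L}(L^2)$; it then interpolates $[\mathcal{HS},\mathcal{K}(L^2)]_{\theta}=\mathcal{S}^p$ against $[M^2,M^{\infty}]_{\theta}=M^p$ on the symbol side. Your route to the same endpoint via the adjoint of $\mathcal{M}^1\hookrightarrow\mathcal{S}^1$ is legitimate but, as you yourself flag, requires matching the $(\mathcal{S}^1)'$ and $(\mathcal{M}^1)'$ pairings---work that the paper's direct argument sidesteps entirely. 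For density in the range $2<p\leq\infty$, the paper is slightly more explicit than your sketch: it truncates the Gabor frame expansion of \cref{modspacereconstr} to finite sublattices, producing finite-rank operators from $M^{\infty}$ to $M^1$ that lie in every $\mathcal{S}^p$ and approximate in $\mathcal{M}^p$.
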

\begin{proof}
    The first case of inclusion in the proposition is known \cite{cordero03}, we concern ourselves with the latter. We have the endpoints $\mathcal{M}^2 = \mathcal{HS}$ and $\mathcal{M}^{\infty} = B(M^1;M^{\infty})$. In both cases, $i=2,\infty$, the inclusion $\mathcal{S}^{i} \hookrightarrow \mathcal{M}^{i}$ is continuous. Hence using complex interpolation (cf. \cite{Si05}, \cite{Fe83}) with $\theta=1-\frac{2}{p}$
    \begin{align*}
        \mathcal{S}^p &= [\mathcal{HS},\mathcal{K}(L^2)]_{\theta},\\
        M^p &= [M^2,M^{\infty}]_{\theta},
    \end{align*}
the inclusion result follows for $2\leq p\leq \infty$.
The inclusions are dense, in the norm topology for $p<\infty$ and the weak-$*$ topology for $p=\infty$. In the case $1\leq p \leq 2$, this follows from the density of $M^1(\mathbb{R}^d)\subset L^2(\mathbb{R}^d)$. In the $2<p\le\infty$ case, we consider the restriction of the decomposition to finite subsets of the lattice;
\begin{align*}
    T_n = \sum_{(\lambda,\mu)\in\Lambda_n\times\Lambda_n} Q_S T(\lambda,\mu) \pi(\mu)\Tilde{g} \otimes \pi(\lambda) \Tilde{g}
\end{align*}
Such operators are of finite rank operators from $M^{\infty}(\mathbb{R}^d)$ to $M^1(\mathbb{R}^d)$ and hence in every $\mathcal{S}^p$ class, and are dense in $\mathcal{M}^p$. Hence $\mathcal{S}^p$ is dense in $\mathcal{M}^p$.

\end{proof}

In the case of mixed-norm $\mathcal{M}^{p,q}$ spaces, discretisation gives an easy means to show the following:
\begin{proposition}\label{singvalsdecomp}
    Let $T\in\mathcal{M}^{p,q}$. Then there exists a decomposition 
    \begin{align*}
        T = \sum_{n\in\mathbb{N}} s_n \phi_n \otimes \psi_n,
    \end{align*}
    where $\{s_n\}_N \in \ell^q$, $\phi_n \in M^1(\mathbb{R}^{d})$ and $\psi_n \in M^p(\mathbb{R}^{d})$ such that $\|\phi_n\|_{M^1}=1$ and $\|\psi_n\|_{M^p}=1$ for all $n$. The sum is understood to converge unconditionally for $p,q < \infty$, otherwise in weak-$*$ topology. 
\end{proposition}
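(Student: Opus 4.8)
The plan is to apply the rank-one atomic decomposition of \cref{modspacereconstr} and then regroup the resulting double series, synthesising the inner summation into functions of $M^p(\mathbb{R}^d)$ while the outer coefficients land in $\ell^q$. The essential point is to resist flattening the double-indexed sum into a single one: a naive re-indexing would require an embedding $\ell^{p,q}\hookrightarrow\ell^q$, which fails unless $p\leq q$, whereas keeping the mixed structure and summing the $w$-variable first respects exactly the Lebesgue--Bochner picture already recorded for $\mathcal{M}^{p,q}$.

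Concretely, I would first fix windows $g,\Tilde g\in M^1(\mathbb{R}^d)$ generating dual Gabor frames for $L^2(\mathbb{R}^d)$ on lattices $\Lambda, M$ (their existence, and the membership of the dual window in $M^1$, are standard; cf.\ \cite{grochenigtfa}). By \cref{balazsframe} the rank-one operators $S=g\otimes g$ and $\Tilde S=\Tilde g\otimes\Tilde g$ lie in $\mathcal{M}^1$ and, by \cite{balasz2008}, form a dual pair of Gabor frames for $\mathcal{HS}$ on $\Lambda\times M$. \cref{modspacereconstr} then yields, for $T\in\mathcal{M}^{p,q}$,
\begin{align*}
    T=\sum_{(\lambda,\mu)\in\Lambda\times M} Q_S T(\lambda,\mu)\,\pi(\mu)\Tilde g\otimes\pi(\lambda)\Tilde g,
\end{align*}
while \cref{discmodspacechar} guarantees $\{Q_S T(\lambda,\mu)\}\in\ell^{p,q}(\Lambda\times M)$, the inner $\ell^p$ summation running over $\lambda$ (the $w$-variable) and the outer $\ell^q$ over $\mu$.

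Next, for each fixed $\mu$ I would factor the constant first tensor leg out of the inner sum, writing the $\mu$-block as the single rank-one operator $\pi(\mu)\Tilde g\otimes h_\mu$ with $h_\mu:=\sum_\lambda\overline{Q_S T(\lambda,\mu)}\,\pi(\lambda)\Tilde g$. Since $\Tilde g\in M^1$, the function-level synthesis operator is bounded from $\ell^p$ to $M^p$, so $h_\mu\in M^p(\mathbb{R}^d)$ with $\|h_\mu\|_{M^p}\leq C\|\{Q_S T(\lambda,\mu)\}_\lambda\|_{\ell^p}$. Normalising, I set $\phi_\mu=\pi(\mu)\Tilde g/\|\pi(\mu)\Tilde g\|_{M^1}$, $\psi_\mu=h_\mu/\|h_\mu\|_{M^p}$ and $s_\mu=\|\pi(\mu)\Tilde g\|_{M^1}\,\|h_\mu\|_{M^p}$; time-frequency shift invariance of the (unweighted) $M^1$-norm makes the first leg's norm constant in $\mu$, so that $\|\{s_\mu\}\|_{\ell^q}\leq C'\big(\sum_\mu\|\{Q_S T(\lambda,\mu)\}_\lambda\|_{\ell^p}^{q}\big)^{1/q}=C'\|\{Q_S T(\lambda,\mu)\}\|_{\ell^{p,q}}<\infty$. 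This delivers the claimed $\ell^q$ summability together with $\phi_\mu\in M^1$, $\psi_\mu\in M^p$ of unit norm, after re-indexing $\mu$ by $\mathbb{N}$.

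The main obstacle is purely one of convergence and regrouping at the endpoints. For $p,q<\infty$ the series of \cref{modspacereconstr} converges unconditionally, so grouping terms by $\mu$ is legitimate and the rearranged sum $\sum_\mu\pi(\mu)\Tilde g\otimes h_\mu$ again converges unconditionally. When $p=\infty$ or $q=\infty$ one has only weak-$*$ convergence against the predual $\mathcal{M}^1$, and the factoring-and-regrouping step must be carried out in that topology; this is routine but requires checking, exactly as in the function case (cf.\ Chapter 11 of \cite{grochenigtfa}), that the partial sums remain uniformly bounded in $\mathcal{M}^{p,q}$ so that the limit is unaffected by the regrouping.
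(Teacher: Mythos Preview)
Your proposal is correct and follows essentially the same route as the paper: apply \cref{modspacereconstr} with a rank-one window $S=g\otimes g$, then for each fixed $\mu$ absorb the inner $\lambda$-sum into a single $M^p$-function via the function-level synthesis operator, and let $s_\mu$ be the $\ell^p$-norm of the $\mu$-th row. Your treatment is in fact slightly more careful than the paper's (you include the complex conjugate in $h_\mu$ required by the antilinearity of the second tensor leg, explicitly normalise the $\phi_\mu$, and address the weak-$*$ regrouping at the endpoints), but the underlying idea is identical.
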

\begin{proof}
    Let $T\in\mathcal{M}^{p,q}$. Using \cref{modspacereconstr}, and let $S=g\otimes g$ for some $g\in M^1(\mathbb{R}^d)$, such that $g$ generates a Gabor frame on $\Lambda$. We then have
    \begin{align*}
        T = \sum_{\Lambda\times \Lambda} Q_S T(\lambda,\mu) \gamma_{\lambda,\mu}(\Tilde{S})
    \end{align*}
    with $\{Q_S T\}_{\Lambda\times\Lambda}\in \ell^{p,q}$. For a fixed $\mu$, define $\psi_{\mu} := \frac{\sum_{\lambda} Q_S T(\lambda,\mu) \pi(\lambda)\Tilde{g}}{\| Q_S T(\lambda,\mu) \|_{\ell^p}}$, which is a normalised element of $M^p(\mathbb{R}^d)$, by the discrete characterisation of function modulation spaces. Furthermore the sequence $s=\{s_{\mu}\}_{\mu\in \Lambda}$, where for each fixed $\mu$, $s_{\mu} := \| Q_S T(\lambda,\mu) \|_{\ell^p}$, is an element of $\ell^q$ by \cref{discmodspacechar}. Hence the decomposition
    \begin{align*}
        T = \sum_{\mu\in \Lambda} s_{\mu}  \pi(\mu)\Tilde{g}\otimes \psi_{\mu}
    \end{align*}
    satisfies the claim.
    
\end{proof}
\begin{corollary}\label{compactop}
    For any $1 \leq p,q < \infty$, $\mathcal{M}^{p,q}\subset \mathcal{K}(M^{p'};M^q)$.
\end{corollary}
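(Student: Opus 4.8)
The plan is to exhibit every $T\in\mathcal{M}^{p,q}$ as an operator-norm limit of finite-rank maps $M^{p'}(\mathbb{R}^d)\to M^q(\mathbb{R}^d)$; since the compact operators are norm-closed in $\mathcal{L}(M^{p'};M^q)$ and finite-rank operators are compact, this yields $T\in\mathcal{K}(M^{p'};M^q)$. I would begin from the explicit decomposition produced in \cref{singvalsdecomp} (equivalently \cref{modspacereconstr} with rank-one window $S=g\otimes g$), writing $T=\sum_{\mu\in\Lambda}s_\mu\,(\pi(\mu)\tilde g)\otimes\psi_\mu$, where $\tilde g\in M^1$ is the dual Gabor atom, $\psi_\mu=\|Q_ST(\cdot,\mu)\|_{\ell^p}^{-1}\sum_\lambda Q_ST(\lambda,\mu)\pi(\lambda)\tilde g$ is a unit vector of $M^p$, and $s_\mu=\|Q_ST(\cdot,\mu)\|_{\ell^p}$ defines a sequence in $\ell^q$ by \cref{discmodspacechar}. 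Truncating the $\mu$-sum to a finite family $F_N\uparrow\Lambda$ gives finite-rank operators $T_N$.

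The core estimate is to bound $\|T-T_N\|_{M^{p'}\to M^q}$ by a tail of $\{s_\mu\}$. For $f$ in the unit ball of $M^{p'}$, I would write $(T-T_N)f=\sum_{\mu\notin F_N}a_\mu(f)\,\pi(\mu)\tilde g$ with $a_\mu(f)=s_\mu\langle f,\psi_\mu\rangle$. The key structural observation is that the right-hand side is a Gabor synthesis of $\tilde g$, so the synthesis boundedness $\ell^q(\Lambda)\to M^q$ (valid because $\tilde g\in M^1$) gives $\|(T-T_N)f\|_{M^q}\le C\,\|a(f)\|_{\ell^q}$. Using the duality $M^p=(M^{p'})^\ast$ (and $M^\infty=(M^1)^\ast$ in the endpoint $p=1$), together with $\|\psi_\mu\|_{M^p}=1$, one has $|\langle f,\psi_\mu\rangle|\le C'\|f\|_{M^{p'}}$, so that $\|a(f)\|_{\ell^q}\le C'\big(\sum_{\mu\notin F_N}s_\mu^q\big)^{1/q}$ uniformly in $f$. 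Since $\{s_\mu\}\in\ell^q$ and $q<\infty$, the tail $\big(\sum_{\mu\notin F_N}s_\mu^q\big)^{1/q}$ tends to $0$ as $F_N$ exhausts $\Lambda$, whence $\|T-T_N\|_{M^{p'}\to M^q}\to0$ and $T$ is compact.

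The main obstacle is precisely that one cannot sum the rank-one pieces $s_\mu\,(\pi(\mu)\tilde g)\otimes\psi_\mu$ in operator norm by brute force: each piece is only uniformly bounded as a map $M^{p'}\to M^q$, and a naive triangle inequality would require $\{s_\mu\}\in\ell^1$ rather than the available $\ell^q$. The decisive step is therefore to resist splitting off the individual rank-one terms and instead keep the synthesis intact, so that Gabor synthesis boundedness converts the operator-norm tail into the $\ell^q$ coefficient tail; this is exactly where the asymmetric mixed-norm structure ($\ell^p$-summation in the $\lambda$-variable collapsed into $s_\mu$ via H\"older, and $\ell^q$-summation surviving in $\mu$) is essential. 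The only point needing care beyond this is the duality pairing at the endpoint $p=1$, which I would treat via $M^\infty=(M^1)^\ast$ as indicated.
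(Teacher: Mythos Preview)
Your proposal is correct and follows exactly the route the paper intends: the corollary is stated without proof immediately after \cref{singvalsdecomp}, and your argument supplies precisely the details implicit there---truncate the decomposition $T=\sum_\mu s_\mu\,\pi(\mu)\tilde g\otimes\psi_\mu$ to finite rank and control the tail in operator norm via Gabor synthesis boundedness $\ell^q\to M^q$ together with the duality bound $|\langle f,\psi_\mu\rangle|\lesssim\|f\|_{M^{p'}}$. Your observation that one must keep the synthesis intact (rather than estimating rank-one pieces individually, which would need $\ell^1$) is exactly the point.
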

In this sense, the Weyl product relations from \cite{holst07} become evident, wherein the composition of operators $S\in \mathcal{M}^{p,q}$,$T\in\mathcal{M}^{p',q'}$ are contained in another operator modulation class based on $p,q,p',q'$. In particular, for $p>2$, one does not in general have that $\mathcal{M}^p \circ \mathcal{M}^p\subset\mathcal{M}^p$, since $M^{p}(\mathbb{R}^{d})\not\subset M^{p'}(\mathbb{R}^{d})$ and so the codomain of $S \in \mathcal{M}^{p}$ is not necessarily contained in $M^{p'}(\mathbb{R}^{d})$. Hence the composition $T \circ T$ for some $T\in \mathcal{M}^{p}$ is not necessarily bounded on $M^p(\mathbb{R}^{d})$.

One may show the stronger result that $\mathcal{M}^{p,q}$ operators are also $p$-summing. For this, a similar approach to that used for the classical Hille-Tamarkin operators (cf. \cite{labate01}) is used, along with the infinite matrix characterisation of $\mathcal{M}^{p,q}$ in the same manner as in \cref{compactop}:
\begin{proposition}\label{psumming}
    For $1\leq p,q < \infty$, $\mathcal{M}^{p,q}\subset \Pi^q(M^{p'};M^q)$, where $\Pi^p(M^{p'};M^q)$ is the space of $p$-summing operators from $M^{p'}(\mathbb{R}^{d})$ to $M^q(\mathbb{R}^{d})$.
\end{proposition}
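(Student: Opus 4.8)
The plan is to exploit the atomic decomposition underlying \cref{singvalsdecomp} together with the Gabor synthesis bounds from the discretisation machinery, following the Hille--Tamarkin template. By the construction in the proof of \cref{singvalsdecomp} I would write any $T\in\mathcal{M}^{p,q}$ as $T=\sum_{\mu} s_\mu\, \pi(\mu)\tilde g\otimes\psi_\mu$, where $\{s_\mu\}\in\ell^q$ with $\|\{s_\mu\}\|_{\ell^q}\asymp\|T\|_{\mathcal{M}^{p,q}}$ (by \cref{discmodspacechar}), the synthesis atoms $\pi(\mu)\tilde g$ are Gabor translates of a single dual atom $\tilde g\in M^1$, and each $\psi_\mu\in M^p$ is $M^p$-normalised. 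For $f\in M^{p'}$ this gives the explicit action $Tf=\sum_\mu s_\mu\langle f,\psi_\mu\rangle\,\pi(\mu)\tilde g$, where $\langle\cdot,\cdot\rangle$ denotes the $M^{p'}$--$M^p$ duality.

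The first step is a norm bound on the output. Since $\tilde g\in M^1$, the Gabor synthesis operator $a\mapsto\sum_\mu a_\mu\pi(\mu)\tilde g$ is bounded from $\ell^q$ to $M^q$ (the function-level discrete characterisation of $M^q$), so
\[
\|Tf\|_{M^q}\le C\,\big\|(s_\mu\langle f,\psi_\mu\rangle)_\mu\big\|_{\ell^q}=C\Big(\sum_\mu s_\mu^{\,q}\,|\langle f,\psi_\mu\rangle|^q\Big)^{1/q}.
\]

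The second step is the $q$-summing estimate itself. For any finite family $f_1,\dots,f_N\in M^{p'}$, raising the previous bound to the $q$-th power, summing in $i$ and interchanging the finite sums gives
\[
\sum_{i}\|Tf_i\|_{M^q}^q\le C^q\sum_\mu s_\mu^{\,q}\sum_{i}|\langle f_i,\psi_\mu\rangle|^q.
\]
Here is the Hille--Tamarkin trick: since $\|\psi_\mu\|_{M^p}=1$ and $M^p$ sits isometrically in the dual of $M^{p'}$, each inner sum is dominated by the weak-$\ell^q$ norm $W^q=\sup_{\|h\|_{M^p}\le1}\sum_i|\langle f_i,h\rangle|^q$ of the family, uniformly in $\mu$. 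Factoring this out leaves $\sum_\mu s_\mu^{\,q}=\|\{s_\mu\}\|_{\ell^q}^q$, so that $\big(\sum_i\|Tf_i\|_{M^q}^q\big)^{1/q}\le C\|\{s_\mu\}\|_{\ell^q}\,W\le C'\|T\|_{\mathcal{M}^{p,q}}\,W$, which is exactly the $q$-summing inequality with $\pi_q(T)\le C'\|T\|_{\mathcal{M}^{p,q}}$.

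I expect the main obstacle to be the bookkeeping around duality at the endpoint $p=1$, where the domain is $M^{\infty}$: there the weak-$\ell^q$ norm is a supremum over $(M^\infty)^*$ rather than $M^1$, and one must observe that the normalised $\psi_\mu\in M^1$ still act as norm-one functionals on $M^\infty$ through the isometric embedding $M^1\hookrightarrow(M^\infty)^*$, so the test-functional estimate survives. The only other points requiring care are justifying the interchange of the finite $i$-sum with the $\mu$-sum and the boundedness of the synthesis step uniformly in $f$, both of which are routine given $\{s_\mu\}\in\ell^q$ and $\tilde g\in M^1$.
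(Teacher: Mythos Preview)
Your proposal is correct and follows essentially the same Hille--Tamarkin strategy as the paper: the paper works directly with the Gabor matrix $(Q_S T(\lambda,\mu))_{\lambda,\mu}$, normalising its rows as unit vectors in $\ell^p$ and bounding against the weak-$\ell^q$ norm of the Gabor coefficients $c^n\in\ell^{p'}$, while you first package those normalised rows as the $M^p$-normalised atoms $\psi_\mu$ via \cref{singvalsdecomp} and bound against the weak norm at the function-space level. The two are equivalent under the Gabor analysis/synthesis isomorphisms, and your treatment of the $p=1$ endpoint (observing that $M^1$ embeds isometrically into $(M^\infty)^*$) is the right way to close that gap.
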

\begin{proof}
    Let $\{f_n\}_{n\leq N}$ be a finite sequence of functions in $M^{p'}(\mathbb{R}^{2})$, and $g$ generate a Gabor frame on the lattice $\Lambda$ as above with dual atom $\Tilde{g}$, and $S=g\otimes g$. Denote by $c^n = \{c_{\lambda}^n\}_{\lambda} \in \ell^{p'}$ the coefficients of $f_n$ with respect to the Gabor frame generated by $g$ on $\Lambda$. Again, for any $T\in\mathcal{M}^{p,q}$, $T$ can be expressed as 
    \begin{align*}
        T = \sum_{(\lambda,\mu)\in\Lambda\times\Lambda} Q_S T(\lambda,\mu) \pi(\mu)\Tilde{g} \otimes \pi(\lambda) \Tilde{g}.
    \end{align*}
    For convenience, we use the notation $\|\cdot\|_{\ell^p(\lambda)}$ to denote that taking the $\ell^p$ norm over the sequence indexed by $\lambda$ holding other indices constant. We then see
    \begin{align*}
        \sum_{n\leq N} \|Tf_n\|_{M^q}^q &= \sum_{n\leq N} \big\|\sum_{(\lambda,\mu)\in\Lambda\times\Lambda} Q_S T(\lambda,\mu)  \langle f_n, \pi(\lambda) \Tilde{g}\rangle\pi(\mu)\Tilde{g} \big\|_{M^q}^q \\
        &\asymp \sum_{n\leq N}  \sum_{\mu\in\Lambda} \Big| \sum_{\lambda\in\Lambda} Q_S T(\lambda,\mu) \langle f_n, \pi(\lambda) \Tilde{g}\rangle\Big|^q  \\
        &= \sum_{n\leq N} \sum_{\mu\in\Lambda} \|Q_S T(\lambda',\mu)\|_{\ell^p(\lambda')}^q  \sum_{\lambda\in\Lambda}\Big| \frac{Q_S T(\lambda,\mu)}{\|Q_S T(\lambda',\mu)\|_{\ell^p(\lambda')}} \cdot \langle f_n, \pi(\lambda) \Tilde{g}\rangle\Big|^q.
    \end{align*}
    We define $g_{\mu} := \{\frac{Q_S T(\lambda,\mu)}{\|Q_S T(\lambda',\mu)\|_{\ell^p(\lambda')}}\}\in \ell^p(\lambda)$, ie the normalised rows of the infinite matrix, such that $\|g_{\mu}\|_{\ell^p}=1$. Then 
    \begin{align*}
        \sum_{n\leq N} \|Tf_n\|_{M^p}^q &\asymp \sum_{\mu\in\Lambda} \|Q_S T(\lambda,\mu)\|_{\ell^p(\lambda)}^q \sum_{n\leq N} \Big| \langle g_{\mu}, c^n\rangle_{\ell^p,\ell^{p'}} \Big|^q \\
        &\lesssim \|T\|_{\mathcal{M}^p}^q \sup_{g\in B(\ell^p)}\sum \Big| \langle g, c^n\rangle_{\ell^p,\ell^{p'}} \Big|^q,
    \end{align*}
as required.

\end{proof}
Since $M^p(\mathbb{R}^{d})$ spaces have the inclusion property $M^p(\mathbb{R}^{d})\subset M^q(\mathbb{R}^{d})$ for $p<q$, with $\|\cdot\|_{M^q}\leq \|\cdot\|_{M^p}$, for $p\leq 2$ we can consider either the restriction of an operator $T\in\mathcal{M}^p$ as a bounded operator $T\in\mathcal{L}(M^p;M^p)$, or as a bounded operator $T\in\mathcal{L}(M^{p'};M^{p'})$. Along with Section 2.b of \cite{konig}, which states that a p-summing operator from a Banach space $X$ into itself has $r$-summable eigenvalues, where $r=\max\{2,p\}$, this gives Theorem 3.1 of \cite{labate01} as a corollary:
\begin{corollary}[Theorem 3.1, \cite{labate01}]
    Let $1 \leq p \leq 2$. Then for any $T\in\mathcal{M}^p$:
    \begin{itemize}
        \item $T$ is a compact operator from $M^p(\mathbb{R}^{d})$ to $M^p(\mathbb{R}^{d})$ with 2-summable eigenvalues.
        \item $T$ is a compact operator from $M^{p'}(\mathbb{R}^{d})$ to $M^{p'}(\mathbb{R}^{d})$ with $p$-summable eigenvalues.
    \end{itemize}
\end{corollary}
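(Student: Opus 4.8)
The plan is to realise $T \in \mathcal{M}^p$ as a compact, $p$-summing self-map on each of $M^p(\mathbb{R}^d)$ and $M^{p'}(\mathbb{R}^d)$, and then read off eigenvalue summability from König's theorem together with the Schatten embedding. First I would record the two structural inputs already available: by \cref{compactop} the operator $T$ is compact from $M^{p'}(\mathbb{R}^d)$ to $M^p(\mathbb{R}^d)$, and by \cref{psumming} (with $q=p$) it is moreover $p$-summing, i.e. $T \in \Pi^p(M^{p'}; M^p)$. Because $1 \leq p \leq 2$ forces $p \leq p'$, the inclusion $\iota : M^p(\mathbb{R}^d) \hookrightarrow M^{p'}(\mathbb{R}^d)$ is bounded, and this lets me manufacture two self-maps from the single map $T$: the composition $\tilde{T} := T \circ \iota$ acting on $M^p(\mathbb{R}^d)$, and $\hat{T} := \iota \circ T$ acting on $M^{p'}(\mathbb{R}^d)$.

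Next I would invoke two ideal properties. Since the class of compact operators and the class of $p$-summing operators each form an operator ideal, both $\tilde{T}$ and $\hat{T}$ are again compact and $p$-summing, which immediately yields the compactness assertions in both bullets. For the first bullet I would then apply König's eigenvalue theorem (Section 2.b of \cite{konig}): a $p$-summing endomorphism of a Banach space has $\max\{2,p\}$-summable eigenvalues, and since $p \leq 2$ this exponent equals $2$, giving the $2$-summable eigenvalues for $\tilde{T}$ on $M^p(\mathbb{R}^d)$.

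The sharper $p$-summable conclusion of the second bullet cannot come from König alone, since König only returns $\max\{2,p\} = 2$; here I would bring in the Schatten embedding. By \cref{schattenincl}, $\mathcal{M}^p \subset \mathcal{S}^p$ for $p \leq 2$, so viewed on $L^2(\mathbb{R}^d) = M^2(\mathbb{R}^d)$ the operator $T$ has $p$-summable singular values, whence Weyl's inequality yields $p$-summable eigenvalues on $L^2(\mathbb{R}^d)$. The remaining point is that the nonzero eigenvalues of $\hat{T}$ on $M^{p'}(\mathbb{R}^d)$ coincide with those of $T$ on $L^2(\mathbb{R}^d)$: any eigenfunction of $\hat{T}$ for a nonzero eigenvalue $\lambda$ satisfies $f = \lambda^{-1}\iota T f$ and therefore lies in the range of $T$, which sits inside $M^p(\mathbb{R}^d) \subset L^2(\mathbb{R}^d) \subset M^{p'}(\mathbb{R}^d)$, so the eigenvalue equation holds simultaneously on all three spaces. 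One may equally note that $\tilde{T} = T\iota$ and $\hat{T} = \iota T$ share their nonzero spectrum by the standard $AB$/$BA$ principle. Combining these, $\hat{T}$ inherits the $p$-summable eigenvalues of $T$ on $L^2(\mathbb{R}^d)$.

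I expect the main obstacle to be exactly this reconciliation of exponents in the second bullet: König's abstract bound only gives $2$-summability, so the improvement to $p$-summability is genuinely carried by the Schatten-class membership of \cref{schattenincl} and Weyl's inequality, and the delicate step is justifying that moving between $M^{p'}(\mathbb{R}^d)$, $L^2(\mathbb{R}^d)$ and $M^p(\mathbb{R}^d)$ leaves the nonzero spectrum unchanged, which rests on the range of $T$ landing inside the small space $M^p(\mathbb{R}^d)$.
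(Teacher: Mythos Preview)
Your treatment of compactness and of the first bullet coincides with the paper's: both use the inclusion $M^p\hookrightarrow M^{p'}$ to turn $T\in\Pi^p(M^{p'};M^p)$ into a $p$-summing self-map and then invoke K\"onig's theorem, which yields $\max\{2,p\}=2$-summable eigenvalues.

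Where you diverge is the second bullet. The paper's argument is \emph{only} the sentence preceding the corollary: view $T$ as a self-map on $M^{p'}$ and apply K\"onig. But K\"onig again returns $\max\{2,p\}=2$, not $p$, so the paper's reasoning as written establishes at best $p'$-summability (since $p'\geq 2$). This is in fact what Labate's original Theorem~3.1 states; the ``$p$-summable'' in the present paper appears to be a misprint for ``$p'$-summable''. Your extra route through \cref{schattenincl}, Weyl's inequality, and the $AB/BA$ spectral-invariance argument is correct and genuinely proves the stronger $p$-summable conclusion for the eigenvalues on $M^{p'}$ (and on $M^p$ as well), so you end up establishing more than both the paper's argument and Labate's original claim. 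The spectral-invariance step you worried about is fine: writing the $L^2$-operator as $j_1 T j_2$ and $\hat T=j_2 j_1 T$, the $AB/BA$ principle for compact operators matches the nonzero eigenvalues with multiplicities.
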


We now consider how the $\mathcal{M}^{p,q}$ relate to operators with Weyl symbol in some modulation space $M^{p,q}(\mathbb{R}^{2d})$. To that end, we find from \cref{singvalsdecomp} the following:
\begin{corollary}\label{weylsymbincla}
    If $T\in\mathcal{M}^{p,1}$, then $\sigma_T \in M^{1,p}(\mathbb{R}^{2d})$.
\end{corollary}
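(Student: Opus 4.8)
The plan is to reduce to the rank-one case via the atomic decomposition of \cref{singvalsdecomp} and then bound each rank-one Wigner distribution in $M^{1,p}(\mathbb{R}^{2d})$ by a product of an $M^1$- and an $M^p$-norm. Concretely, since $T\in\mathcal{M}^{p,1}$, \cref{singvalsdecomp} (with second exponent $1$) furnishes a decomposition $T=\sum_n s_n\,\phi_n\otimes\psi_n$ with $\{s_n\}\in\ell^1$ and $\|\phi_n\|_{M^1}=\|\psi_n\|_{M^p}=1$. Passing to Weyl symbols and using $\sigma_{\phi_n\otimes\psi_n}=W(\phi_n,\psi_n)$ gives $\sigma_T=\sum_n s_n\,W(\phi_n,\psi_n)$. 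Since $M^{1,p}(\mathbb{R}^{2d})$ is a Banach space, the triangle inequality reduces the statement to the single estimate $\|W(\phi,\psi)\|_{M^{1,p}}\lesssim\|\phi\|_{M^1}\,\|\psi\|_{M^p}$, from which $\|\sigma_T\|_{M^{1,p}}\lesssim\sum_n|s_n|=\|\{s_n\}\|_{\ell^1}<\infty$, the series converging in $M^{1,p}$ by completeness (and in the weak-$*$ sense when $p=\infty$).

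To prove the rank-one estimate I would compute the STFT of $W(\phi,\psi)$ against the window $W(\varphi_0,\varphi_0)$, which is a Gaussian on $\mathbb{R}^{2d}$ and therefore lies in $M^1(\mathbb{R}^{2d})$ and defines an equivalent $M^{1,p}$-norm. By the first of the magic identities recalled in the introduction (equivalently, the rank-one instance of \cref{tfshiftsymbol} via \cref{cohensstftform}; cf.\ Lemma 2.2 of \cite{cordero03}),
\[
|V_{W(\varphi_0,\varphi_0)}W(\phi,\psi)(w,z)| = |V_{\varphi_0}\phi(w_1-\tfrac{z_1}{2},w_2+\tfrac{z_2}{2})|\cdot|V_{\varphi_0}\psi(w_1+\tfrac{z_1}{2},w_2-\tfrac{z_2}{2})|.
\]
The two arguments differ, for fixed $z$, by the constant vector $(z_1,-z_2)$, so the substitution $u=(w_1-\tfrac{z_1}{2},w_2+\tfrac{z_2}{2})$ turns the inner $L^1$-integral over $w$ (the first slot of the $M^{1,p}$-norm) into the correlation
\[
\int_{\mathbb{R}^{2d}} |V_{\varphi_0}\phi(u)|\,|V_{\varphi_0}\psi(u+(z_1,-z_2))|\,du = \big(|V_{\varphi_0}\phi|\star|V_{\varphi_0}\psi|\big)(z_1,-z_2).
\]
Taking the outer $L^p$-norm over $z$ and substituting $s=(z_1,-z_2)$, a measure-preserving change of variables, yields $\|W(\phi,\psi)\|_{M^{1,p}}=\big\||V_{\varphi_0}\phi|\star|V_{\varphi_0}\psi|\big\|_{L^p}$. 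Young's inequality $\|F\star G\|_{L^p}\le\|F\|_{L^1}\|G\|_{L^p}$ then bounds this by $\|V_{\varphi_0}\phi\|_{L^1}\|V_{\varphi_0}\psi\|_{L^p}=\|\phi\|_{M^1}\|\psi\|_{M^p}$, which is the desired rank-one estimate.

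The main obstacle is exactly the mixed-norm bookkeeping. Because the change of variables $U$ mixes the window-position variable $w$ with the frequency variable $z$, it is not a priori clear that the $L^{1,p}$-norm on $\mathbb{R}^{4d}$ should collapse to anything tractable; the crucial point is that the magic identity forces the two STFT factors to be a \emph{fixed} translate apart, which converts the $w$-integration into a correlation in $z$ and, decisively, matches the $L^1$-factor (coming from $\phi\in M^1$) with the convolution slot and the $L^p$-factor (coming from $\psi\in M^p$) with the Lebesgue slot of Young's inequality. Getting this pairing right, together with verifying the chosen $L^{1,p}$ integration order against the paper's convention, is where the care is needed; once the rank-one bound is in hand the assembly through \cref{singvalsdecomp} and the triangle inequality is routine.
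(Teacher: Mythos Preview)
Your proposal is correct and follows essentially the same route as the paper: decompose $T$ via \cref{singvalsdecomp}, apply the magic identity to reduce the $M^{1,p}$-norm of each rank-one Wigner distribution to a correlation of STFTs, and finish with Young's inequality. The paper carries out exactly this computation, writing the change of variables as $w\mapsto w-\tfrac{Jz}{2}$ and the resulting convolution as $|V_{\varphi}\phi_n|*|\check{V_{\varphi}\psi_n}|$, but the substance is identical to what you describe.
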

\begin{proof}
    If $T\in\mathcal{M}^{p,1}$, then $T$ admits a decomposition 
    \begin{align*}
        T = \sum_{n\in \mathbb{N}} s_n \phi_n \otimes \psi_n
    \end{align*}
    with $s_n,\phi_n,\psi_n$ as in \cref{singvalsdecomp} for $q=1$. Letting $S = \varphi_0 \otimes \varphi_0 $, it follows that
    \begin{align*}
        \|\sigma_T\|_{M^{1,p}} &= \|Q_S T(U^{-1}(w,z))\|_{L^{1,p}} \\
        &= \Big\| \sum_{n\in \mathbb{N}} s_n V_{\varphi} \phi_n\Big(w+\frac{Jz}{2}\Big)\overline{V_{\varphi} \psi_n\Big(w-\frac{Jz}{2}\Big)} \Big\|_{L^{1,p}} \\
        &\leq \sum_{n\in \mathbb{N}} |s_n|\cdot \Big\| V_{\varphi} \phi_n\Big(w+\frac{Jz}{2}\Big)\overline{V_{\varphi} \psi_n\Big(w-\frac{Jz}{2}\Big)} \Big\|_{L^{1,p}} \\
        &= \sum_{n\in \mathbb{N}} |s_n|\cdot \| |V_{\varphi} \phi_n| * |\Check{V_{\varphi} \psi_n}|\|_{L^{p}} \\
        &\leq \sum_{\mu\in M} |s_n|\cdot \| V_{\varphi} \phi_n \|_{L^1} \|\Check{V_{\varphi} \psi_n}\|_{L^{p}} \\
        &= \sum_{\mu\in M} |s_n|
    \end{align*}
    where we have used \cref{cohensstftform}, along with the change of variables $w \mapsto w - \frac{Jz}{2}$ and Young's inequality.

\end{proof}

\begin{corollary}\label{weylsymbinclb}
    If $\sigma_T \in M^{\infty,q}(\mathbb{R}^{2d})$, then $T\in\mathcal{M}^{q,\infty}$.
\end{corollary}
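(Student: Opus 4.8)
The plan is to transfer the statement to the symbol side, using the identification of the polarised Cohen's class with a short-time Fourier transform of the Weyl symbol, and then to exploit the fact that the $\infty$-exponent appearing in both norms lets one decouple the two halves of double phase space even though $U$ is not symplectic. First I would invoke \cref{cohensstftform} with $S=S_0=\varphi_0\otimes\varphi_0$ to write
\begin{align*}
    |Q_{S_0} T(w,z)| = |V_{\sigma_{S_0}}\sigma_T(U(w,z))|,
\end{align*}
where $\sigma_{S_0}=W(\varphi_0,\varphi_0)$ is a Gaussian on $\mathbb{R}^{2d}$ and hence lies in $M^1_v(\mathbb{R}^{2d})$. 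Since modulation space norms are independent of the window in $M^1_v$, I would use $\sigma_{S_0}$ itself as window, so that the hypothesis $\sigma_T\in M^{\infty,q}(\mathbb{R}^{2d})$ reads $\|V_{\sigma_{S_0}}\sigma_T\|_{L^{\infty,q}}<\infty$. Writing $U(w,z)=(\zeta,\xi)$ with space variable $\zeta=\big(\tfrac{w_1+z_1}{2},\tfrac{w_2+z_2}{2}\big)$ and frequency variable $\xi=(w_2-z_2,\,z_1-w_1)$, the hypothesis says precisely that $G(\xi):=\sup_\zeta |V_{\sigma_{S_0}}\sigma_T(\zeta,\xi)|$ belongs to $L^q(\mathbb{R}^{2d})$, while the conclusion $T\in\mathcal{M}^{q,\infty}$ amounts to the finiteness of
\begin{align*}
    \sup_z \Big(\int_{\mathbb{R}^{2d}} |V_{\sigma_{S_0}}\sigma_T(U(w,z))|^q\, dw\Big)^{1/q}.
\end{align*}

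The key step is a pointwise majorisation followed by a change of variables. I would bound $|V_{\sigma_{S_0}}\sigma_T(\zeta,\xi)|\leq G(\xi)$ for every $\zeta$, so that for each fixed $z$,
\begin{align*}
    \int_{\mathbb{R}^{2d}} |V_{\sigma_{S_0}}\sigma_T(U(w,z))|^q\, dw \leq \int_{\mathbb{R}^{2d}} G(\xi(w,z))^q\, dw.
\end{align*}
For fixed $z$ the map $w\mapsto \xi=(w_2-z_2,\,z_1-w_1)$ is an affine bijection of $\mathbb{R}^{2d}$ whose Jacobian determinant is $1$, so the right-hand integral equals $\|G\|_{L^q}^q$ independently of $z$. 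Taking the supremum over $z$ yields $\|T\|_{\mathcal{M}^{q,\infty}}\lesssim \|G\|_{L^q}=\|\sigma_T\|_{M^{\infty,q}}$ up to the window-equivalence constant, which is the claim. (Well-definedness is automatic, since $\sigma_T\in M^{\infty,q}\subset M^{\infty}\subset\mathscr{S}'$ gives $T\in\mathfrak{S}'$.)

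The main obstacle, and the conceptual heart of the argument, is exactly the non-symplecticity of $U$ that the paper has repeatedly flagged: one cannot transport the mixed-norm condition across $U$ by a metaplectic change of window. What rescues the estimate is that the $\infty$-exponent of $M^{\infty,q}$ sits on the $\zeta$ variable; replacing $V_{\sigma_{S_0}}\sigma_T$ by its supremum $G$ over $\zeta$ discards exactly the dependence that $U$ entangles with $z$, after which the surviving map $w\mapsto\xi$ is volume-preserving at each fixed $z$, and the outer $\infty$ of $\mathcal{M}^{q,\infty}$ absorbs the resulting $z$-independence. I would take particular care to verify the orientation of the two mixed norms (which variable carries the supremum in each of $M^{\infty,q}$ and $\mathcal{M}^{q,\infty}$) and the Jacobian of $w\mapsto\xi$, since these are the points at which a sign or ordering slip would silently destroy the decoupling.
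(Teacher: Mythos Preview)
Your argument is correct. The decoupling you exploit is exactly right: since $M^{\infty,q}$ places the supremum on the space variable $\zeta$, the pointwise bound $|V_{\sigma_{S_0}}\sigma_T(\zeta,\xi)|\le G(\xi)$ removes the only part of $U(w,z)$ that mixes $w$ and $z$ in an awkward way, and for fixed $z$ the residual map $w\mapsto\xi=(w_2-z_2,\,z_1-w_1)$ is indeed affine with Jacobian of absolute value $1$. Continuity of $V_{\sigma_{S_0}}\sigma_T$ (Schwartz window, tempered distribution) makes $G$ well defined and measurable, so the change of variables goes through and yields $\|T\|_{\mathcal{M}^{q,\infty}}\le \|\sigma_T\|_{M^{\infty,q}}$ up to window equivalence.

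The paper takes a different route: it simply dualises \cref{weylsymbincla}. There the bounded map $T\mapsto\sigma_T$ from $\mathcal{M}^{p,1}$ into $M^{1,p}(\mathbb{R}^{2d})$ is established via the atomic decomposition of \cref{singvalsdecomp} together with Young's inequality; taking adjoints through the pairing $\langle S,T\rangle_{\mathfrak{S},\mathfrak{S}'}=\langle\sigma_S,\sigma_T\rangle$ and the dualities $(M^{1,p})^*=M^{\infty,p'}$, $(\mathcal{M}^{p,1})^*=\mathcal{M}^{p',\infty}$ gives the corollary immediately. Your direct computation is more elementary and self-contained---it avoids invoking duality of the operator modulation spaces and the decomposition machinery of \cref{singvalsdecomp}---and it makes transparent \emph{why} the endpoint exponents $(\infty,q)$ and $(q,\infty)$ match up despite $U$ not being symplectic. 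The paper's proof, on the other hand, packages the two endpoint corollaries as a single duality pair, which is what feeds cleanly into the subsequent interpolation argument.
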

\begin{proof}
    This follows from \cref{weylsymbincla} and a duality argument.
    
\end{proof}
By considering the two inclusion relations \cref{weylsymbincla} and \cref{weylsymbinclb}, and recalling that $\mathcal{M}^p$ consists of operators with Weyl symbol in $M^p(\mathbb{R}^{2d})$, we can use an interpolation argument to extend the previous results:
\begin{theorem}
    Given $1\leq p \leq q \leq \infty$, if $T\in\mathcal{M}^{q,p}$ then $\sigma_T \in M^{p,q}(\mathbb{R}^{2d})$. Conversely given \newline $1 \leq q \leq p \leq \infty$, if $\sigma_T \in M^{p,q}(\mathbb{R}^{2d})$ then $T\in\mathcal{M}^{q,p}$.
\end{theorem}
\begin{proof}
    Denote by $\Tilde{\mathcal{M}}^{p,q}$ the space of operators with Weyl symbol in $M^{p,q}(\mathbb{R}^{2d})$ with corresponding norm. Clearly in the case $q=p$, the spaces $\mathcal{M}^p$ and $\Tilde{\mathcal{M}}^p$ coincide, and hence $\mathcal{M}^p \hookrightarrow \Tilde{\mathcal{M}}^p$ is a continuous inclusion. In the case $p=1$, the inclusion $\mathcal{M}^{1,q} \hookrightarrow \Tilde{\mathcal{M}}^{1,q}$ is a continuous inclusion by \cref{weylsymbincla}. We recall that $\mathcal{M}^{p,q}$ spaces can be defined as a (vector--valued) Wiener-amalgam space as in \cref{wieneramalg}, and that for the vector--valued Lesbegue spaces $L^q(\mathbb{R}^d;M^p(\mathbb{R}^d))$, we have the same complex interpolation spaces as the scalar case (Theorem 5.1.2, \cite{BeLo76}). It follows then from the same argument as the classical Wiener amalgam case \cite{Fe81} that the (complex) interpolation spaces with $\theta = (1-p)(1-\tfrac{1}{q})^{-1}$, we have
    \begin{align*}
        \mathcal{M}^{q,p} &= [\mathcal{M}^{q,1},\mathcal{M}^{q,q}]_{\theta}.
    \end{align*}
    Using the classical interpolation of function modulation spaces on the Weyl symbols, we also have
    \begin{align*}
        \Tilde{\mathcal{M}}^{p,q} &= [\Tilde{\mathcal{M}}^{1,q},\Tilde{\mathcal{M}}^{q,q}]_{\theta}
    \end{align*}
    Hence the inclusion result follows by interpolation for $1\leq p \leq q$. The converse argument for $q\leq p$ follows in the same manner now using \cref{weylsymbinclb}.
    
\end{proof}
Combining this with \cref{psumming} gives:
\begin{corollary}
    Given $p\leq q$, if $\sigma_T \in M^{q,p}(\mathbb{R}^{2d})$ then $T \in \Pi^q(M^{p^\prime};M^q)$.
\end{corollary}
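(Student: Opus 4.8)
The plan is to obtain this corollary by chaining the converse half of the immediately preceding theorem with \cref{psumming}; there is no new computation to perform, and the entire content lies in matching the exponents correctly, since the theorem deliberately swaps the roles of the two indices when passing between the symbol space $M^{\bullet,\bullet}(\mathbb{R}^{2d})$ and the operator space $\mathcal{M}^{\bullet,\bullet}$.

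First I would apply the converse direction of the preceding theorem. Relabelling its exponents to avoid a clash with the corollary's variables, that statement says: whenever the smaller exponent does not exceed the larger, a symbol in $M^{\text{larger},\text{smaller}}(\mathbb{R}^{2d})$ places the operator in $\mathcal{M}^{\text{smaller},\text{larger}}$. In the corollary we are given $p\leq q$ together with $\sigma_T\in M^{q,p}(\mathbb{R}^{2d})$, so $q$ plays the role of the larger exponent and $p$ that of the smaller; identifying the theorem's larger exponent with $q$ and its smaller exponent with $p$ matches the hypothesis exactly, and the ordering constraint $q\le p$ of the theorem becomes precisely the corollary's assumption $p\le q$. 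The theorem therefore yields $T\in\mathcal{M}^{p,q}$.

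With $T$ now located in the operator modulation space $\mathcal{M}^{p,q}$, I would invoke \cref{psumming}, which furnishes the continuous inclusion $\mathcal{M}^{p,q}\subset\Pi^q(M^{p'};M^q)$ for $1\leq p,q<\infty$. Composing the two inclusions gives $T\in\Pi^q(M^{p'};M^q)$, as claimed. I do not expect any genuine obstacle here: both ingredients are already proved, and the argument is a two-step composition. The only point requiring care is the index bookkeeping above, along with the tacit observation that the hypothesis forces $p,q<\infty$ (the exponent $q$ is finite because $q$-summing operators are defined only for finite $q$, and then $p\le q<\infty$ as well), so that the range restriction in \cref{psumming} is automatically satisfied.
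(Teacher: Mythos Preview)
Your proposal is correct and follows exactly the paper's own argument: the paper simply says ``Combining this with \cref{psumming} gives'' the corollary, which is precisely the two-step composition you spell out. Your careful index bookkeeping and the observation about finiteness of $p,q$ are accurate and make explicit what the paper leaves implicit.
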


In \cite{cordero03}, the authors make use of the above relations by identifying a (pure) localisation operator as the convolution of a mask distribution with a Wigner distribution, which is of course precisely the Weyl symbol of a rank-one operator. Since the space $\mathcal{M}^1$ is precisely the space of operators with Weyl symbols in $M^1(\mathbb{R}^{2d})$, the operator-function convolutions allow us to generalise the classification to mixed-state localisation operators:
\begin{theorem}
    For any $S\in\mathcal{M}^1$, and $a\in M^{\infty}(\mathbb{R}^{2d})$, the mixed-state localisation operator 
    \begin{align*}
        A = a \star S
    \end{align*}
    is bounded on all $M^{p,q}(\mathbb{R}^d)$ spaces.
\end{theorem}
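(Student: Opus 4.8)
The plan is to reduce the boundedness of $A$ on every $M^{p,q}(\mathbb{R}^d)$ to the classical boundedness of a Weyl operator whose symbol lies in the Sjöstrand class $M^{\infty,1}(\mathbb{R}^{2d})$ (cf. \cite{gro04}, \cite{grochenigtfa}). This generalises the rank-one localisation operator result of \cite{cordero03}, in which the window $g\otimes g$ produces the Weyl symbol $a * W(g,g)$; here the rank-one window is replaced by the Feichtinger operator $S$.

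First I would compute the Weyl symbol of $A$. Since $A = a \star S = \int_{\mathbb{R}^{2d}} a(z)\,\alpha_z(S)\,dz$ and $\sigma_{\alpha_z(S)} = T_z\sigma_S$, linearity of the Weyl correspondence gives $\sigma_A = a * \sigma_S$, the ordinary convolution on $\mathbb{R}^{2d}$. Because $a \in M^\infty = (M^1)'$ is only a distribution, this identity must be read weakly: the duality $\mathcal{M}^1 \subset \mathcal{S}^1$ paired against $M^\infty$ is what makes $A$ a well-defined operator and the convolution meaningful. I would confirm the symbol identity independently through the Fourier-Wigner calculus, using $\mathcal{F}_W(a\star S) = \mathcal{F}_\Omega(a)\cdot\mathcal{F}_W(S)$ together with $\sigma_S = \mathcal{F}_\Omega(\eta_S)$ and the interchange of products and convolutions under $\mathcal{F}_\Omega$, which again returns $a * \sigma_S$.

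Next I would establish that $\sigma_A \in M^{\infty,1}(\mathbb{R}^{2d})$. By \cref{feichopequiv}, membership $S \in \mathcal{M}^1$ is equivalent to $\sigma_S \in M^1(\mathbb{R}^{2d})$, so it suffices to invoke the convolution relation $M^{\infty} * M^{1} \hookrightarrow M^{\infty,1}$ for modulation spaces. This follows from the Young-type inequalities for $M^{p_1,q_1} * M^{p_2,q_2}\hookrightarrow M^{p,q}$: the first (spatial) indices combine by Young's rule $\tfrac1\infty + \tfrac11 = 1 + \tfrac1\infty$, forcing the first index of the product to be $\infty$, while the second (frequency) indices combine by Hölder $\tfrac1\infty + \tfrac11 = \tfrac11$, forcing the second index to be $1$. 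Applying the boundedness theorem for Weyl operators with symbols in $M^{\infty,1}$ then yields that $A = L_{\sigma_A}$ is bounded on every $M^{p,q}(\mathbb{R}^d)$.

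The main obstacle is the careful handling of the two independent index conditions in the modulation-space convolution relation, together with the distributional justification of each step: since $a$ lies only in $M^\infty$, one must verify that $a \star S$ is genuinely a bounded operator and that the symbol identity $\sigma_A = a*\sigma_S$ persists by density and duality from the case of regular symbols. Once these points are secured, the three-step chain (symbol identity $\Rightarrow$ Sjöstrand regularity $\Rightarrow$ boundedness of Sjöstrand-class Weyl operators) closes the proof.
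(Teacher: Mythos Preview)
Your proposal is correct and follows exactly the same route as the paper: compute $\sigma_A = a * \sigma_S$, use $M^{\infty} * M^{1} \hookrightarrow M^{\infty,1}$, and invoke the Sj\"ostrand-class boundedness theorem. The paper's proof is just a one-line sketch of this argument, whereas you have supplied the details (symbol identity via $\sigma_{\alpha_z S}=T_z\sigma_S$, the Young/H\"older index arithmetic, and the distributional justification).
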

\begin{proof}
    This follows from the convolution relations for modulation spaces and the celebrated result of \cite{groch06} that operators with Weyl symbol in $M^{\infty,1}(\mathbb{R}^{2d})$ are bounded on all $M^{p,q}(\mathbb{R}^d)$ spaces.
    
\end{proof}
Let us close this section with a remark concerning the relation between the coorbit spaces of operators introduced in \cite{dorf22} and the $\mathcal{M}^{p,q}$-spaces introduced in this work. 
 Recalling that the operator STFT \cite{dorf22} satisfies the identity
\begin{align*}
    Q_S T(w,z) = e^{i\pi w_1\cdot w_2}\mathcal{F}_W(\mathfrak{V}_S T(z) )(-w),
\end{align*}
the spaces $\mathfrak{M}^p$ introduced in that work are the $\mathcal{M}^{2,q}$ spaces. This can be seen by recalling that the Fourier-Wigner transform is an isometric isomorphism from Hilbert-Schmidt operators to $L^2(\mathbb{R}^{2d})$.

\bibliographystyle{abbrv}
\bibliography{refs}

\Addresses

\end{document}